\newtheorem{theorem}{Theorem}[section]
\newtheorem{lemma}[theorem]{Lemma}
\newtheorem{proposition}[theorem]{Proposition}
\theoremstyle{definition}
\newtheorem{definition}[theorem]{Definition}
\theoremstyle{remark}
\newtheorem{remark}[theorem]{Remark}
\numberwithin{equation}{section}
\def\1{\raisebox{2pt}{\rm{$\chi$}}}
\def\a{{\bf a}}
\def\b{{\bf b}}
\def\z{{\bf z}}
\def\R{\mathbb{R}}
\def\NN{\mathbb{N}}
\def\div{\mbox{div}\, }
\renewcommand{\L}{\mathcal{L}}
\renewcommand{\H}{{\mathcal H}}
\newcommand{\res}               {\!\!\mathop{\hbox{
                                \vrule height 7pt width .5pt depth 0pt
                                \vrule height .5pt width 6pt depth 0pt}}
                                \nolimits}
\begin{document}

\title[Pattern formation in a flux limited equation of porous media type]{Pattern formation in a flux limited reaction-diffusion equation of porous media type}

\author{J. Calvo}
\address{Departamento de Tecnolog\' ia, Universitat Pompeu-Fabra. Barcelona, SPAIN}
\email{juan.calvo@upf.edu}
\thanks{J. Calvo, O. S\'anchez and J. Soler were supported in part by MICINN (Spain), project
 MTM2011-23384, and Junta de Andaluc\'{\i}a Project  P08-FQM-4267. J. Calvo is also partially supported by a Juan de la Cierva grant of the Spanish MEC.}

\author{J. Campos}
\address{Departamento de Matem\'atica Aplicada,
Facultad de Ciencias, Universidad de Granada. 18071 Granada, SPAIN}
\email{campos@ugr.es}
\thanks{J. Campos was supported in part by MICINN (Spain), project MTM2011-23652.}

\author{V. Caselles}
\address{Departamento de Tecnolog\' ia, Universitat Pompeu-Fabra. Barcelona, SPAIN}
\email{vicent.caselles@upf.edu}
\thanks{V. Caselles  was supported in part by MICINN (Spain), project MTM2009-08171, and
 also acknowledges the partial
support by GRC reference 2009 SGR 773, and by ''ICREA Acad\`emia'' prize for excellence in research funded both
by the Generalitat de Catalunya.}

\author{O. S\'anchez}
\address{Departamento de Matem\'atica Aplicada,
Facultad de Ciencias, Universidad de Granada. 18071 Granada, SPAIN}
\email{ossanche@ugr.es}

\author{J. Soler}
\address{Departamento de Matem\'atica Aplicada,
Facultad de Ciencias, Universidad de Granada. 18071 Granada, SPAIN}
\email{jsoler@ugr.es}

\subjclass[2000]{Primary 35K57, 35B36, 35K67, 34Cxx, 70Kxx; Secondary 35B60, 37Dxx, 76B15, 35Q35, 37D50, 35Q99}

\keywords{Flux limitation, Porous media equations,
Relativistic heat equation, Pattern formation,
Traveling waves,
Nonlinear reaction-diffusion equations,
Optimal mass transportation,
Entropy solutions,
Complex systems}

\begin{abstract}
A nonlinear PDE featuring
flux limitation effects together with those
of the porous media equation (nonlinear Fokker--Planck) is presented in
this paper. We analyze the balance of such diverse effects through the
study of the existence and qualitative behavior of some admissible patterns, namely traveling wave solutions, to this singular reaction-diffusion
equation. We show the existence and qualitative behavior of different types of traveling waves:
classical profiles  for wave speeds high enough,
and discontinuous waves that are reminiscent of hyperbolic shock waves
when the wave speed lowers below a certain threshold.
Some of these solutions are of particular relevance
as they provide models by which the whole solution (and not just the bulk
of it, as it is the case with classical traveling waves) spreads through the medium
with
finite speed.
\end{abstract}

\maketitle

\section{Introduction, entropy solutions, main results}

Reaction--diffusion  equations assume that the behavior of the various populations described is ruled essentially by two processes: local reactions, in which the populations
interact between themselves, and diffusion, which makes the populations spread
out in the physical space. The concept of population is understood here
quite loosely, and several important examples can be found in developmental  biology, ecology,
geology, combustion theory, physics or computer sciences. Particles, free surface water waves, flames, cells, {bacteria} or morphogen concentrations in chemical
processes may qualify as such, see for instance \cite{AW2,Co,GG,KM,MS}.  Reaction--diffusion equations cons\-ti\-tute a usual description for complex systems in all these areas. The prototypical model in this context can be written down as
\begin{equation}
\label{plantilla}
\frac{\partial u}{\partial t} = \div \left(D \nabla u \right) + F(u), \quad u(t=0,x)=u_0(x).
\end{equation}
 Here $D$ is a coefficient that  could be a constant (in the simplest case of linear diffusion) \cite{F,KPP,Mein-Shel,Murray}, a function depending on the domain of definition \cite{BH1,BHN,BHN1}, a function depending on $u$ \cite{HR,SG,VA06}, or in general a function $D= D(u, \nabla u)$ \cite{Enguica, Rosenau2}, which includes the possibility of fractional diffusion associated with Levy processes \cite{Caff1,PQRV2}. The function $F$ represents the reaction term. The different models expressed in equation \eqref{plantilla} have been the object of a intense study in the literature in order to clarify : 1) the qualitative differences when we consider  non-linear diffusion operators like the p-Laplacian or the one of the porous media equation in contrast with the behavior associated with a linear diffusion term
 , 2) what kind of particular solutions (as traveling waves, kinks, or solitons, for instance) can be obtained when different functional forms for $F(u)$ are proposed
 , 3) the behavior of systems of equations of type \eqref{plantilla}, or even more complicated instances of them -- this is a way to describe pattern formation and cooperative behavior, see \cite{GG} for instance, 4) the effect of noise on front propagation, see for example \cite{Mu}, 5) the stability or long time asymptotic properties of the patterns, see \cite{BHM,FM,Jo}, for instance.

Our research in this paper falls into the first and second categories above. We analyze the existence of traveling wave solutions associated
to a nonlinear diffusion PDE coupled
to a reaction term of Fisher--Kolmogorov--Petrovskii--Piskunov (FKPP) type \cite{F,KPP}, namely
\begin{equation}\label{modelo1}
\begin{array}{ll}
\displaystyle
\frac{\partial u}{\partial t} = \nu \, \div_x \left(\frac{u^m \nabla u}{\sqrt{|u|^2 + \frac{\nu^2}{c^2}|\nabla u|^2}} \right) + F(u),\hspace{0.3cm} & {\rm in}
\hspace{0.2cm} Q_T=]0,T[\times \R^N,
\end{array}
\end{equation}
where $m > 1$ and $F(u)$ is a Lipschitz continuous function such that $F(0)=F(1)=0$.
 Here $\nu$ is a kinematic viscosity and $c > 0$ is  a characteristic speed \cite{Rosenau2}. Note that \eqref{modelo1} is a renormalization with respect to the carrying capacity $v_0$ of the equation
 \begin{equation}
 \label{carr}
\frac{\partial u}{\partial t} = \nu v_0 \, \div_x \left(\frac{\left(\frac{u}{v_0}\right)^m \nabla_x u}{\sqrt{|u|^2 + \frac{\nu^2}{c^2}|\nabla u|^2}} \right) +  F \left(\frac{u}{v_0}\right).
 \end{equation}
The reaction term in the FKPP case would be given by $F= K v_0 u \left(1-u\right)$, where $K$ is the growth rate.
{Although the results and techniques introduced in this paper  can be extended to more general cases, we focus our attention
on the FKPP case to deal with concrete numerical examples.}

 Equation (\ref{modelo1}) belongs to the class of flux-limited diffusion equations.
Flux limited diffusion ideas were introduced by Rosenau in \cite{CKR, Rosenau2} in order to restore the finite speed of propagation of
signals in a medium. This property is lost in the classical transport theory that
predicts the nonphysical divergence of the flux with the gradient, as it happens also with the
classical theory of heat conduction (based in Fourier's law) and with the linear diffusion theory
(based in Fick's law). Besides Rosenau's
derivation \cite{Rosenau2}, the particular case of (\ref{modelo1}) where $m=1$ was also formally derived by
Brenier by means of Monge--Kantorovich's mass
transport theory in \cite{Brenier1} (this has been done later in a rigorous way in \cite{Mc-Pu}), where he named it as the relativistic heat equation.
 More recently, (\ref{modelo1}) has been shown ($m=1$) to be an effective model
to describe the transport of morphogens in cellular communication to induce distinct
cell fates in a concentration-dependent manner \cite{VGRaS}. 

The model (\ref{modelo1}) (with $F=0$) was introduced in \cite{Rosenau2} (when $m=3/2$) as an example of 
flux limited diffusion equation in the context of heat diffusion in a neutral gas. As shown in \cite{Rosenau2} the acoustic speed 
is a function of the temperature and the front is convected nonlinearly. This has been shown mathematically in
\cite{leysalto} and it has been proved in \cite{PMnew} that solutions of (\ref{modelo1}) converge to solutions of 
the classical porous medium equation as $c\to\infty$. Thus, this model offers a novel dynamical behavior to describe
diffusion and propagation phenomena in real media. The finite propagation property 
is at the basis of this behavior.

Other flux-limited versions of the porous medium equation, namely 
\begin{equation}\label{NPME}
u_t =\nu \mathrm{div}\left(\frac{u\nabla u^m}{\sqrt{1+\frac{\nu^2}{c^2}\vert \nabla u^m\vert}}\right), \qquad \hbox{\rm $m > 0$,}
\end{equation}
were introduced in \cite{Rosenau2,CKR} and further studied in
\cite{PMnewE,PMnew} (where convergence to the classical porous medium equation is proved). They can be also derived using transport theory as proposed in \cite{Brenier1} (see also \cite{PMnew}).
In this case, the acoustic speed is the constant $c$ \cite{PMnewE} and thus independent of $u$.
The different behavior between both types of models is not yet fully understood, but numerical evidence
\cite{CKR,ACMSV,Marquina} shows that model (\ref{modelo1}) may have  a richer behavior creating discontinuity fronts
starting from smooth initial conditions, while the model (\ref{NPME}) exhibits a behavior more similar to the
corresponding standard porous medium equation \cite{CKR}.
Based on this fruitful dynamical behavior, our purpose here is to concentrate on the study of (\ref{modelo1}) leaving
the study of traveling waves for model (\ref{NPME})  for future research.

The dynamics produced by the combined effects of flux limitation and the nonlinearities of
porous media type in (\ref{modelo1}) ($F=0$) 
(nonlinear Fokker--Planck) may be relevant for its potential applicability in the study of  other similar operators.
From a mathematical point of view, this combination
demands the use of different techniques and ideas coming from the fields of nonlinear diffusion
(nonlinear semigroups) and scalar conservation laws, e.g. front propagation and entropy solutions.
This concept of entropy solution determines the geometrical features of the admissible solutions  because
the structure of the singularities that a solution may eventually display is strongly restricted, by virtue of a series of constraints that are ultimately related with the physical principle stating that solutions cannot violate causality. To be more precise, jump discontinuities are characterized by having a vertical profile that moves according to a  Rankine--Hugoniot law.

As we will show, the reaction-diffusion equation (\ref{modelo1})  exhibits new
properties with respect to the classical reaction terms coupled with linear diffusion mechanisms.
The existence of  singular traveling waves is one of these new properties, and it is object of study in this paper.
The construction of such singular patterns requires the development of
novel arguments in dynamical systems. These involve the use of invariant manifolds and blow-up control to analyze the singular phase diagrams associated to the ODE satisfied by traveling wave solutions of
(\ref{modelo1}).
In fact, for some choices of the physical constants the classical theory breaks down and we need to use the concept of entropy solution in the dynamical system context and the
Rankine--Hugoniot jump condition to construct our profiles,
producing discontinuous traveling waves (which, for some particular speed values, may have their support in a half line).
This behavior is reminiscent of shock waves in hyperbolic conservation laws.
Our analysis gives further insight into the properties of the solutions of (\ref{modelo1}) that were experimentally studied in \cite{CKR,ACMSV,Marquina} when $F=0$, in particular on the existence of solutions which are discontinuous  in the interior of their support.

Let us recall that there are several instances of traveling wave solutions not supported in the whole line arising in models with non-linear diffusion mechanisms coupled with reaction terms, among which we mention \cite{aronson1980density,Enguica,Newman80,Sagan,PRE,rosenau2002reaction,SG,SG2,SG3}.
This is an issue of great relevance in several contexts where an infinite speed of propagation of the support
is inconsistent with the experimental observations.
The traveling waves supported on a half line that are constructed in the previous references are all continuous functions. To our knowledge, only the results in \cite{kurganov1997effects,kurganov1998burgers,CGSS} and the ones in this paper are able to produce traveling waves that are not only supported on a half line but also exhibit sharp discontinuity fronts.  The papers \cite{kurganov1997effects,kurganov1998burgers} study traveling waves for models including directly hyperbolic terms of Burger's type
coupled with a diffusion operator of curvature type $\left(\frac{u_x}{\sqrt{1+|u_x|^2}}\right)_x$ and no reaction term. They exhibit the existence of a critical regime above which discontinuous transitions in the traveling wave show up.  The research carried in \cite{Ros-Non} is also related to these issues, as discontinuous steady states,  which are not traveling waves, supported in a half line are obtained out of a reaction-diffusion equation whose diffusion mechanism is very similar to that in \eqref{modelo1},  but avoiding the singularity at $u=0$ (say of curvature type); however the reaction term is not of FKPP type, using cubic and quintic nonlinearities, and the techniques based on numerical and asymptotic methods are different.
Note that the cubic  nonlinearity of the Allen--Cahn term produces a bistability effect, which helps in the study of the unique associated traveling wave. The case of the reaction FKPP term is different as regards the stability of traveling waves and their uniqueness.
Studying these phenomena may open new perspectives of application of these  models to biology or traffic flow frameworks, for instance.

%%%%%%%%%%%%%%%%%

Let us now  introduce our assumptions
on the reaction term $F$.

\subsection{Assumptions on the reaction term}

We will be concerned with the analysis of traveling wave solutions to a family of one-dimensional non-linear flux limited diffusion equations coupled with a reaction term of FKPP type.
Concretely, we are interested in the non-linear diffusion equation which can be written down as (\ref{modelo1}), with $m> 1$. The analysis of such models with $F \equiv 0$ was the object of \cite{ACMMRelat,ACMSV}.
We assume that $F(u)$ satisfies the following properties:
\begin{itemize}
\item $F \in \mathcal{C}^1 ([0,1])$,  $F(0)=F(1)=0$, and $F(u)>0 $ for every $u \in ]0,1[$.
\item $F'(1)<0$.
\end{itemize}
Note that we can write $F(u)=u K(u)$ with
\begin{itemize}
\item $ K\in\mathcal{C}^1(]0,1]) \cap \mathcal{C}^0([0,1])$.
\item $K(1) = 0$ and  $K(u) > 0, \quad \forall u \in ]0,1[$.
\item  $K(0)= F'(0) \geq 0$, $K'(1)= F'(1) <0$.
\end{itemize}
This allows for traveling fronts that connect  the constant state $u=1$ (which, before normalization, would correspond to the state $u=v_0$, see \eqref{carr}) with the zero state. This can be justified by
the comparison principle given in Theorem \ref{UniqSup}, which ensures that we can restrict ourselves to the study of solutions between these two constant states.
We suggest the reader to keep in mind the prototypical case $F(u) = u^p(1-u^q)$, where $p, q\ge 1$ (see \cite{Murray} and references therein for applications). The conditions on function $F$ classify it as a ``Type A'' reaction function according with  the characterization of \cite{BN}.
%%%%%%%%%%%%%%%%

The next thing we do is to analyze the structure of discontinuous solutions to \eqref{modelo1}. This will make clear what kind of traveling fronts are to be expected.

\subsection{Entropy solutions and the Rankine--Hugoniot condition}\label{sect:analysisEC}

Eq. (\ref{modelo1}) is a particular instance of the class of flux limited  diffusion equations for which the
correct concept of solution, allowing to prove existence and uniqueness results, is the notion of entropy solution \cite{ACMMRelat,CMSV,CEU2}. Although somewhat involved, this notion is necessary since (\ref{modelo1}) (as many other flux limited diffusion equations) has a parabolic-hyperbolic behavior, with solutions that may exhibit moving discontinuity fronts \cite{ACMSV,CKR}. In particular, we notice in passing that the right function space to study this class of solutions is the space of functions of bounded variation.

As usual, the notion of entropy solution of (\ref{modelo1})
is described in terms of a set of inequalities of Kruzhkov type \cite{Kruzhkov} that are well adapted to
prove uniqueness results. But, as proved in \cite{leysalto}   for  $F = 0$, we can give a geometric characterization of
entropy conditions on the jump set of solutions of (\ref{modelo1}). Indeed, in their jump set, entropy solutions of (\ref{modelo1})
have a vertical graph and this is equivalent to the entropy inequalities there. This permits also to give an explicit form to
Rankine--Hugoniot condition that expresses the velocity of moving discontinuity fronts  \cite{leysalto}.
Both things, the geometric characterization of entropy solutions and the Rankine--Hugoniot condition, are relevant for us here.
Indeed, they will guide us in the search for traveling waves of (\ref{modelo1}),
after reducing it to the study of an associated dynamical system (see Section \ref{lados}). Thus, our approach is based on
the analysis of that system, taking into account the properties of entropy solutions of (\ref{modelo1}).

Let us briefly recall both the Rankine--Hugoniot condition and the geometric characterization of entropy solutions of
(\ref{modelo1}) in a context that is suitable for our purposes here.
Since we follow the presentation in \cite{leysalto} we will skip the proofs of the given statements.
For continuity of the presentation, the notation and basic background on the functional setting,
the definition of entropy solutions and basic existence and uniqueness results for
(\ref{modelo1}) are given in the appendix in Section \ref{preliminaris} (see also \cite{Ambrosio}).
Although the case we are interested in here corresponds to $N=1$, let us write them in the general case $N\geq 1$.

Let $Q_T =]0,T[\times \R^N$. Assume that $u \in BV_{\rm loc} (Q_T)$.
Let us denote by $J_u$ the jump set of $u$ as a function of $(t,x)$. For any $t > 0$, we denote by
$J_{u(t)}$ the jump set of $u(t) \in BV_{\rm loc} (\R^N)$.
Let $\nu :=\nu_u =(\nu_t,\nu_x)$ be the unit normal to the
jump set of $u$ so that the jump part of the distributional derivative reads $D^j_{t,x} u = [u] \nu  \H^{N}\vert_{J_u}$, where $ \H^{N}$ is the $N$-dimensional Hausdorff measure in $\R^N$.
We denote by $\nu^{J_{u(t)}}$ the unit normal to the jump set
of  $u(t)$ so that $D_x^ju(t)= [u(t)]\nu^{J_{u(t)}}\H^{N-1}\vert_{J_{u(t)}}$.
Here  $[u](t,x):=u^+(t,x)-u^-(t,x)$ denotes the jump of $u$ at
$(t,x)\in J_u$ and $[u(t)](x):=u(t)^+(x)-u(t)^-(x)$ denotes the jump of $u(t)$ at the point $x\in J_{u(t)}$.

Let us recall the definition of the
speed of the discontinuity set of $u$ \cite{leysalto}.

\begin{definition}\label{def:speed}
Let $u\in BV_{\rm loc}(Q_T)$, $F\in L^1_{\rm loc}(\R^N)$, and let $\z\in L^\infty([0,T]\times\R^N,\R^N)$ be such that
$u_t = \mathrm{div}\, \z + F$ in $\mathcal{D}^\prime(Q_T)$.  We define the speed of the discontinuity set of $u$ as
$v(t,x) = \frac{\nu_t(t,x)}{|\nu_x(t,x)|}$ $\H^N$-a.e. on $J_u$.
\end{definition}

This definition has a sense since, when $u\in BV_{\rm loc}(Q_T)$, $F\in L^1_{\rm loc}(\R^N)$, $\z\in L^\infty([0,T]\times\R^N,\R^N)$ and
$u_t = \mathrm{div}\, \z + F$ in $\mathcal{D}^\prime(Q_T)$, we have (see \cite{leysalto}, Lemma 6.4) that
$$
\H^N\{(t,x)\in J_u: \nu_x(t,x)=0\}=0.
$$

In our next result we state the Rankine--Hugoniot condition in a context that covers the case of equation (\ref{modelo1}).
The proof follows as in  \cite{leysalto} and we omit it.

\begin{proposition}\label{prop:RH1}
Assume that $F:\R\to\R$ is Lipschitz.
Let $u\in BV_{\rm loc}(]0,T[\times \R^N)$ and let $\z\in L^\infty([0,T]\times\R^N,\R^N)$ be such that
$u_t = \mathrm{div}\, \z + F(u)$.
For $\mathcal{L}^1$ almost any $t> 0$ we have
\begin{equation}\label{fla:rhf1}
[u(t)](x) v(t,x)   = [[\z\cdot\nu^{J_{u(t)}}]]_{+-} \qquad \hbox{\rm $\H^{N-1}$-a.e. in $J_{u(t)}$,}
\end{equation}
where $[[\z\cdot\nu^{J_{u(t)}}]]_{+-}$ denotes the difference of traces from both sides of $J_{u(t)}$.
\end{proposition}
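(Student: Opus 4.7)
The plan is to adapt the proof of the corresponding statement with $F=0$ given in \cite{leysalto}, checking that adding a Lipschitz reaction source contributes nothing to the jump balance. Since $F$ is Lipschitz and $u\in BV_{\rm loc}(Q_T)\subset L^1_{\rm loc}(Q_T)$, the composition $F(u)$ lies in $L^1_{\rm loc}(Q_T)$ and, viewed as a Radon measure on $Q_T$, is absolutely continuous with respect to $\mathcal{L}^{N+1}$, hence places no mass on the $\mathcal{L}^{N+1}$-null set $J_u$. Rewriting the PDE as $\mathrm{div}_{(t,x)}(u,-\z) = F(u)$ exhibits the bounded space--time vector field $W := (u,-\z)\in L^\infty(Q_T;\R^{N+1})$ as one with $L^1_{\rm loc}$ divergence.

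Next I would invoke the Anzellotti-type normal trace theory for such vector fields: on every oriented rectifiable hypersurface $S\subset Q_T$ where the measure-theoretic normal exists, $W$ admits one-sided normal traces whose difference equals the mass of $\mathrm{div}\,W$ concentrated on $S$. Applied to $S = J_u$ with unit normal $\nu = (\nu_t,\nu_x)$ coming from the $BV$ structure of $u$, the one-sided normal traces must coincide $\H^N$-a.e.\ on $J_u$ since the divergence of $W$ is $\mathcal{L}^{N+1}$-absolutely continuous. This yields
\begin{equation*}
u^+\nu_t - \z^+\cdot\nu_x \;=\; u^-\nu_t - \z^-\cdot\nu_x \qquad \H^N\text{-a.e.\ on } J_u.
\end{equation*}
Rearranging gives $[u]\,\nu_t = [[\z\cdot\nu_x]]_{+-}$. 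Dividing by $|\nu_x|$, which is strictly positive $\H^N$-a.e.\ on $J_u$ by the lemma from \cite{leysalto} quoted above, and recognizing $\nu_t/|\nu_x| = v(t,x)$ and $\nu_x/|\nu_x| = \nu^{J_{u(t)}}$ on each time slice, I obtain $[u]\,v = [[\z\cdot\nu^{J_{u(t)}}]]_{+-}$. A final $BV$ slicing argument (see \cite{Ambrosio}) identifies, for $\mathcal{L}^1$-a.e.\ $t\in (0,T)$, the time fiber of $J_u$ with $J_{u(t)}$ and disintegrates $\H^N\res J_u$ essentially as $\H^{N-1}\res J_{u(t)}\otimes \mathcal{L}^1$, delivering the stated identity on $J_{u(t)}$ for a.e.\ $t$.

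The main technical obstacle is the rigorous definition and computation of one-sided normal traces of the merely $L^\infty$ vector field $\z$ across the moving discontinuity surface, together with the commutation between the ambient jump set $J_u$ and its time fibers $J_{u(t)}$. Both ingredients are already developed in \cite{leysalto} for the autonomous case $F=0$ and carry over here without change; the only genuinely new observation required is that a Lipschitz composition with a $BV$ function contributes to the source an $\mathcal{L}^{N+1}$-absolutely continuous term, which by construction never interferes with the singular balance supported on $J_u$.
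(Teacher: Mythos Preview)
Your proposal is correct and matches the paper's approach exactly: the paper simply states that ``the proof follows as in \cite{leysalto} and we omit it,'' and you have spelled out precisely that adaptation, correctly identifying that the only new ingredient is the observation that $F(u)\in L^1_{\rm loc}(Q_T)$ is $\mathcal{L}^{N+1}$-absolutely continuous and hence contributes nothing on $J_u$. One minor caveat: writing $W=(u,-\z)\in L^\infty(Q_T;\R^{N+1})$ implicitly assumes $u\in L^\infty$, which is not stated in the proposition but holds in all the paper's applications; alternatively, the trace of the $u$-component on $J_u$ is already furnished directly by the $BV$ structure, so this is harmless.
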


We call outer side of $J_{u(t)}$ the one to which
$\nu^{J_{u(t)}}$ is pointing. Thus, the outer trace is $u(t)=u(t)^+$.
Notice that with this notation, the Rankine--Hugoniot condition \eqref{fla:rhf1} is expressed in an
invariant way.
We have denoted as $[\z\cdot\nu^{J_{u(t)}}]$ the weak trace of the normal component of $\z$ on $J_{u(t)}$.
This notion is well defined since $\z$ is a bounded vector field whose divergence is a Radon measure \cite{Anzellotti1,leysalto,ChenFrid1}.
This is covered by the results in \cite{Anzellotti1,ChenFrid1} if $J_{u(t)}$ is locally a Lipschitz surface.
In the present case, we  need the further developments in \cite{leysalto}.

Assume that $m>1$. As in \cite{leysalto}, the notion of entropy solution
of (\ref{modelo1}) (see Section \ref{preliminaris}) can be expressed
as a set of inequalities that can be translated into a geometric condition on the jump set of the solution. Informally, one can say that
jump discontinuities are fronts with a vertical contact angle moving at the speed given by the Rankine--Hugoniot condition. This can be proved
as in \cite{leysalto}.

\begin{proposition}\label{thm:interpEC2}
Let $F:\R\to\R$ be a Lipschitz function.
Let $u \in C([0,T];$ $L^1_{\rm loc}(\R^N))$ $\cap BV_{\rm loc} (]0,T[\times \R^N)$.
Assume that $Du= D^{ac} u + D^j u$, that is, $Du$ has no Cantor part.
Assume that $u_t = \mathrm{div}\, \z + F(u)$ in $\mathcal{D}^\prime(Q_T)$, where $\z=\a(u,\nabla u)$
is the flux of (\ref{modelo1}).
Then  $u$ is an entropy solution of (\ref{modelo1}) if and only if
for $\mathcal{L}^1$-almost any $t > 0$
\begin{equation}\label{eq:vcap}
[\z\cdot \nu^{J_{u(t)}}]_+ =  c (u^+(t))^m  \qquad \hbox{\rm and} \qquad [\z\cdot \nu^{J_{u(t)}}]_- = c(u^-(t))^m
\end{equation}
hold $\H^{N-1}$ a.e. on $J_{u(t)}$.
Moreover, from Proposition  \ref{prop:RH1}, the velocity of the discontinuity fronts is
\begin{equation}\label{eq:speedp}
v = c  \frac{ (u^+(t))^m- (u^-(t))^m}{u^+(t)-u^-(t)}.
\end{equation}
\end{proposition}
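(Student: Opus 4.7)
My plan is to follow closely the strategy developed in \cite{leysalto} for the case $m=1$, $F\equiv 0$, and to adapt it to the present situation where $m>1$ and a Lipschitz reaction term is present. The starting point is the definition of entropy solution recalled in Section \ref{preliminaris}: a family of Kruzhkov-type integral inequalities for the pair $(u,\a(u,\nabla u))$. The reaction term plays only a subordinate role: $F(u)$ is uniformly bounded on $[0,1]$ so it contributes only an $L^\infty$ source to the equation, and when the entropy inequalities are localized near $J_u$ and blown up, the $F$-contribution vanishes because $J_u$ is Lebesgue-null in $Q_T$. The jump-set analysis thus reduces, modulo notational changes, to the one carried out in \cite{leysalto}, with the single important substitution that the saturated modulus of the flux is $c\,u^m$ rather than $c$.

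The crucial geometric observation is the saturation of $\a$: for fixed $u>0$ one has
\[
\a(u,\nabla u)=\nu\,\frac{u^m\nabla u}{\sqrt{u^2+\tfrac{\nu^2}{c^2}|\nabla u|^2}}\;\longrightarrow\; c\,u^m\,\frac{\nabla u}{|\nabla u|}\qquad\text{as }|\nabla u|\to\infty,
\]
together with the universal bound $|\a(u,\nabla u)|\le c\,u^m$. At a point of $J_{u(t)}$ the distributional gradient $Du(t)$ is concentrated along $\nu^{J_{u(t)}}$, so heuristically the gradient is infinite and aligned with $\nu^{J_{u(t)}}$; the formal value of the one-sided weak traces of $\z\cdot\nu^{J_{u(t)}}$ is therefore $c(u^{\pm}(t))^m$, and the goal is to turn this heuristic into an equivalence with entropy admissibility. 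Here $\z\cdot\nu^{J_{u(t)}}$ is rigorously understood as the Anzellotti/Chen--Frid weak trace of a bounded vector field with Radon-measure divergence, using the refinements from \cite{leysalto} that handle the merely rectifiable set $J_{u(t)}$.

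The equivalence is established in both directions. For the forward implication, one plugs into the Kruzhkov-type entropy inequalities test functions supported on one side of $J_{u(t)}$, with truncation levels placed just below $u^+(t)$ and just above $u^-(t)$; integration by parts, combined with the concentration of $Du$ along $\nu^{J_{u(t)}}$, yields the lower bounds $[\z\cdot\nu^{J_{u(t)}}]_+\ge c(u^+(t))^m$ and $[\z\cdot\nu^{J_{u(t)}}]_-\ge c(u^-(t))^m$ $\H^{N-1}$-a.e.\ on $J_{u(t)}$, while the matching upper bounds are immediate from $|\a|\le c\,u^m$. Conversely, if \eqref{eq:vcap} holds, then the distributional equation, combined with the hypothesis that $Du$ has no Cantor part, lets one reverse the computation and verify the Kruzhkov inequalities by integration on each side of $J_{u(t)}$. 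Finally, \eqref{eq:speedp} follows directly from Proposition \ref{prop:RH1} by substituting \eqref{eq:vcap}: $[u(t)]\,v=[[\z\cdot\nu^{J_{u(t)}}]]_{+-}=c\bigl((u^+(t))^m-(u^-(t))^m\bigr)$, and dividing by $[u(t)]=u^+(t)-u^-(t)$.

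The main technical obstacle is not algebraic but analytic: making rigorous the weak trace of $\z$ on the merely rectifiable set $J_{u(t)}$ and justifying the interplay between these traces and the Kruzhkov truncations. This is precisely what is carried out in \cite{leysalto}; the novelty here amounts to bookkeeping, namely tracking the factor $u^m$ through the trace computations and verifying that the Lipschitz reaction $F(u)$ does not disturb the jump-set analysis.
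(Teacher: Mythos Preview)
Your proposal is correct and matches the paper's own treatment: the paper does not give a proof but simply states that the result ``can be proved as in \cite{leysalto}'', which is precisely the strategy you outline, with the bookkeeping adjustment that the saturated flux modulus is $c\,u^m$ and the observation that the Lipschitz term $F(u)$ is harmless in the jump-set analysis. Your identification of the key ingredients --- the bound $|\a(u,\nabla u)|\le c\,u^m$, the weak trace machinery from \cite{Anzellotti1,ChenFrid1,leysalto}, and the derivation of \eqref{eq:speedp} from Proposition~\ref{prop:RH1} --- is exactly what the paper intends.
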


To conclude, let us rephrase the conditions \eqref{eq:vcap} in a more geometric way.
Under some additional assumptions they amount to a vertical profile of $u$ on its jump set.
This is the case if we assume that for $\H^N$ almost all $x\in J_u$ there is a
ball $B_x$ centered at $x$ such that either (a) or (b) hold, where

\begin{itemize}
\item[(a)] $u\vert_{B_x} \geq \alpha > 0$,
\item[(b)] $J_u \cap B_x$ is the graph of a Lipschitz function with $B_x \setminus J_u = B_x^1 \cup B_x^2$,
where $B_x^1,B_x^2$ are open and connected,
$u\geq \alpha > 0$ in $B_x^1$, while the trace of $u$ on $J_u \cap \partial B_x^2$ computed from $B_x^2$ is zero.
\end{itemize}

In both cases, under the assumptions of Proposition  \ref{thm:interpEC2}, by Lemma 5.6 in \cite{leysalto} we can cancel $u^m$ on both sides of
the identities in (\ref{eq:vcap}) and obtain
\begin{equation}\label{verticalA1}
\left[\frac{\nabla u}{\sqrt{u^2+\frac{\nu^2}{c^2} \vert \nabla u\vert^2}}\cdot \nu^{J_{u(t)}}\right]_+ = \frac{c}{\nu}  \qquad \hbox{\rm on $J_u\cap B(x,r)$.}
\end{equation}
If (a) holds we also have
\begin{equation}\label{verticalA2}
\left[\frac{\nabla u}{\sqrt{u^2+\frac{\nu^2}{c^2} \vert \nabla u\vert^2}}\cdot \nu^{J_{u(t)}}\right]_- = \frac{c}{\nu}  \qquad \hbox{\rm on $J_u\cap B(x,r)$.}
\end{equation}
In dimension one, assuming that the jump point is isolated and  that $u$ is smooth out of the discontinuity, the above conditions mean  that the graph of $u$ is vertical at the discontinuity points. The same can be said in any dimension if $J_{u}$ is a regular surface and
$u$ is smooth out of the discontinuity set. In the more general case, the traces in (\ref{verticalA1}), (\ref{verticalA2})
are interpreted in a weak sense \cite{Anzellotti1,leysalto,ChenFrid1}.

We conclude this first section by introducing  the main results of this paper.

\subsection{Statement of the main results}

We assume that $m > 1$.
We look for one-dimensional incoming wave solutions of (\ref{modelo1}) with range in $[0,1]$, traveling at constant speed $\sigma>0$, with their shape being completely unaltered. That is, we look for solutions of the form $u(x-\sigma t)$. In a first step we will study decreasing traveling profiles,
but we will prove that monotonicity is not a real constraint because these are the only piecewise smooth entropy solutions of (\ref{modelo1}) having a traveling wave structure.
Let us make precise that when we say that a function is piecewise smooth, up to a finite number of points, we understand
that at those singular points there is a jump either of the function or of its first derivative.
For this type of solutions our main result  (see Fig. \ref{TWprofiles}) is the following:

\begin{theorem}
\label{parto}
Let $m > 1$. The following results are verified

\begin{itemize}
\item[i)] Existence: There exist two values $0<\sigma_{ent}<\sigma_{smooth}<mc$, depending on $c,\nu, m$ and $F$, such that:
\begin{enumerate}
\item for $\sigma>\sigma_{smooth}$ there exists a unique smooth traveling wave solution of
\eqref{modelo1},
\item for $\sigma=\sigma_{smooth}$ there exists a traveling wave solution of \eqref{modelo1}, which is continuous but not smooth,
\item for $\sigma_{smooth} > \sigma \ge \sigma_{ent}$ there exists a 
 traveling wave solution of \eqref{modelo1}, which is discontinuous.
\end{enumerate}

\item[ii)] Uniqueness:  for any fixed value of $\sigma \in [\sigma_{ent},+\infty[$, after normalization (modulo spatial translations) there is just one traveling wave solution in the class of piecewise smooth solutions (that is, smooth except maybe at a finite number of points)  with range in $[0,1]$ and satisfying the entropy conditions.

\item[iii)]Continuity: Assume that there is a value $p\ge 1$ such that
\begin{equation}\label{palota}
\liminf_{u \to 0}\frac{F(u)}{u^p} = k \in ]0, +\infty].
\end{equation}
After suitable normalization, there is a family of traveling wave solutions $u^{\mathcal N}_{\sigma}$ for $\sigma \in [\sigma_{ent},+\infty[$ which enjoys the following property:
$$
\lim_{\sigma_1 \to \sigma_2} \|u^{\mathcal N}_{\sigma_1}(t)-u^{\mathcal N}_{\sigma_2}(t)Ê\|_{L^p(\R)} +   \|u^{\mathcal N}_{\sigma_1}(t)-u^{\mathcal N}_{\sigma_2}(t)Ê\|_{L^\infty (\R)}= 0,
$$
for any $t \geq 0$ and any $\sigma_1, \sigma_2 \in [\sigma_{ent},+\infty[$.
\end{itemize}
\end{theorem}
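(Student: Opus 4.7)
The plan is to reduce the traveling--wave problem to a two--dimensional autonomous dynamical system and then combine a careful phase--plane analysis with the entropy and Rankine--Hugoniot characterizations provided by Propositions \ref{prop:RH1}--\ref{thm:interpEC2}. Substituting $u(t,x)=u(\xi)$, $\xi=x-\sigma t$, into \eqref{modelo1} and setting $v=u'$ yields a planar autonomous system in $(u,v)$ whose flux $J(u,v)=u^m v/\sqrt{u^2+(\nu/c)^2 v^2}$ stays bounded by $(c/\nu)u^m$ uniformly as $v\to-\infty$. The equilibria are $(0,0)$ and $(1,0)$. The decisive structural feature is that the line $\{v=-\infty\}$ is a singular boundary of the physical region, which I would compactify by a Poincar\'e or blow--up chart, so that classical ODE theory on the interior is complemented on that boundary by the jump rule \eqref{eq:speedp}.

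For part (i) I would argue in three stages. First, linearization shows that $(1,0)$ is a sink for every $\sigma>0$ (since $F'(1)<0$), while $(0,0)$ carries a one--dimensional unstable direction whose slope is controlled by $F'(0)\ge 0$ and $\sigma$; a standard shooting/trapping argument in the strip $\{0\le u\le 1,\,v\le 0\}$ produces, for $\sigma$ sufficiently large, a unique monotone heteroclinic orbit entirely contained in $\{|v|<\infty\}$, hence a smooth profile. Second, I would set
$$\sigma_{smooth}=\inf\{\sigma>0 \ :\ \text{the orbit connecting }(0,0)\text{ and }(1,0)\text{ stays bounded}\};$$
monotone dependence of the vector field on $\sigma$ (through the $-\sigma v$ term) shows that the admissible speeds form an interval $[\sigma_{smooth},+\infty)$ and that at the infimum the orbit hits $v=-\infty$ precisely as $u\to 0^+$, producing the continuous but non--$C^1$ profile. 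Third, for $\sigma<\sigma_{smooth}$ the backward orbit from $(1,0)$ meets $v=-\infty$ at some interior value $u=\alpha(\sigma)\in(0,1)$; I would truncate there and glue to the constant state $u^+=0$. Admissibility requires \eqref{eq:speedp}, which forces $\alpha(\sigma)=(\sigma/c)^{1/(m-1)}$, together with the geometric identities \eqref{verticalA1}--\eqref{verticalA2}, which hold automatically on the $u^-$ side (vertical tangent from the orbit) and trivially on the $u^+=0$ side. The value
$$\sigma_{ent}=\inf\{\sigma\in(0,\sigma_{smooth}] \ :\ \alpha(\sigma)\text{ coincides with the }v=-\infty\text{ intercept of the orbit}\}$$
is then well defined, and the chain $0<\sigma_{ent}<\sigma_{smooth}<mc$ follows by comparison with the linearization at $(0,0)$, with the upper bound $mc$ arising as the characteristic speed carried by the effective hyperbolic flux $f(u)=cu^m$ in the regime $v\to-\infty$.

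For (ii), uniqueness follows because the unstable manifold of $(0,0)$ is one--dimensional: the heteroclinic orbit (possibly truncated) is determined by $\sigma$ up to spatial translation, and monotonicity along it rules out non--monotone piecewise smooth entropy profiles, since any such would require the phase orbit to self--intersect or glue incompatible segments, both excluded by \eqref{eq:vcap} combined with Lemma~5.6 of \cite{leysalto}. For (iii), I would fix the normalization $\mathcal{N}$ so that the jump (or, for $\sigma\ge\sigma_{smooth}$, a chosen level set) sits at $\xi=0$. Continuous dependence of planar orbits on $\sigma$ in the blown--up chart yields locally uniform convergence of the profiles on compact $\xi$--intervals; the explicit formula $\alpha(\sigma)=(\sigma/c)^{1/(m-1)}$ makes the jump height depend continuously on $\sigma$ and shrink to zero as $\sigma\to\sigma_{smooth}^-$, giving the $L^\infty$ estimate. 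For the $L^p$ control, hypothesis \eqref{palota} provides the algebraic or exponential decay rate of $u(\xi)$ as $\xi\to+\infty$ uniformly for $\sigma$ in compact subsets of $[\sigma_{ent},+\infty)$, delivering equi--integrability of the tails.

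The main obstacle I foresee is the truncate--and--glue construction of stage three: showing that the backward orbit from $(1,0)$ reliably reaches the singular boundary at a well--defined $\alpha(\sigma)$, that this value matches the Rankine--Hugoniot prescription exactly, and that the resulting piecewise smooth profile is a bona fide entropy solution of the full PDE rather than just a weak one. This demands combining invariant--manifold analysis in a blown--up chart near $v=-\infty$ with the geometric entropy characterization \eqref{verticalA1}--\eqref{verticalA2}, and identifying $\sigma_{ent}$ as the precise threshold where the matching condition first fails.
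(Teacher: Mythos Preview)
Your overall strategy---reduce to a planar system, use the unstable manifold at the rest state, compactify the singular boundary, and invoke the entropy/Rankine--Hugoniot characterization---is the same as the paper's. However, there are two concrete errors that break the argument.

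First, the roles of the equilibria are reversed. With $F'(1)<0$ the point $(1,0)$ is a \emph{saddle}, not a sink: this is precisely what furnishes the one-dimensional unstable manifold along which the decreasing profile departs from $u=1$ as $\xi\to-\infty$. Correspondingly, the orbit approaches the $u=0$ boundary as $\xi\to+\infty$, so uniqueness comes from the one-dimensional unstable manifold of $(1,0)$, not of $(0,0)$. In the paper's variable $r=-\tfrac{\nu}{c}\,u'/\sqrt{u^2+(\nu/c)^2(u')^2}$ the $\omega$-limit on the $u=0$ side is $(0,r^*)$ with $r^*<1$, which is why smooth orbits exist at all.

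Second, and more seriously, your gluing construction for $\sigma<\sigma_{smooth}$ is not the one that produces the family in item (i)(3). You truncate the orbit where it hits the singular boundary at $u=\alpha(\sigma)$ and glue directly to the zero state. The Rankine--Hugoniot condition \eqref{eq:speedp} with $u^-=0$ then forces $\alpha(\sigma)=(\sigma/c)^{1/(m-1)}$, but $\alpha(\sigma)$ is already determined by the dynamics (it is $u^+(\sigma)$ in the paper's notation), and these two quantities coincide at exactly \emph{one} value of $\sigma$---that is the definition of $\sigma_{ent}$. For every $\sigma\in(\sigma_{ent},\sigma_{smooth})$ your scheme yields no admissible wave, contradicting the statement you are trying to prove. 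What the paper does instead is match the left piece (exiting at $(u^+(\sigma),1)$) to a \emph{second} orbit---a ``Type III'' orbit that enters the diagram at $(u^-(\sigma),1)$ with $0<u^-(\sigma)\le u^*(\sigma)$ and flows down to $(0,r^*)$. The jump is then between two \emph{positive} values $u^+(\sigma)>u^-(\sigma)>0$, both with vertical contact angle, and \eqref{eq:speedp} reads
\[
\sigma=c\,\frac{(u^+(\sigma))^m-(u^-(\sigma))^m}{u^+(\sigma)-u^-(\sigma)}.
\]
One must show this equation determines $u^-(\sigma)$ uniquely given $u^+(\sigma)$, that $u^-(\sigma)\le u^*(\sigma)$ so that a Type III orbit actually exists from that point, and that $\sigma\mapsto u^-(\sigma)$ is continuous and strictly increasing with $u^-(\sigma_{smooth})=u^+(\sigma_{smooth})=u^*(\sigma_{smooth})$ and $u^-(\sigma_{ent})=0$. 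Only after this two-sided matching do the profiles in Figure~\ref{TWprofiles}~C) arise; your construction produces only the half-line supported wave of Figure~\ref{TWprofiles}~D). This missing piece also affects your continuity argument in (iii), since the jump height is $u^+(\sigma)-u^-(\sigma)$, not $\alpha(\sigma)$, and it is the simultaneous convergence $u^\pm(\sigma)\to u^*(\sigma_{smooth})$ that makes the discontinuity close up as $\sigma\uparrow\sigma_{smooth}$.
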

\begin{remark}
Note that condition (\ref{palota}) has been introduced to complement the uniform convergence with a convergence in some $L^p$ space. Other possible assumptions could involve different spaces.
Let us point out that when the condition (\ref{palota})
holds for some $p\ge 1$, then it is also verified for any value of $p$ above this one. When $F$ is analytic, we can take $p$ as the order of the zero of $F$ at $u=0$. In the general case, we may not be able to find a minimal value of $p$ such that (\ref{palota}) is fulfilled, an example of this situation being given by $F(u) = u^2  \log \left(\frac{1}{u}\right)$. In any case, the convergence will be at least uniform, see Section 3.

\end{remark}

From the perspective of applications, the most interesting and novel solutions are those
corresponding to $ \sigma \in [\sigma_{ent},\sigma_{smooth}[$, which are discontinuous.
In particular, those corresponding to $\sigma = \sigma_{ent}$ are supported on a half line for each $t$, and they encode processes in which the propagation of information (whatever it may be) takes place at finite speed.

%%%%%%%%%%%%%%%%%%%
%
%%%%%%%%%%%%%%%%%%%

\begin{figure}[h]
\begin{center}
\includegraphics[width=13cm]{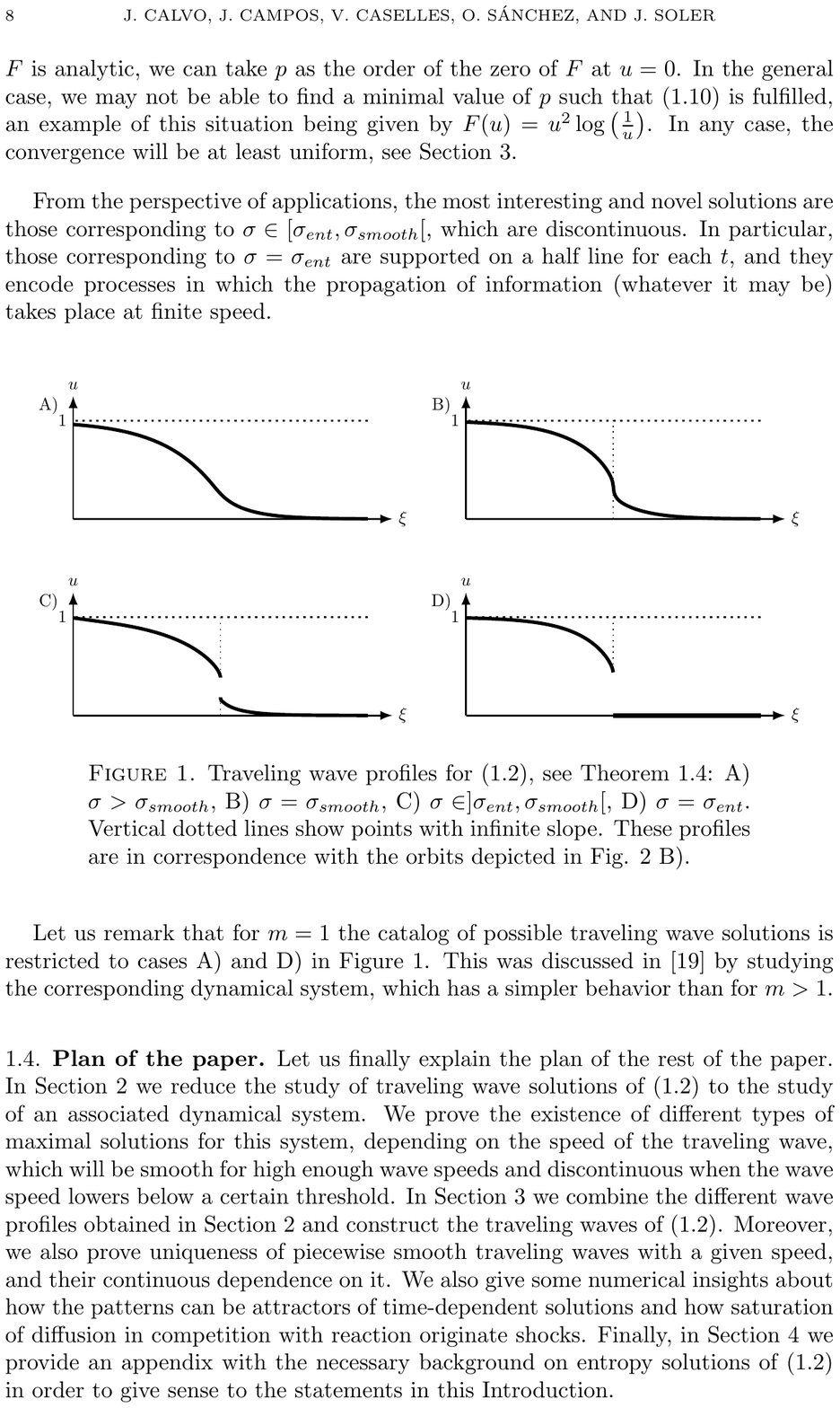}
\caption{Traveling wave profiles for \eqref{modelo1}, see Theorem \ref{parto}:  {A)}  $\sigma > \sigma_{smooth}$,  {B)}  $\sigma = \sigma_{smooth}$, {C)}  $\sigma \in ]\sigma_{ent}, \sigma_{smooth}[$, {D)}  $\sigma = \sigma_{ent}$. Vertical dotted lines show points with infinite slope.
These profiles are in correspondence with the orbits depicted in Fig. \ref{orbitas} {B)}.
}\label{TWprofiles}
\end{center}
\end{figure}

Let us remark that for $m=1$ the catalog of possible traveling wave solutions is restricted to cases A) and D) in Figure \ref{TWprofiles}.
This was discussed  in \cite{CGSS}  by studying the corresponding dynamical system, which has a simpler behavior than for $m > 1$.

\subsection{Plan of the paper}
Let us finally explain the plan of the rest of the paper. In Section \ref{lados} we reduce the study of traveling wave solutions of (\ref{modelo1}) to the study of an associated dynamical system. We prove the existence of different types of
maximal solutions for this system, depending on the speed of the traveling wave, which will be smooth for high enough wave speeds and
discontinuous when the wave speed lowers below a certain threshold. In Section \ref{sect4} we combine the different wave
profiles obtained in Section \ref{lados} and construct the traveling waves of (\ref{modelo1}). Moreover, we also prove uniqueness
of piecewise smooth traveling waves with a given speed, and their continuous dependence on it. We also give some numerical insights about how the  patterns can be attractors of time-dependent solutions and how saturation of diffusion in competition with reaction originate shocks.
Finally, in Section
\ref{preliminaris} we provide an appendix with  the necessary background on entropy solutions of (\ref{modelo1}) in order to give sense to the
statements in this Introduction.

\section{The associated planar dynamical system}
\label{lados}

In order to construct traveling wave profiles we substitute the traveling wave ansatz $u(x- \sigma t)$ into \eqref{modelo1}. This leads to the study of the following equation:
\begin{equation}
\label{start}
\nu \left(\frac{u^m u'}{\sqrt{u^2 + \frac{\nu^2}{c^2}|u'|^2}} \right)' + \sigma u' + F(u) = 0 \qquad \hbox{\rm in $\mathcal{D}^\prime(\R)$.}
\end{equation}
We can use (\ref{start}) to construct piecewise smooth entropy solutions of (\ref{modelo1}). For that, it suffices to join
together smooth solutions of (\ref{start}) defined on intervals of $\R$ fulfilling the following rules: 
\begin{itemize}
\item[(i)] If solutions corresponding to two consecutive intervals match in a
continuous
way, then the first derivative cannot have a jump discontinuity. Otherwise the term $\left(\frac{u^m u'}{\sqrt{u^2 + \frac{\nu^2}{c^2}|u'|^2}} \right)'$ would contribute with a Dirac delta  at
    the matching point, while the terms $\sigma u' + F(u)$ would be in $L^1_{\rm loc}(\R)$, and (\ref{start}) could not hold in
    $\mathcal{D}^\prime(\R)$. The same argument shows that when two solutions match in a continuous way and the first derivative is $+\infty$ (resp. $-\infty$) on one side then it must be also $+\infty$ (resp. $-\infty$) on the other side.
\item[(ii)] If solutions corresponding to two consecutive intervals match forming a jump discontinuity, then the speed of the moving front should obey the Rankine--Hugoniot condition (\ref{eq:speedp}) and the slope of the profile at both sides of the discontinuity must be infinite with the same sign (see \eqref{verticalA1}--\eqref{verticalA2}), except when one of the solutions we are matching with is the zero solution. In that case, when looking for decreasing profiles, we only have to worry about the infinite slope condition on the left side of the discontinuity.
\end{itemize}

In order to search for smooth solutions of (\ref{start}) in intervals of $\R$, we write (\ref{start}) as  an  autonomous planar system. For that we set
$$
r(\xi) = - \frac{\nu}{c} \frac{u'(\xi)}{\sqrt{|u(\xi)|^2 + \frac{\nu^2}{c^2}|u'(\xi)|^2}}.
$$
When looking for decreasing profiles, we observe that $r(\xi)\in [0,1]$ for all $\xi\in\R$ (while $r(\xi)\in [-1,1]$ for all $\xi\in\R$ if no monotonicity assumption is made).
Moreover, if $u(-\infty)=1$, $u(+\infty)=0$ and $u$ is smooth, then $u(\xi)\in [0,1]$ for all $\xi\in\R$.
Then,  for smooth solutions,  \eqref{start} is equivalent to the following  first order planar dynamical system:
 \begin{equation}
 \label{singular}
\displaystyle  \left\{
\begin{array}{cl}
 u'=  & \displaystyle - \frac{c}{\nu} \frac{r u}{\sqrt{1-r^2}},
\\
 &
 \\
\displaystyle r'=  & \displaystyle \frac{1}{u^{m-1}} \frac{r}{\sqrt{1-r^2}} \left(m u^{m-1}\frac{c}{\nu} r - \frac{\sigma}{\nu} \right) + \frac{F(u)}{c u^m}.
\end{array}
\right.
\end{equation}
In what follows only decreasing traveling profiles will be studied, for these are the only reasonable traveling waves that can be obtained, as we show in forthcoming Proposition \ref{unicidad}. Thus, through the present Section we restrict the study of \eqref{singular} to the set $[0,1]\times [0,1]$; this will be implicitly assumed in every statement referring to \eqref{singular}.  We notice that the flux related to the previous system is singular at the boundaries $r=1$ and $u=0$.
The first difficulty that we meet is precisely to give a sense to \eqref{singular} at those points. Indeed,
it will turn out that solutions of \eqref{singular} eventually hit either $u=0$ or $r=1$.
Thus, we will start considering solutions defined in $0<r<1$ and $0<u<1$, which give rise to smooth (classical) traveling wave solutions of \eqref{start} in intervals of $\R$. Then, entropy solutions of (\ref{modelo1}) can be constructed by pasting those solutions
while satisfying rules $(i)$ and $(ii)$ above. If the solutions of (\ref{start}) are defined in all $\R$, they are smooth entropy solutions.

\begin{remark} We note that the change of variables above does not coincide with the standard one in this type of problems.
\end{remark}

In the next two   subsections  we analyze the planar system (\ref{singular}). The knowledge of  the
Rankine--Hugoniot  relation  \eqref{eq:speedp} will be crucial to match solutions of (\ref{singular}) producing
discontinuous profiles that satisfy the entropy conditions.

%%%%%%%%%%%%%%
\subsection{The blow-up sets of the planar system}
%%%%%%%%%%%%%%%%%%
The following characterization of the clustering points of the orbits solving \eqref{singular} constitutes a key result in order to analyze the behavior of such orbits.

\begin{proposition}
\label{ab}
Let $(u,r):]\omega_-,\omega_+[\rightarrow ]0,1[\times]0,1[$ be a maximal solution of \eqref{singular} with $\sigma>0$. Then it satisfies  the following:
\begin{enumerate}

\item if $\omega_-=-\infty$, then $\lim_{\xi \to \omega_-} (u(\xi),r(\xi)) = (1,0)$,

\item if $\omega_+=+\infty$, then $\lim_{\xi \to \omega_+} (u(\xi),r(\xi)) = (0,r^*)$,

\item if $\omega_+<+\infty$, then $\lim_{\xi \to \omega_+} (u(\xi),r(\xi)) = (u^+,1)$ for some  $u^+ \in[u^*,1[$,

\item if $\omega_- > -\infty$, then the limit of $(u(\xi),r(\xi))$, as $\xi \to \omega_-$, belongs to one of the sets $]0, u^*] \times \{1\}$, $]0,1[ \times \{0\}$, or $\{1\} \times ]0,1[$.
\end{enumerate}
The points $(u^*,1)$ and $(0,r^*)$ are defined by
\begin{equation*} \label{u*}
u^*:=u^*(\sigma) = \left(\frac{\sigma}{c m} \right)^\frac{1}{m-1}
\end{equation*}
and
\begin{equation*} \label{r*}
 r^*:=r^*(\sigma) = \frac{\frac{\nu  K(0)}{c \sigma}}{\sqrt{1 + \left( \frac{\nu  K(0)}{c \sigma}\right)^2}}.
\end{equation*}
\end{proposition}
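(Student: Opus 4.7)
The plan is to exploit that $u$ is strictly decreasing along every orbit in $]0,1[\times]0,1[$, and to analyse the possible limit points case by case by ruling out configurations incompatible with the vector field. Since $u'=-(c/\nu)\,ru/\sqrt{1-r^2}<0$ whenever $(u,r)\in\,]0,1[^{2}$, the function $u$ is strictly monotone on $]\omega_-,\omega_+[$, and the limits $u_\pm:=\lim_{\xi\to\omega_\pm}u(\xi)$ exist in $[0,1]$ with $u_+<u_-$. Maximality forces the orbit to leave every compact subset of $]0,1[^2$ as $\xi\to\omega_\pm$, so its cluster set at $\omega_\pm$ meets $\partial(]0,1[^2)$.

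The core analytical tool is a sign inventory on the four edges of $]0,1[^2$. If $r=0$ and $u\in\,]0,1[$, then $u'=0$ and $r'=F(u)/(cu^m)>0$; if $u=1$, then $F(u)=0$ and $r'=(r/\sqrt{1-r^2})(mcr-\sigma)/\nu$; near $r=1$ the dominant part of $r'$ equals $(mc-\sigma/u^{m-1})/(\nu\sqrt{1-r^2})$, whose sign is that of $u-u^*$; and near $u=0$, multiplying by $u^{m-1}$ and using $F(u)/u\to K(0)$, one gets $u^{m-1}r'\to-\sigma r/(\nu\sqrt{1-r^2})+K(0)/c$, which vanishes precisely at $r=r^*$. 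These four facts encode exactly which boundary points are attainable.

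For items $(1)$ and $(2)$, suppose the corresponding endpoint is infinite. The orbit stays in $[0,1]^2$, so its $\alpha$- or $\omega$-limit set is nonempty, compact and invariant; strict monotonicity of $u$ forces that set into $\{u=u_\pm\}$, and invariance then requires any cluster point $(u_\pm,r^\sharp)$ to be a stationary point of the continuously extended vector field. The sign inventory shows that if $u_-\in\,]0,1[$ no such stationary point exists (the only candidate $r^\sharp=0$ has $r'>0$), whence $u_-=1$; the linearisation at $(1,0)$, in which $F(1)=0$ removes the reaction term, then drives $r\to 0$, proving $(1)$. In the same way $u_+=0$ in $(2)$, and the desingularised limiting equation $u^{m-1}r'\to-\sigma r/(\nu\sqrt{1-r^2})+K(0)/c$ together with a Lyapunov-type estimate on $|r-r^*|^2$ pins $r$ to the unique zero $r^*$. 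For items $(3)$ and $(4)$ the orbit reaches $\partial(]0,1[^2)$ in finite time: it cannot touch $u=1$ by monotonicity, the estimate $|u'|\le (c/\nu)\,u/\sqrt{1-r^2}$ prevents $u$ from hitting $0$ while $r$ stays in a compact subset of $[0,1[$, and the sign of $r'$ on $r=0$ rules out that edge in forward time. Case $(3)$ therefore reduces to $r\to 1$, and the sign of $mc-\sigma/u^{m-1}$ near $r=1$ forces $u^+\in[u^*,1[$, since for $u<u^*$ the singular term of $r'$ is negative and repels $r$ from $r=1$; the same inventory leaves case $(4)$ exactly the three admissible backward-time exits $\{1\}\times\,]0,1[$, $]0,1[\times\{0\}$ and $]0,u^*]\times\{1\}$.

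The main obstacle, in my view, is making the two singular-edge analyses fully rigorous: the Lyapunov step pinning $r$ to $r^*$ as $u\to 0^+$ in case $(2)$, and the proof that finite-time blow-up at $r=1$ can only occur from $u\ge u^*$, and not in some degenerate tangential manner. Both are naturally handled by a Sundman-like rescaling of the independent variable (multiplying by $u^{m-1}$ near $u=0$, or by $\sqrt{1-r^2}$ near $r=1$) that desingularises the vector field; after this rescaling standard hyperbolic-equilibrium theory applies and the stated conclusions follow from compactness and invariant-manifold arguments.
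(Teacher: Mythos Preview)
Your outline is correct and follows essentially the same route as the paper. Both proofs rest on the two ingredients you isolate: the sign inventory of the vector field along the four boundary edges, and a desingularisation (what you call a Sundman rescaling) to make that inventory rigorous at $r=1$ and $u=0$. The paper simply front-loads the rescaling: it introduces the regular system obtained by multiplying both equations of \eqref{singular} by $u^{m-1}\sqrt{1-r^2}$, works on the closed square with a continuous field, and then reads off the boundary behaviour. Your $\alpha$/$\omega$-limit-set packaging is a perfectly good alternative to the paper's more hands-on arguments (mean value theorem in item (1), explicit integral formula $u(\xi)=u(0)\exp\bigl(-(c/\nu)\int_0^\xi r/\sqrt{1-r^2}\,ds\bigr)$ in item (3)); once you pass to the desingularised flow, the invariance arguments you sketch go through because on $\{U=0\}$ the reduced one-dimensional dynamics have $(0,r^*)$ as global attractor, which replaces your proposed Lyapunov estimate. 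The only point where you should be a bit more careful is the claim that ``invariance then requires any cluster point to be a stationary point'': this is true here because the limit set sits in a line on which the desingularised field is transverse except at isolated zeros, but it is worth saying explicitly rather than invoking stationarity as a general principle.
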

\begin{proof}
We start the proof by justifying the existence of the limits $\lim_{\xi \to \omega_\pm} (u(\xi),$ $r(\xi))$ $= (u^\pm,r^\pm)$   for any  solution of \eqref{singular}, then we deal with the four specific assertions of the proposition. We do this in a series of steps.

{\it Step 1.} Using standard arguments on continuation of solutions of an ODE, it is straightforward to deduce that the pairs $(u^\pm,r^\pm)$  should belong to the boundary of $[0,1]\times[0,1]$ in case they exist. To show that these limits exist we pass to an equivalent system which is absent of singularities. This is achieved  formally multiplying both equations in \eqref{singular} by $u^{m-1} \sqrt{1-r^2}$. Thus, we end up with a system  on $]0,1[\times ]0,1[$ which is not singular,
\begin{equation}
 \left\{
\begin{array}{cl}
 U'=  & - \frac{c}{\nu} R U^m,
\\
 &
 \\
R'=  & R \left(m U^{m-1}\frac{c}{\nu} R - \frac{\sigma}{\nu} \right) + \frac{K(U)}{c}\sqrt{1-R^2}.
\end{array}
\right.
\label{regu}
\end{equation}
Solutions of \eqref{regu} are related to solutions of \eqref{singular} by means of
 $r(\xi) = R(\phi(\xi))$, $ u(\xi) = U(\phi(\xi)), $
 where $\phi$ is an strictly increasing reparametrization governed  by $\phi'(\xi) = \frac{1}{u(\xi)^{m-1}\sqrt{1-r^2(\xi)}}$.
 The analysis of the directions of the flux on the boundaries of the $(U,R)$-domain is the same as the one we would perform for the $(u,r)$-system, but having the advantage that the flux is continuous in
$[0,1]\times [0,1]$.

{\it Step 2.} Existence for the initial (and final) value problem for \eqref{regu} is granted in the whole closed set. The regularity of the flow ensures uniqueness in the set $]0,1] \times [0,1[$. We also have uniqueness in the set $\{0\}\times[0,1[$. This is seen as follows: First, if an orbit verifies that $U(\xi_0)=0$ for some $\xi_0 $ in its domain, then it is easily seen using the first equation of \eqref{regu} that $U=0$ in its whole domain of existence. Next, we notice that for every such orbit the second equation in \eqref{regu} gives the value of $R'$ as a smooth function of $R$ alone, thus we have uniqueness of solutions for it.

{\it Step 3.}
Note that $r^*$ and $u^*$ appear when we study the  equilibria and bouncing points --see below-- of the $(U,R)$-system.
In fact, the points $(0, r^*)$ and $(1,0)$ are equilibria. In addition, the flow in $]0,1[\times ]0,1[$  points to the left, except at the boundaries, where we have the following flux analysis (see Fig. \ref{orbitas}-A):
\begin{enumerate}
\item If $R=0$, $U \in ]0,1[$,  the flux is completely vertical and pointing inwards.

\item If $U=1$, $R\in ]0,1[$, the flux is always pointing inwards.

\item If $R=1$, there are two possibilities  depending on the value of $\sigma$.
If $\sigma \ge c m$ the flux is always heading SW; notice that $u^*\ge 1$ in such a case. On the other hand, if $\sigma < c m$ and $U\in ]0,u^*[$ the flux points SW while for $U \in ]u^*,1[$ the flux points NW.

\item If  $U=0$, we have a positively invariant manifold.
For $R\in ]0,r^*[$ the flux is completely vertical and pointing upwards. While for $R \in ]r^*,1[$ the flux is again completely vertical but pointing downwards. Solutions of \eqref{regu} constrained to this manifold are globally attracted by $(U=0,R = r^*)$.

\item The point $(u^*, 1)$ is a regular bouncing point in the sense that the vector field is horizontal and pointing to the left. At this level of discussion we  do not have tools to precise if $(u^\pm, r^\pm)$ could be identified with $(u^*,1)$ for some solution. As we will  specify later,  these possibilities can appear for some types of solutions.
\end{enumerate}

{\it Step 4.} If $(u,r)$ is a solution of \eqref{singular} defined in $]\omega_-, \omega_+[$, then the monotone change of variables $\phi : \, ]\omega_-, \omega_+[ \to ]\omega_-^{(U,R)}, \omega_+^{(U,R)}[$ allows to obtain $(U(\phi(\xi)),R(\phi(\xi))) $ $= (U(\xi'), R(\xi'))$, which is a solution of \eqref{regu} in the interval $]\omega_-^{(U,R)}, \omega_+^{(U,R)}[$.
Now we show that the limits $\lim_{\xi ' \to \omega_\pm^{(U,R)}} (U(\xi'), R(\xi'))$ exist. This is immediate for the component $U(\xi')$ as it is monotone. Thanks to our knowledge of the flux diagram at the boundaries and its continuity on $[0,1]\times [0,1]$, we are able to rule out wild oscillations of the orbits close to their hypothetical clustering points, thus the existence of $\lim_{\xi ' \to \omega_\pm^{(U,R)}} R(\xi')$ follows easily.

Then we can ensure that
$$
(u^\pm, r^\pm) = \lim_{\xi \to \omega_\pm} (u(\xi), r(\xi))  = \lim_{\xi ' \to \omega_\pm^{(U,R)}} (U(\xi'), R(\xi')).
$$
From the previous  flux analysis (see Fig. \ref{orbitas}-A) we know that $(u^-, r^-) \in ]0,1] \times \{ 0\} \cup ]0, u^*] \times \{1\} \cup \{1\} \times ]0,1[$. The event $(u^-, r^-) =(0,r^*)$ cannot take place since $u$ is decreasing. Note also that $(u^+, r^+) \in \{(0,r^*)\} \cup [u^*,1[ \times \{1\}$. In the same way as above $(u^+,r^+)=(1,0)$ is excluded since $u$ is decreasing.
We also observe that no solution starting at any point in $]0,1[\times ]0,1[$ can reach the point $(1,1)$ --it is never an exit point. It only can be an entrance
point when $\sigma \geq mc$. Then, it will be
always considered  in the entrance set.

{\it Step 5.}
Now we are ready to prove the precise assertions of the proposition. We start with the first one. To begin, we note that  $(u^-,r^-)$ does not belong to $]0,1[ \times \{0\} \cup \{1\} \times ]0, 1[ $ and that the flow \eqref{singular} at $(1,0)$ is regular. To show that $(u^-, r^-) \notin ]0,u^*] \times \{1\}$ we  will argue by contradiction (having proved that, the first assertion follows). For this purpose, we can use the monotonicity of $u$ and the mean value theorem to construct a sequence $\xi_n \to -\infty$ for which $u'(\xi_n) \to 0$ (see \cite{CGSS} for details). This contradicts the fact that
$$
\lim_{n \to +\infty} -\frac{r(\xi_n)u(\xi_n)}{\sqrt{1-r(\xi_n)^2}}=-\infty.
$$
Thus we have $\omega^- \neq - \infty$ and the first assertion is verified.

To prove the second assertion, it is enough to remark that $(u^+, r^+) \notin [u^* ,1[ \times \{1\}$. This can be proved by a similar contradiction argument as in the previous case but taking here
a sequence $\xi_n \to \infty$ for which $u'(\xi_n) \to 0$.

The third assertion follows if we can prove that
$\lim_{\xi \to \omega_+} (u(\xi),r(\xi)) = (0,r^*)$ cannot hold for $\omega_+<+\infty$.
Integrating for $u(\xi)$ in \eqref{singular} leads us to
\begin{equation}
\label{base}
u(\xi) = u(0) \exp \left\{-\frac{c}{\nu}\int_0^\xi \frac{r(s)\ ds}{\sqrt{1-r^2(s)}}Ê\right\}.
\end{equation}
If we are to have $u(\omega^+)=0$ for some $\omega^+<+\infty$, then we need the above integral to be divergent for $\xi = \omega^+$.
 But this cannot happen as $r(\xi)$ tends to $r^*<1$ when  $\xi$ goes to $\omega^+$.

Finally, to prove the fourth assertion we have to exclude the case $(u^-,r^-) =(1,0)$. But this is an easy consequence of the fact that $(1,0)$  is  a regular equilibrium of \eqref{singular}.
 \end{proof}
 \begin{remark}
 Note that when  $\sigma = 0$ we have that $r^*=1$ and then the proof of Proposition \ref{ab} breaks down, namely \eqref{base} is no longer useful. However, in this case the behavior of the orbits is simpler
as we will show in Proposition \ref{zero} below.
 \end{remark}
\begin{remark}
In the proof of Proposition \ref{ab} we have introduced an auxiliary regular system in order to analyze the direction fields in $[0,1]\times [0,1]$. Note that the cases $r=1$ and $u=0$ are singular for  \eqref{singular}, but we can analyze the direction fields on these sets as ``limits'' of those in the regular regions. These direction fields coincide with those of  the regular system \eqref{regu}, see Figure \ref{orbitas}-A.
\end{remark}
%%%%%%
\begin{figure}[h]
\begin{center}
\includegraphics[width=13cm]{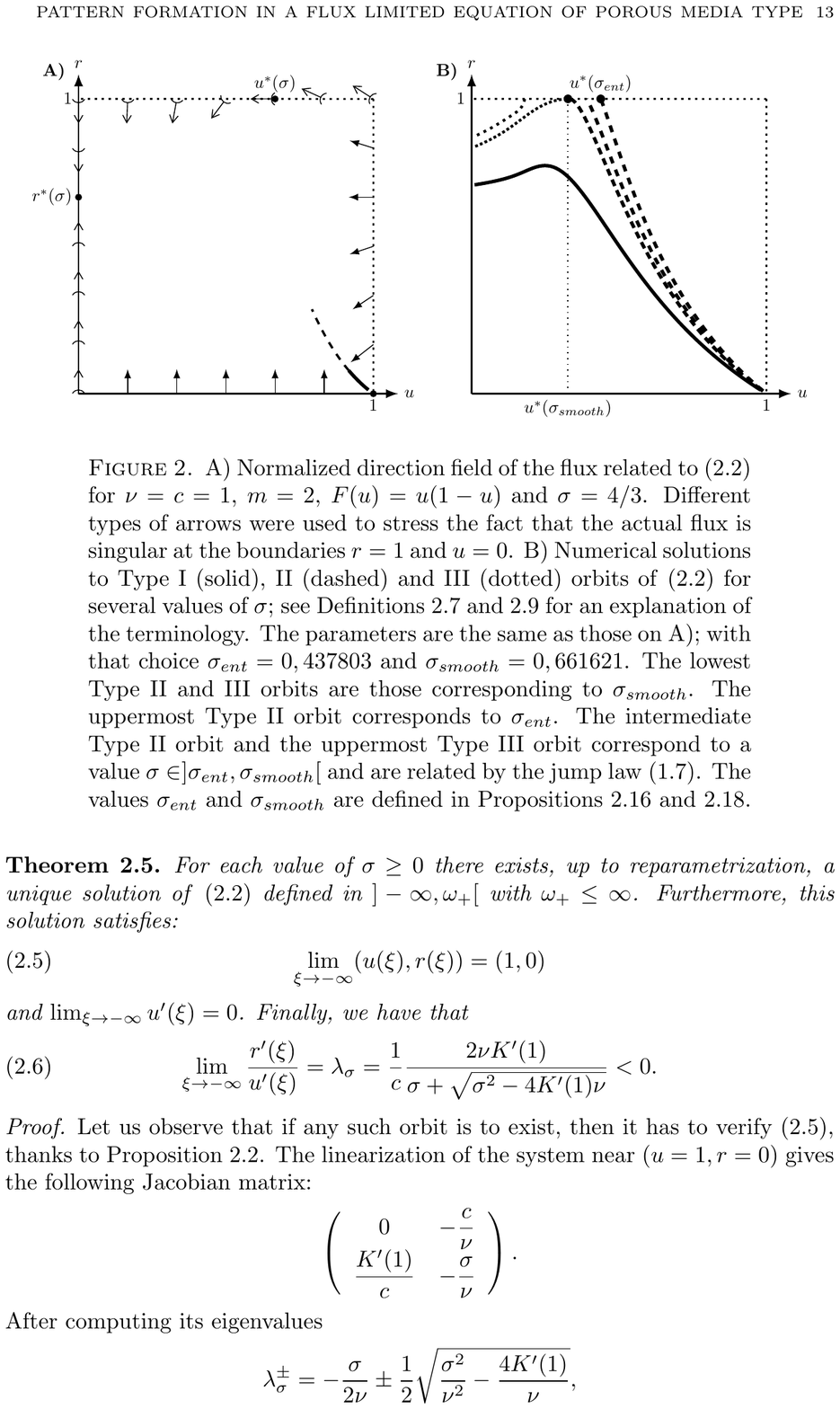}
\caption{{A)} Normalized direction field of the flux related to \eqref{singular} for $\nu = c = 1$,  $m=2$, $F(u) = u(1-u)$ and $\sigma = 4/3$.
Different types of arrows were used to stress the fact that the actual flux is singular at the boundaries $r=1$ and $u=0$. {B)} Numerical solutions to Type I (solid), II (dashed) and III (dotted) orbits of \eqref{singular} for several values of $\sigma$; see  Definitions \ref{tipos12} and \ref{tipo3} for an explanation of the terminology. The parameters are the same as those on A); with that choice $\sigma_{ent} =0,437803$ and $\sigma_{smooth}=0,661621$. The lowest Type II and III orbits are those corresponding to $\sigma_{smooth}$. The uppermost Type II orbit corresponds to $\sigma_{ent}$. The intermediate Type II orbit and the uppermost Type III orbit correspond to a value $\sigma \in ]\sigma_{ent},\sigma_{smooth}[$ and are related by  the jump law \eqref{eq:speedp}.   The values $\sigma_{ent}$ and $\sigma_{smooth}$ are defined in Propositions \ref{s1} and \ref{propdisc}. }\label{orbitas}
\end{center}
\end{figure}
%%%%%%%%%%%%%%
\subsection{Solutions to the planar system defined on a half line}
\label{campos1}
%%%%%%%%%%%%%%%%%%

Since we are looking for entropy solutions to \eqref{modelo1} which are piecewise smooth we can discard
all those orbits of \eqref{singular} defined in bounded intervals. By Proposition \ref{ab} all of them exhibit
a finite slope at least at one of the ends of their interval of definition, which, as we mentioned  at the beginning of Section 2, makes them useless in order to construct globally defined solutions by means of matching procedures (see however the proof of Proposition \ref{unicidad} for a more detailed explanation of this fact). So,  in this section we will deal with solutions globally defined in the whole $\R$, or with solutions
defined in  a half line (that is $]-\infty,\omega_+[ $ or $]\omega_-,+\infty[$) and such that their slope at $\omega_\pm$ is infinite.

The following theorem describes all those orbits defined in $\R$ or $]-\infty,\omega_+[ $.

\begin{theorem}
\label{caaampos}
For each value of $\sigma \ge 0$ there exists, up to reparametrization, a unique solution of \eqref{singular}  defined in $]-\infty,\omega_+[ $ with $\omega_+\leq\infty$. Furthermore, this solution satisfies:
\begin{equation}
\label{condini}
\lim_{\xi \to -\infty} (u(\xi),r(\xi)) = (1,0)
\end{equation}
and $\lim_{\xi \to -\infty} u'(\xi) = 0$. Finally, we have that
\begin{equation}
\label{pendientes}
\lim_{\xi \to -\infty} \frac{r'(\xi)}{u'(\xi)}=\lambda_\sigma = \frac{1}{c} \frac{2 \nu K'(1)}{\sigma + \sqrt{\sigma^2 - 4 K'(1) \nu}}<0.
\end{equation}
\end{theorem}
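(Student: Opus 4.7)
The plan is to apply the classical stable/unstable manifold theorem at the equilibrium $(1,0)$ of the regularized planar system \eqref{regu}, and then transfer the resulting orbit back to \eqref{singular} via the reparametrization $\phi$ introduced in the proof of Proposition \ref{ab}. Note that at $(u,r)=(1,0)$ one has $\phi'=[u^{m-1}\sqrt{1-r^2}]^{-1}\to 1$, so $\phi$ is a local diffeomorphism near $-\infty$ and orbits of \eqref{singular} with $\alpha$-limit $(1,0)$ correspond bijectively (up to translation of the time parameter) to orbits of \eqref{regu} with the same $\alpha$-limit.

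I would then linearize \eqref{regu} at $(1,0)$. Using $K(1)=0$, the Jacobian is
\[
J=\begin{pmatrix} 0 & -c/\nu \\ K'(1)/c & -\sigma/\nu \end{pmatrix},\qquad \det J=\frac{K'(1)}{\nu}<0,
\]
so the characteristic equation $\lambda^2+(\sigma/\nu)\lambda+K'(1)/\nu=0$ has two real eigenvalues of opposite signs,
\[
\lambda_\pm=\frac{-\sigma\pm\sqrt{\sigma^2-4K'(1)\nu}}{2\nu},\qquad \lambda_+>0>\lambda_-.
\]
Hence $(1,0)$ is a hyperbolic saddle. The unstable manifold theorem provides a $C^1$ one-dimensional local unstable manifold $W^u$ through $(1,0)$, tangent to an eigenvector $v_+$ of $\lambda_+$ with slope $v_{+,2}/v_{+,1}=-\nu\lambda_+/c<0$. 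Of the two branches of $W^u$, one enters the open quadrant $\{U<1,\,R>0\}$ while the other leaves $[0,1]^2$; we retain the former. The retained branch remains in $[0,1]^2$ in forward $\phi$-time by the flux analysis of Step~3 in Proposition \ref{ab}, since the boundary segments $\{R=0,\,U\in\,]0,1[\,\}$ and $\{U=1,\,R\in\,]0,1[\,\}$ are crossed only inwards. Uniqueness of this orbit, up to reparametrization, is built into the one-dimensionality of the local unstable manifold.

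Finally, the first equation of \eqref{singular} gives $u'=-cru/(\nu\sqrt{1-r^2})\to 0$ as $(u,r)\to(1,0)$, so $\lim_{\xi\to-\infty}u'(\xi)=0$. Since the orbit is tangent at $(1,0)$ to $v_+$, an application of L'H\^opital's rule yields
\[
\lim_{\xi\to-\infty}\frac{r'(\xi)}{u'(\xi)}=\frac{v_{+,2}}{v_{+,1}}=-\frac{\nu\lambda_+}{c}=\frac{\sigma-\sqrt{\sigma^2-4K'(1)\nu}}{2c}=\frac{1}{c}\cdot\frac{2\nu K'(1)}{\sigma+\sqrt{\sigma^2-4K'(1)\nu}}=\lambda_\sigma,
\]
where the last equality follows by rationalization and the quantity is strictly negative because $-4K'(1)\nu>0$ forces $\sqrt{\sigma^2-4K'(1)\nu}>\sigma$. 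The main subtle point is the identification of the correct branch of $W^u$: once the sign of $v_{+,2}/v_{+,1}$ is known, it has to be matched against the inward-pointing flux along the two boundary segments of the invariant square, which requires combining the eigenvector computation with the flow analysis already carried out in Proposition \ref{ab}.
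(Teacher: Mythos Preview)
Your proof is correct and follows the same strategy as the paper: linearize at the equilibrium $(1,0)$, identify it as a hyperbolic saddle, and invoke the unstable manifold theorem to obtain the unique orbit emanating from $(1,0)$ with tangent direction given by the unstable eigenvector. The paper carries out the linearization directly on \eqref{singular} (which is legitimate since $(1,0)$ is a regular point of that system), while you route through the desingularized system \eqref{regu}; since the reparametrization factor $u^{m-1}\sqrt{1-r^2}$ equals $1$ at $(1,0)$, the two Jacobians coincide and nothing is lost or gained.

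Two small remarks. First, the uniqueness claim in the theorem is among \emph{all} solutions defined on $]-\infty,\omega_+[$, not just among those with $\alpha$-limit $(1,0)$; you should cite Proposition~\ref{ab}(1) at the outset (as the paper does) to reduce to that case. Second, your appeal to ``L'H\^opital'' for $\lim r'/u'$ is slightly misnamed: the limit follows directly from the $C^1$ regularity of the unstable manifold, which gives $dr/du\to v_{+,2}/v_{+,1}$ along the orbit, and $r'/u'=dr/du$ by the chain rule. The conclusion is unaffected.
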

\begin{proof}
 Let us observe that if any such orbit is to exist, then it has to verify (\ref{condini}), thanks to Proposition \ref{ab}. The linearization of the system near $(u=1,r=0)$ gives the following Jacobian matrix:
 $$
\displaystyle   \left(
 \begin{array}{cc}
0 & \displaystyle -\frac{c}{\nu} \\
\displaystyle \frac{K'(1)}{c} & \displaystyle -\frac{\sigma}{\nu}
 \end{array}
 \right).
 $$
 After computing its eigenvalues
$$
\lambda_\sigma^\pm=-\frac{\sigma}{2 \nu} \pm \frac{1}{2} \sqrt{\frac{\sigma^2}{\nu^2}- \frac{4 K'(1)}{\nu}},
$$
we learn that this point is hyperbolic. This allows to apply the unstable manifold theorem (see \cite{Hartman}). Since the eigenvector associated with $\lambda_\sigma^+$ is $(c \frac{\sigma + \sqrt{\sigma^2-4 \nu K'(1)}}{2\nu K'(1)},1)$, the solutions starting at $(1,0)$  enter the diagram,  and they do it with a slope given by \eqref{pendientes}. Thanks to the Hartman--Gro{\ss}man theorem, these solutions are uniquely determined for a given value of $\sigma$.
\end{proof}

\begin{remark}
\label{decre}
The lower the value of $\sigma$, the higher the entrance angle ({measured with respect to the $u$-axis}). There is a maximum value for $\sigma = 0$, namely
$$
\lambda_0 = -\frac{\sqrt{\nu |K'(1)|}}{c}.
$$
We also stress that $\lambda_\sigma$ is increasing as a function of $\sigma$.
\end{remark}

At this point we introduce some terminology that will be useful in the sequel, see Fig \ref{orbitas}. We name the particular types of trajectories we will be interested in.

\begin{definition}
\label{tipos12}
Let $(u(\xi),r(\xi)) \in ]0,1[\times ]0,1[$ be a maximal solution  of \eqref{singular}--\eqref{condini} for a given value of $\sigma$.
We shall say that:
\begin{itemize}

\item  $(u,r)$ is  a Type I orbit  if $(u,r)$ is defined in $\R$ and $\lim_{\xi \to +\infty} (u(\xi),r(\xi)) = (0,r^*(\sigma))$

\item  $(u,r)$ is  a Type II orbit  if  $(u,r)$ is defined in $]-\infty,\omega_+[$, with $\omega_+ < \infty$, and $\lim_{\xi \to \omega_+} (u(\xi),r(\xi)) = (u^+,1)$ for some $0<u^+<1$.

 \end{itemize}

 For a given value of $\sigma$ only one of these two possibilities occurs.
 \end{definition}
Note that no Type II orbit can show up for $\sigma \ge mc$, since $u^*(\sigma)\ge 1$.

\begin{remark}
\label{prop}
Any Type I orbit induces a smooth solution to \eqref{start} and satisfies $\lim_{\xi \to +\infty} u(\xi)=0$. Such profiles are depicted in Figure \ref{TWprofiles} A). Type II  orbits can be found  in Figure \ref{TWprofiles} too; they correspond to the left  branches  (with respect to the vertical dotted line)  in the cases B, C and D.
\end{remark}
We stress that
all profiles coming from Type I orbits are regular solutions of \eqref{singular} supported in the whole line. Type II  orbits may also give rise to traveling wave solutions, after a suitable matching procedure to extend them to the whole real line is performed. This will be explained in Section \ref{sect4}, but before that we need the following:

 \begin{definition}
 \label{tipo3}
We will say that a maximal solution  $(u(\xi),r(\xi))$ of \eqref{singular} for a given value of $\sigma$
is a Type III orbit if  $(u,r)$ is defined  in  $]\omega_-,+\infty[$, $\lim_{\xi \to \omega_-} (u(\xi),$ $r(\xi)) = (u^-,1)$ for some $0<u^- \leq u^*$ and $\lim_{\xi \to +\infty} (u(\xi),r(\xi)) = (0,r^*(\sigma))$.
\end{definition}

As regards the uniqueness of Type II and III  orbits with respect to the beginning or ending point (in the limit sense established in Proposition \ref{ab}) we will describe the orbits of the planar system as graphs $u \mapsto r(u)$ whenever this is possible.
This is always the case if we are prepared to allow some derivatives to become infinite eventually (and this may happen only at the boundaries of the domain). Indeed, if a trajectory can be expressed locally as a graph $u \mapsto r(u)$, its derivative is given by
\begin{equation}
\label{u-grafo}
\frac{d r(u)}{d u} = \frac{r'}{u'} = \frac{\sigma}{c u^m} - m \frac{r}{u} - \frac{K(u)}{u^m} \frac{\nu}{c^2} \frac{\sqrt{1-r^2}}{r}.
\end{equation}

\begin{lemma}\label{lemma4}
The formulations \eqref{singular} and \eqref{u-grafo} are equivalent. Moreover, {regular} solutions to \eqref{singular}--\eqref{condini} correspond to solutions of \eqref{u-grafo} such that
\begin{equation}
\lim_{u \to 1}r(u)=0 .
\label{bc}
\end{equation}
In addition, $\lim_{u \to 1}r'(u)=\lambda_\sigma$ holds.
\end{lemma}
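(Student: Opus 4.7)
The plan is to exploit the strict monotonicity of $u$ along any solution of \eqref{singular} in $]0,1[\times]0,1[$ to recast the planar system as a scalar ODE in the variable $u$, and then read off both the boundary behaviour at $u=1$ and the limiting slope there from the results already obtained in this section.

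First I would observe that on $]0,1[\times]0,1[$ the first equation of \eqref{singular} forces $u'(\xi)<0$, so the component $u$ is a strict $C^1$-diffeomorphism from its interval of definition onto its image. This allows one to invert $\xi=\xi(u)$ and set $r(u):=r(\xi(u))$; the chain rule yields $\frac{dr}{du}=r'(\xi)/u'(\xi)$, and a direct algebraic simplification (using $F(u)=uK(u)$ and cancelling the common factor $ur/\sqrt{1-r^2}$ that appears in both equations of \eqref{singular}) transforms this quotient into precisely the right-hand side of \eqref{u-grafo}. Conversely, given a solution $r=r(u)$ of \eqref{u-grafo} with values in $]0,1[$ on a subinterval of $]0,1[$, the first equation of \eqref{singular} reduces to the autonomous scalar ODE
\[
u'=-\frac{c}{\nu}\frac{r(u)\, u}{\sqrt{1-r(u)^2}},
\]
whose locally Lipschitz right-hand side determines $u(\xi)$ up to a translation in $\xi$; setting $r(\xi):=r(u(\xi))$ then produces a solution of \eqref{singular}. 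This would establish the equivalence, modulo the reparametrization $\xi\leftrightarrow u$.

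Next, for the correspondence of regular solutions of \eqref{singular}--\eqref{condini} with those solutions of \eqref{u-grafo} satisfying \eqref{bc}, I would invoke Proposition \ref{ab}(1): any such solution has $\lim_{\xi\to-\infty}(u(\xi),r(\xi))=(1,0)$, so composing with the monotone inverse $\xi=\xi(u)$ immediately gives $\lim_{u\to 1}r(u)=0$. For the converse direction, starting from $r(u)$ satisfying \eqref{bc} together with the finite slope established just below, one has $r(u)\sim\lambda_\sigma(u-1)$ near $u=1$, so that the representation $\xi(u)=-\frac{\nu}{c}\int_{u_0}^{u}\frac{\sqrt{1-r(s)^2}}{r(s)\, s}\, ds$ diverges to $-\infty$ as $u\to 1$, recovering \eqref{condini}.

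Finally, the identity $\lim_{u\to 1}r'(u)=\lambda_\sigma$ follows directly from Theorem \ref{caaampos}: since $\frac{dr}{du}=r'(\xi)/u'(\xi)$ and $u(\xi)\to 1$ as $\xi\to-\infty$, formula \eqref{pendientes} combined with the continuity of the reparametrization yields the claim. I expect the most delicate point to be the analysis at the corner $(u,r)=(1,0)$ when reconstructing a solution of \eqref{singular} out of a solution of \eqref{u-grafo}--\eqref{bc}, since the planar vector field is regular there while the graph formulation degenerates; this is precisely where one leans on the hyperbolicity of the equilibrium $(1,0)$ already exploited in the proof of Theorem \ref{caaampos}.
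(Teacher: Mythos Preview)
Your proposal is correct and follows the same approach as the paper, which consists of the single sentence ``This is straightforward once we notice that $u'<0$ in the domain.'' You have simply spelled out in full the change-of-variables argument and the invocation of Proposition~\ref{ab} and Theorem~\ref{caaampos} that the paper leaves implicit; no additional ideas are required or used.
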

\begin{proof}
This is straightforward once we notice that $u'< 0$  in the  domain.
\end{proof}

Once we are able to pass to the formulation given by \eqref{u-grafo}, we get the following result.

\begin{lemma}
\label{viejo}
Existence and uniqueness for \eqref{u-grafo} holds backwards at any point of the form $(\tilde{u},1)$ with $ u^*(\sigma) \leq \tilde{u}<1$. Existence and uniqueness for \eqref{u-grafo} holds forwards at any point of the form $(\tilde{u},1)$ with $0 < \tilde{u} \leq u^*(\sigma)$ and $\tilde{u}<1$.
\end{lemma}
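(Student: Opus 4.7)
The proof splits into two regimes depending on whether $\tilde u \neq u^*(\sigma)$ or $\tilde u = u^*(\sigma)$; the latter is the singular case. Recall that $u^* = (\sigma/(cm))^{1/(m-1)}$, so $\sigma = cm(u^*)^{m-1}$.

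\emph{Regular regime} ($\tilde u \ne u^*$). Direct evaluation of the right-hand side of \eqref{u-grafo} at $r = 1$ gives
\[
\left.\frac{dr}{du}\right|_{(\tilde u, 1)} = \frac{\sigma}{c\tilde u^m} - \frac{m}{\tilde u} = \frac{m}{\tilde u}\left[\left(\frac{u^*}{\tilde u}\right)^{m-1} - 1\right],
\]
strictly negative when $\tilde u > u^*$ and strictly positive when $\tilde u < u^*$. Since the right-hand side of \eqref{u-grafo} is continuous (its potentially singular factor $\sqrt{1-r^2}/r$ vanishes at $r = 1$) and bounded away from zero near $(\tilde u, 1)$, I will exchange the roles of the variables and consider $du/dr = 1/[\text{RHS of }\eqref{u-grafo}]$ instead. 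The latter is smooth in $u$ and continuous (in fact merely H\"older-$1/2$) in $r$ near $(1, \tilde u)$. The Cauchy--Lipschitz theorem, which requires only Lipschitz dependence on the dependent variable, then applies and yields a unique local $u(r)$ with $u(1) = \tilde u$. Inverting (using $du/dr \ne 0$) produces the desired unique $r(u)$ with $r(\tilde u) = 1$, defined on the appropriate one-sided neighborhood of $\tilde u$: $u > \tilde u$ (backward) when $\tilde u > u^*$, and $u < \tilde u$ (forward) when $\tilde u < u^*$.

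\emph{Singular regime} ($\tilde u = u^*$). Here $dr/du(u^*, 1) = 0$, so inversion fails and \eqref{u-grafo} is genuinely degenerate. The plan is to desingularize via $\phi := \sqrt{2(1-r)} \ge 0$: using $dr/du = -\phi\phi'$ and $\sqrt{1-r^2} = \phi\sqrt{1-\phi^2/4}$, equation \eqref{u-grafo} transforms into
\[
\phi'(u) = \frac{G(u)}{\phi} - \frac{m\phi}{2u} + \bar\beta(u, \phi), \qquad G(u) := \frac{m}{u} - \frac{\sigma}{cu^m},
\]
with $\bar\beta(u, \phi) = \nu K(u)\sqrt{1 - \phi^2/4}/[c^2 u^m(1 - \phi^2/2)]$. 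Since $G(u^*) = 0$ and $G'(u^*) = m(m-1)/(u^*)^2 > 0$, the apparent singularity $G/\phi$ is balanced precisely when $\phi$ grows linearly in $|u - u^*|$. I will then insert the ansatz $\phi(u) = y_\pm|u - u^*| + \psi(u)$ and match leading orders, which fixes $y_\pm > 0$ as the unique positive roots of
\[
y_+^2 - \bar\beta(u^*, 0)\, y_+ = \frac{m(m-1)}{(u^*)^2} \quad (u > u^*), \qquad y_-^2 + \bar\beta(u^*, 0)\, y_- = \frac{m(m-1)}{(u^*)^2} \quad (u < u^*).
\]
Existence and uniqueness of the correction $\psi(u) = O((u-u^*)^2)$ will then follow from a contraction-mapping argument for the associated integral equation in a Banach space of continuous functions with $|\psi(u)| \le M(u - u^*)^2$ on a sufficiently small one-sided neighborhood of $u^*$, the contraction being ensured by the linear vanishing of $G$ at $u^*$ together with the smallness of $|u - u^*|$. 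Returning to $r = 1 - \phi^2/2$ gives the unique solution of \eqref{u-grafo} in each direction from $(u^*, 1)$.

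The main obstacle is the singular regime. The dual failure of \eqref{u-grafo} at $(u^*, 1)$ --- vanishing of the leading derivative together with the non-Lipschitz $\sqrt{1-r^2}$ factor --- rules out standard ODE theorems. The key is the substitution $\phi = \sqrt{2(1-r)}$, which converts the zero of $dr/du$ into a simple pole $G(u)/\phi$ whose strength is exactly balanced by the simple zero of $G$ at $u = u^*$; this produces the characteristic parabolic vertex $r - 1 \sim -(y_\pm^2/2)(u - u^*)^2$ of the two (asymmetric) separatrices through $(u^*, 1)$. Finding the right weighted function space in which both the singular denominator and the square-root correction are simultaneously controlled is the core technical step.
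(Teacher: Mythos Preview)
Your treatment of the regular regime $\tilde u\neq u^*(\sigma)$ is correct and is in fact a pleasant alternative to the paper's argument: swapping the roles of $u$ and $r$ turns the non-Lipschitz $\sqrt{1-r^2}$ factor into a harmless continuous dependence on the \emph{independent} variable, so Picard--Lindel\"of applies directly and gives both existence and uniqueness in one stroke. The paper instead obtains existence from Peano's theorem (after a continuous extension of the right-hand side to $r>1$, using the sign of $r'(\tilde u)$ to force the solution back into $r<1$) and treats uniqueness separately via the substitution $s=\sqrt{1-r^2}$. Your route is shorter here.

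The singular regime $\tilde u=u^*(\sigma)$, however, is where your proposal has a genuine gap. After the (correct) substitution $\phi=\sqrt{2(1-r)}$ you propose an ansatz $\phi=y_\pm|u-u^*|+\psi$ and appeal to an unspecified contraction argument for $\psi$. Two issues: first, the contraction is only asserted, not carried out, and the singular denominator $G(u)/\phi$ makes the choice of weighted space and the verification of the contraction constant nontrivial. Second, and more seriously, your uniqueness argument presupposes that every solution with $\phi(u^*)=0$ already has the linear leading behaviour $\phi\sim y_\pm|u-u^*|$; this is precisely what needs to be proved, and the ansatz does not exclude other asymptotics a priori.

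The paper avoids all of this. After the essentially identical substitution $s=\sqrt{1-r^2}$ the equation reads
\[
s' \;=\; \frac{1}{s}\Big(\frac{m}{u}-\frac{\sigma}{cu^m}\Big) + h(u,s),
\]
with $h$ Lipschitz in $s$ near $s=0$. On the relevant one-sided interval ($u>u^*$ for the backward problem, $u<u^*$ for the forward one) the coefficient of $1/s$ has a fixed sign, so $s\mapsto \frac{1}{s}(\cdot)$ is monotone; uniqueness then follows from the classical result for a Lipschitz plus monotone right-hand side, and this argument is insensitive to whether $\tilde u$ equals $u^*$ or not. Existence at $\tilde u=u^*$ is obtained by approximating with $\tilde u_n\to u^*$, $\tilde u_n\neq u^*$, and passing to the limit via Peano's continuous-dependence theorem. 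This is both simpler and complete; I would recommend replacing your contraction sketch with this monotone decomposition.
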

\begin{proof}
The existence problem is addressed by solving the initial value problem for \eqref{u-grafo} with $r(\tilde u) = 1$, being $\tilde u \neq u^*(\sigma)$. To do this we will use {Peano's existence theorem}, and for that we need a continuous  extension of \eqref{u-grafo} to values $r>1$.

In the case $\tilde u > u^*$ we have $r'(\tilde u) <0$ and, therefore, the function $r$ defined on $]\tilde u, \tilde u + \epsilon[$, for some small $\epsilon>0$, maps into $]0,1[$, solving our original problem. This solution can be extended to $]\tilde u, 1[$ and it verifies $r(u) <1$; otherwise we could find a first value $u_0 > \tilde u$ such that $r(u_0)=1$ and $r'(u_0) <0$, which would give us a contradiction.

In a similar way, for  $\tilde u < u^*$ we find a solution of \eqref{u-grafo} on $]\tilde u -\epsilon, \tilde u[$ such that
$r'(\tilde u) >0$, which  can be extended to $]0, \tilde u[$.

The case $\tilde u = u^*(\sigma)$ can be treated by approximation. In fact, taking a sequence $\tilde u_n \to u^*(\sigma)$, $ \tilde u_n \neq u^*(\sigma)$ and depending on either $\tilde{u}_n<u^*$ or $\tilde{u}_n>u^*$, we find a solution on $]0, u^*(\sigma)[$ or on $]u^*(\sigma), 1[$. Note that the approximating sequence has a  partial subsequence which is convergent, via Peano's theorem on continuous dependence with respect to initial conditions and parameters. The limit verifies $r(u) \leq 1$ either on the case of solutions in $]0, u^*(\sigma)[$ or on the case of solutions in $]u^*(\sigma), 1[$. Let us analyze this last case: a solution of the extended problem \eqref{u-grafo} {cannot be equal to 1  on an interval having $u^*(\sigma)$ as its left end, since this is not coherent with the values of the flux defined by \eqref{regu} at the boundary}. On the other hand, the fact that $r(u) <1$ for some value $u$ implies that $r(u) <1$ for greater  values and thus for $u \in ]u^*(\sigma), 1[$.

In every case, the uniqueness follows  from the change of variable
$\sqrt{1-r^2(u)}= s(u)$ leading to the following differential equation:
\begin{eqnarray*}
s^{\prime }
&=&\frac 1s\left( -\frac \sigma {cu^m}+\frac mu\right) -\frac{\sigma \left(
-s^2\right) }{cu^ms\left( \sqrt{1-s^2}+1\right) }-\frac{ms}u+\frac{k(u)}{u^m}%
\frac \nu {c^2} \\
&=&\frac 1s\left( -\frac \sigma {cu^m}+\frac mu\right) +h(u,s),
\end{eqnarray*}
where $h(u,s)$ is a Lipschitz function in the second  variable in a neighborhood of   $(u, s=0)$, and
$s\rightarrow \frac 1s\left( -\frac \sigma {cu^m}+\frac mu\right) $ is a  decreasing function
if $u>u^{*}(\sigma)$, resp. increasing
if $u<u^{*}(\sigma)$.
We conclude by the classical uniqueness results for equations with right hand side given by a Lipschitz part  plus a monotone
part \cite{Hartman}.
\end{proof}

The following result allows to construct Type III orbits which start at any point of $]0, u^*(\sigma)] \times \{1\}$.
\begin{proposition}
For any $u^-\in ]0, u^*(\sigma)]$, there exists a Type III orbit that satisfies \eqref{singular} and such that
\begin{eqnarray}
\lim_{t \to \omega_-}  (u(t), r(t)) = ( u^-,1).
\label{3.1}
\end{eqnarray}
Moreover, this  orbit  is unique up to reparametrizations.
\label{viejo3}
\end{proposition}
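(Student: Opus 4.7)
The plan is to construct the Type III orbit in two stages: first at the level of the graph $u\mapsto r(u)$ via Lemma \ref{viejo}, and then by recovering the $\xi$-parametrization through a quadrature. For each $u^-\in\,]0,u^*(\sigma)]$, Lemma \ref{viejo} produces a unique solution $r\colon\,]0,u^-[\to[0,1]$ of \eqref{u-grafo} with $\lim_{u\to(u^-)^-}r(u)=1$, and the construction moreover guarantees $r(u)<1$ on $]0,u^-[$. Strict positivity $r(u)>0$ is inherited from the flow analysis in the proof of Proposition \ref{ab}: on the regularized system \eqref{regu} the locus $R=0$ is transversally crossed from below (with $K(U)>0$ on $]0,1[$), so a trajectory coming from $R=1$ cannot reach $R=0$ in the interior.

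Next I would recover $\xi$ from the first equation of \eqref{singular}. Separation of variables gives, for any $u_0\in\,]0,u^-[$,
$$\xi(u)-\xi_0=-\frac{\nu}{c}\int_{u_0}^{u}\frac{\sqrt{1-r(s)^2}}{r(s)\,s}\,ds,$$
a strictly decreasing bijection between $u\in\,]0,u^-[$ and $\xi\in\,]\omega_-,\omega_+[$. To see that $\omega_-$ is finite I would check integrability of the integrand at $u^-$. When $u^-<u^*$, evaluating \eqref{u-grafo} at $r=1$ yields $r'(u^-)=\sigma/(c(u^-)^m)-m/u^->0$, so $1-r(u)\sim A(u^--u)$ with $A>0$ and $\sqrt{1-r(u)^2}\sim\sqrt{2A(u^--u)}$ is square-root integrable. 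When $u^-=u^*$ the slope vanishes, but the substitution $s=\sqrt{1-r^2}$ employed in the proof of Lemma \ref{viejo} shows that $s(u)$ decays at least linearly at $u^*$, again yielding integrability. To rule out $\omega_+<+\infty$ I would argue by contradiction: case (3) of Proposition \ref{ab} would force $\lim_{\xi\to\omega_+}(u,r)=(u^+,1)$ with $u^+\in[u^*,1[$, but $u$ is strictly decreasing (since $u'<0$ in the interior) and so $u^+<u^-\leq u^*$, contradicting $u^+\geq u^*$. Case (2) of Proposition \ref{ab} then delivers $\lim_{\xi\to+\infty}(u,r)=(0,r^*(\sigma))$, certifying the orbit as Type III.

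Uniqueness is then inherited from Lemma \ref{viejo}: any Type III orbit satisfying \eqref{3.1} yields a graph $r(u)$ on $]0,u^-[$ (as $u$ is strictly monotone) solving \eqref{u-grafo} with boundary datum $r(u^-)=1$, and Lemma \ref{viejo} pins down that graph; the $\xi$-parametrization is then determined up to a single additive constant, which is exactly uniqueness up to reparametrization. I expect the main obstacle to be the control of $\omega_-$ in the degenerate case $u^-=u^*$, where the linearization of \eqref{u-grafo} at $(u^*,1)$ is degenerate and the sharp decay rate $s(u)=O(u^*-u)$ must be squeezed out of the Lipschitz-plus-monotone decomposition already exploited in the proof of Lemma \ref{viejo}; everything else is a bookkeeping exercise combining that lemma with the global picture given by Proposition \ref{ab}.
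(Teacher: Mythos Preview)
Your approach is correct and matches the paper's: obtain the graph $r(u)$ from Lemma~\ref{viejo}, recover the $\xi$-parametrization by quadrature of \eqref{r-sigma}, and deduce uniqueness from the uniqueness clause of that lemma (the paper merely asserts ``blows up in $\omega_->-\infty$'' and leaves $\omega_+=+\infty$ implicit, so your explicit checks via integrability and Proposition~\ref{ab}(3) are a welcome addition rather than a different route). One simplification: the decay-rate analysis at $u^-$---including the ``degenerate case $u^-=u^*$'' you flag as the main obstacle---is unnecessary, since the integrand $\sqrt{1-r(s)^2}/(r(s)\,s)$ tends to $0$ as $s\to u^-$ (because $r(s)\to1$ and $s\to u^->0$) and is therefore bounded on $[u_0,u^-]$, making finiteness of $\omega_-$ immediate.
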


\begin{proof}
Let $u^- \in ]0, u^*(\sigma)]$. Consider the solution  of \eqref{u-grafo} such that
\begin{equation}
r(u^-)=1 ,
\label{3.2}
\end{equation}
and   let $(r_0, u_0)$ be  a point on the graph of $r$, with $u_0 \in ]0,u^-[$. It is easy to see  that the solution of
\begin{equation} \label{r-sigma}
u' = - { \frac{c}{\nu} \frac{r(u) u}{\sqrt{1-r(u)^2}}}
\end{equation}
verifying $u(t_0)= u_0$, for any given $t_0 \in \R$, supplemented with $r(t) =r(u(t))$, is a solution of \eqref{singular} which   blows up  in $\omega_- > -\infty$ (singular in $u^-$) and  which satisfies \eqref{3.1}. 
We have also that any Type III orbit verifying \eqref{3.1}  is a reparametrization of the above one. Thus,  a solution $(r,u)$ of \eqref{singular}--\eqref{3.1} is a Type III orbit such that $u'(t) <0$ in $]\omega_-, + \infty[$. Therefore, we can  deduce that $u$ is a diffeomorphism with its image, which necessarily is $]0, u^-[$. Then, we can invert it and take $r \circ u^{-1} \, : ]0, u^-[ \to ]0,1[$, which verifies \eqref{u-grafo}--\eqref{3.2}. Hence, $r \circ u^{-1} \equiv r(t)$ and, as a consequence, $u$ is a solution of \eqref{r-sigma}. This allows to assure that $u$ differs from the previous solution  just by a reparametrization.  The same happens with $r$, as it is obtained from $u(t)$.
\end{proof}
%%%%%%%%%%%%%%

%%%%%%%%%%%%%%%
\subsection{Bifurcation from Type I to Type II orbits} 
\label{sec4}
%%%%%%%%%%%%%%%

The goal of this section is to analyze the structure of discontinuous solutions of \eqref{modelo1}, in terms of the special classes of orbits defined in Section \ref{campos1}. Our first aim is to describe the set of values of $\sigma$ for which these singular solutions can be constructed.

\begin{definition}
Let
$$
\Sigma_{I}= \{\sigma\ge 0 \ : \mbox{ the associated  solution of }  \eqref{singular}\!\!-\!\!\eqref{condini}
\mbox{ is a Type I orbit}\}.
$$
 Also, let
$$
\Sigma_{II}= \{\sigma\ge 0 \ : \mbox{ the associated  solution of }  \eqref{singular}\!\!-\!\!\eqref{condini}
\mbox{ is a Type II orbit}\}.
$$
\end{definition}
\noindent
Note that $\Sigma_{II}$ is bounded from above by $mc$.

According to Lemma \ref{lemma4},
for every $\sigma  \in \Sigma_{I}$  the corresponding solution of  \eqref{singular}--\eqref{condini}
derives from a solution $r_\sigma : ]0,1[ \to ]0,1[$ of  (\ref{u-grafo})--(\ref{bc})
 satisfying
$$
\lim_{u \to 0} r_\sigma(u) = r^*.
$$
When $\sigma \in \Sigma_{II}$, we consider the escape point $u^+(\sigma)$  of  \eqref{singular}--\eqref{condini} through $r=1$
as introduced in Definition \ref{tipos12}. Then,
 $r_\sigma : ]u^+(\sigma),1[ \to ]0,1[$  is a solution of  (\ref{u-grafo})--(\ref{bc}).
The function $u^+$ is, therefore, defined from $\Sigma_{II}$ to $]0,1[$ and verifies $u^+(\sigma) \geq u^*(\sigma)$.
The way to recover the solutions of  \eqref{singular}--\eqref{condini}
 from the solutions of  (\ref{u-grafo})--(\ref{bc})
 is to integrate the differential equation \eqref{r-sigma}
 as in Proposition \ref{viejo3}.

Now we show that the orbits which are candidates for representing traveling wave profiles are ordered with respect to $\sigma$.
%%%%%%%%%%%%%
\begin{lemma}
\label{compara}
If $\sigma_1 < \sigma_2$, then $r_{\sigma_1}(u)>r_{\sigma_2}(u)$ in their common domain of definition.
\end{lemma}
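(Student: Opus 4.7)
The plan has two stages: first I show that the inequality holds in a left neighborhood of $u = 1$, then I argue by contradiction at the supposed rightmost crossing point of the two graphs, exploiting the fact that the $\sigma$-dependence of \eqref{u-grafo} is confined to the single explicit term $\sigma/(cu^m)$.

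For the first stage I invoke Lemma \ref{lemma4} and Remark \ref{decre}: both $r_{\sigma_1}$ and $r_{\sigma_2}$ satisfy $r_{\sigma_i}(1^-)=0$ with $\lim_{u\to 1^-} r_{\sigma_i}'(u) = \lambda_{\sigma_i}$, and the map $\sigma \mapsto \lambda_\sigma$ is strictly increasing, so $\lambda_{\sigma_1}<\lambda_{\sigma_2}<0$. A first-order Taylor expansion at $u=1$ then yields
\[
r_{\sigma_1}(u) - r_{\sigma_2}(u) = (\lambda_{\sigma_2}-\lambda_{\sigma_1})(1-u)+o(1-u)\qquad\text{as } u\to 1^-,
\]
whose leading term is strictly positive, so $r_{\sigma_1}>r_{\sigma_2}$ on some interval $]1-\delta,1[$.

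For the second stage I suppose for contradiction that the two graphs meet somewhere in the common domain, and let $u_0<1$ be the supremum of such coincidence points. By continuity $r_{\sigma_1}(u_0)=r_{\sigma_2}(u_0)=:r_0$, and $r_{\sigma_1}>r_{\sigma_2}$ throughout $]u_0,1[$. Since both functions take values in $]0,1[$ on the open parts of their domains, the contact point is regular: $r_0\in]0,1[$ and $u_0>0$, so \eqref{u-grafo} is smoothly defined at $(u_0,r_0)$. Every term on the right-hand side of \eqref{u-grafo} other than the $\sigma$-contribution depends only on $(u,r)$; substituting $(u_0,r_0)$ into \eqref{u-grafo} for both orbits and subtracting therefore gives
\[
r_{\sigma_1}'(u_0) - r_{\sigma_2}'(u_0)=\frac{\sigma_1-\sigma_2}{c\,u_0^m}<0.
\]
On the other hand, $\Delta:=r_{\sigma_1}-r_{\sigma_2}$ vanishes at $u_0$ and is strictly positive immediately to its right, which forces $\Delta'(u_0)\ge 0$; this contradicts the previous display, so no such $u_0$ exists and the strict inequality propagates throughout the common domain. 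The only mildly delicate point is verifying that the contact occurs at a regular point of \eqref{u-grafo} (so that the $(u,r)$-terms cancel cleanly), but this is automatic from the definition of the common open domain.
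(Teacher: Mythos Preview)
Your proof is correct and follows essentially the same approach as the paper's: both establish the strict inequality near $u=1$ from the monotonicity of the entrance slope $\lambda_\sigma$ (Remark \ref{decre}), then argue by contradiction at the rightmost crossing point using that the only $\sigma$-dependent term in \eqref{u-grafo} is $\sigma/(cu^m)$. You have simply written out in more detail what the paper compresses into three sentences.
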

\begin{proof}
Recall that $\lambda_\sigma$ increases strictly as $\sigma$ increases (Remark \ref{decre}). Then $r_{\sigma_1}(u) > r_{\sigma_2}(u)$ in a neighborhood of $u=1$. If our thesis is false then there exists a first value $0<\tilde{u}<1$ such that $r_{\sigma_1}(\tilde{u})= r_{\sigma_2}(\tilde{u})$. Then, the hypothesis $\sigma_1 < \sigma_2$ implies by  \eqref{u-grafo} that $\frac{dr_{\sigma_1}}{du}(\tilde{u})<\frac{dr_{\sigma_2}}{du}(\tilde{u})$, which constitutes a contradiction.
\end{proof}

The above result implies that $\Sigma_{I}$ and  $\Sigma_{II}$ are intervals, i.e., given a value of $\sigma$ such that the corresponding orbit is  a Type I  (resp. Type II) orbit, then this is also the case for upper (resp. lower) values of $\sigma$; we have also that $u^+(\sigma)$ is a decreasing function of $\sigma$.  Moreover, $\Sigma_{I}  \cup \Sigma_{II} =[0,+\infty[$ (it is in fact a Dedekind cut) and in the sense of sets $\Sigma_{II}  < \Sigma_{I}$, i.e., if $\sigma \in \Sigma_{II}$, then $\Sigma_{II}$ contains all values below this $\sigma$, $[0,  \sigma] \subset \Sigma_{II}$ and  if $\sigma \in \Sigma_{I}$, then $\Sigma_{I}$ contains all values above this $\sigma$, $[\sigma,  +\infty[ \subset \Sigma_{I}$; moreover all the elements of $\Sigma_{II}$  are below those of $\Sigma_{I}$.
Note that at this stage we have not ruled out the possibility of having $\Sigma_{II}=\emptyset$ yet. The following  result takes  care of this issue.
\begin{proposition}
\label{zero}
The {maximal} orbit associated with $\sigma=0$ verifying \eqref{u-grafo}--\eqref{bc} exits the phase diagram by a point $(u^+(0),1)$ with $u^+(0)>0$.\end{proposition}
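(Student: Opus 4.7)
The plan is to work within the graph formulation provided by Lemma \ref{lemma4}. The unique orbit issuing from $(1,0)$ at $\sigma=0$ produced by Theorem \ref{caaampos} becomes a function $r(u)$ solving \eqref{u-grafo} with $r(1)=0$ and $r'(1)=\lambda_0<0$, defined on some maximal interval $(u^+(0),1)$ on which $r\in(0,1)$. With $\sigma=0$ both terms in the right-hand side of \eqref{u-grafo} are strictly negative, so $r$ is strictly decreasing in $u$; equivalently, $r$ grows from $0$ as $u$ falls from $1$. The goal is to show $u^+(0)>0$ and $\lim_{u\downarrow u^+(0)}r(u)=1$.

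The heart of the argument is an elementary a priori estimate. Discarding the (non-positive) second summand of the right-hand side of \eqref{u-grafo} at $\sigma=0$ produces
\[
\left|\frac{dr}{du}\right|=\frac{mr}{u}+\frac{K(u)\,\nu\,\sqrt{1-r^{2}}}{c^{2}u^{m}\,r}\;\ge\;\frac{mr}{u},
\]
equivalently $\bigl|d(\ln u)/dr\bigr|\le 1/(mr)$. Integrating between a reference value $r_{0}\in (0,1)$ reached by the orbit (such a value exists by the local analysis around $(1,0)$ given in Theorem \ref{caaampos}) and any larger $r$ on the orbit yields
\[
u(r)\;\ge\;u(r_{0})\Bigl(\frac{r_{0}}{r}\Bigr)^{1/m}\;\ge\;u(r_{0})\,r_{0}^{1/m}\;>\;0,
\]
so $u$ stays uniformly bounded below by a strictly positive constant along the whole orbit, regardless of how close $r$ comes to $1$.

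Both alternatives to the desired conclusion can now be ruled out. If $u^+(0)=0$, then $u$ must decrease to $0$ along the orbit, in contradiction with the uniform positive lower bound just derived; this in particular precludes convergence to the degenerate limit $(0,1)$, which is the form that $(0,r^{*})$ takes when $\sigma=0$ (the scenario anticipated in the Remark following Proposition \ref{ab}). If instead $u^+(0)>0$ but $r^{+}:=\lim_{u\downarrow u^+(0)}r(u)<1$, then \eqref{u-grafo} is regular near $(u^+(0),r^{+})$ and the Picard--Lindel\"of machinery underpinning Lemma \ref{viejo} extends $r$ past $u^+(0)$, contradicting maximality. Hence $u^+(0)>0$ and the exit takes place at $(u^+(0),1)$, as claimed. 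The only subtle point, already flagged in the Remark preceding the proposition, is that the usual hyperbolic analysis at $(0,r^{*})$ degenerates when $\sigma=0$; the a priori bound above is precisely what substitutes for it.
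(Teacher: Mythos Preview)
Your proof is correct and rests on the same core observation as the paper: writing \eqref{u-grafo} at $\sigma=0$ and noting that the right-hand side is bounded above by $-mr/u$ is equivalent to the paper's identity $\frac{d}{du}\bigl(u^{m}r(u)\bigr)\le 0$. The paper then argues that if $r$ were defined on all of $]0,1[$, the monotone quantity $u^{m}r(u)$ would vanish at both endpoints and hence be identically zero, contradicting $r'(1)=\lambda_{0}<0$; you instead integrate the same inequality to obtain the explicit lower bound $u\ge u(r_{0})\,r_{0}^{1/m}$ and read off $u^{+}(0)>0$ directly. These are two packagings of one idea, and both dispense with the maximality alternative $r^{+}<1$ by the same regularity/extension step (for which ordinary Picard--Lindel\"of at an interior point already suffices; the citation of Lemma~\ref{viejo} is unnecessary there).
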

\begin{proof}
The equation \eqref{u-grafo} can be recast after multiplication by $u^m$ as
$$
u^m r'(u) +m u^{m-1}r(u) = -\frac{\nu}{c^2} K(u) \frac{\sqrt{1-r^2}}{r}.
$$
This implies
$$
\frac{d}{du}(u^m r(u)) \le 0.
$$
Assume now that $r(u)$ is defined for $u\in ]0,1[$. Then $\lim_{u\to 0} u^m r(u) =0$ and thus $u^m r(u)$ is identically equal to zero, which contradicts the fact that the slope at $u=1$ is known to be strictly negative. This shows that $r(u)$ is defined only in an interval $]u^+(0),1[$ and that $\lim_{u \to u^+(0)}r(u)=1$.
\end{proof}
The previous result shows that $\Sigma_{II}$ is not empty, at least it contains the value zero. In the next Proposition we prove that it contains a non-trivial interval of values and we characterize its supremum.

%%%%%%%%%%%
%%%%%%%%%%
\begin{proposition}
\label{s1}
The value $\sigma_{smooth}=\sup \{\sigma: \sigma \in \Sigma_{II}Ê\}$ verifies  $mc>\sigma_{smooth}>0$.  Moreover:
\begin{enumerate}

\item any maximal solution satisfying \eqref{singular}--\eqref{condini} with $\sigma > \sigma_{smooth}$ is a Type I orbit,

\item any maximal solution satisfying \eqref{singular}--\eqref{condini}  with $\sigma \le \sigma_{smooth}$ is a Type II orbit.

\end{enumerate}
Furthermore, $\sigma_{smooth}$ is the unique value of $\sigma$ having the property that the associated solution, which is a   Type II orbit, terminates at the point $(u^*(\sigma),1).$ Then $u^+(\sigma_{smooth}) = u^* (\sigma_{smooth})$.

\end{proposition}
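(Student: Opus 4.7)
The plan is to exploit the constraint $u^+(\sigma) \geq u^*(\sigma)$ which holds on $\Sigma_{II}$, combined with the strict monotonicity of $u^*(\sigma) = (\sigma/(cm))^{1/(m-1)}$ in $\sigma$ (with $u^*(0) = 0$ and $u^*(mc) = 1$) and the non-increasing character of $u^+$ on $\Sigma_{II}$ (a direct consequence of Lemma \ref{compara}). Heuristically, $\sigma_{smooth}$ should be exactly the value at which the decreasing curve $u^+(\sigma)$ meets the increasing curve $u^*(\sigma)$; crossing this threshold would force $u^+(\sigma) < u^*(\sigma)$, which is incompatible with membership in $\Sigma_{II}$.

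To get started I would establish the strict bounds $0 < \sigma_{smooth} < mc$. The upper bound is almost immediate: if $\sigma \in \Sigma_{II}$ then $u^*(\sigma) \leq u^+(\sigma) < 1$, forcing $\sigma < mc$. For the lower bound, Proposition \ref{zero} gives $u^+(0) > 0 = u^*(0)$, so in the regular system \eqref{regu} the unstable trajectory from the hyperbolic equilibrium $(1,0)$ crosses the line $R=1$ transversally (the $R$-component of the vector field at $R=1$, $U > u^*$ is strictly positive by Step 3 of the proof of Proposition \ref{ab}). Continuous dependence on $\sigma$ of this unstable manifold, guaranteed by the smoothness of \eqref{regu}, shows that the transversal crossing persists for $\sigma$ in a right-neighborhood of $0$, hence $\sigma \in \Sigma_{II}$ for small $\sigma > 0$.

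The heart of the argument is to show that $\sigma_{smooth} \in \Sigma_{II}$ and that $u^+(\sigma_{smooth}) = u^*(\sigma_{smooth})$. For the first, I would take a sequence $\sigma_n \nearrow \sigma_{smooth}$ in $\Sigma_{II}$ and invoke continuous dependence on parameters in \eqref{regu}: the unstable trajectories from $(1,0)$ converge uniformly on compact time intervals to the unstable trajectory at $\sigma_{smooth}$, while the exit abscissae $u^+(\sigma_n)$ are monotone and converge to some $u^+_\star$. One then verifies, using the uniform convergence, that the limit trajectory itself exits through $R=1$ at $(u^+_\star, 1)$, so $\sigma_{smooth} \in \Sigma_{II}$ with $u^+(\sigma_{smooth}) = u^+_\star$. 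Next, assume by contradiction that $u^+(\sigma_{smooth}) > u^*(\sigma_{smooth})$; then at the exit point the $R$-component of the vector field of \eqref{regu} is strictly positive, giving a transversal crossing through $R=1$. Continuous dependence on $\sigma$ then yields the same transversal crossing for $\sigma$ slightly above $\sigma_{smooth}$, meaning that such $\sigma$ would lie in $\Sigma_{II}$, in contradiction with the definition of $\sigma_{smooth}$ as a supremum.

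Parts (1) and (2) follow from $\sigma_{smooth} \in \Sigma_{II}$ together with the Dedekind cut structure of $\Sigma_{I} \cup \Sigma_{II}$ established just before the proposition. Uniqueness of $\sigma_{smooth}$ as the value with $u^+(\sigma) = u^*(\sigma)$ follows from the strict monotonicity of $u^*$: for any $\sigma < \sigma_{smooth}$ one has $u^*(\sigma) < u^*(\sigma_{smooth}) = u^+(\sigma_{smooth}) \leq u^+(\sigma)$, so the inequality remains strict below $\sigma_{smooth}$. The main obstacle I anticipate is handling continuous dependence near the boundary $r = 1$, where the original system \eqref{singular} is singular; the cleanest strategy is to perform every limit passage inside the regularized system \eqref{regu}, which is smooth on $[0,1]\times[0,1]$ and extends beyond, and only afterwards to translate the conclusion back into a statement about $r_\sigma$ and $u^+(\sigma)$.
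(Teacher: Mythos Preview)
Your proposal is correct and shares the same conceptual skeleton as the paper's proof: both hinge on the monotonicity $u^+(\sigma)$ decreasing, $u^*(\sigma)$ increasing, the a priori bound $u^+ \geq u^*$ on $\Sigma_{II}$, and a continuous-dependence argument to pin down $\sigma_{smooth}$ as the unique value where the two curves meet. The technical routes diverge, however. The paper does not work in the regularized planar system \eqref{regu}; instead it passes to the graph formulation \eqref{u-grafo} and proves an explicit continuity lemma (Lemma~\ref{cont}) for the maps $\sigma \mapsto r_\sigma(\cdot)$, obtaining uniform bounds on $r_\sigma'$ from \eqref{u-grafo} and invoking Ascoli together with a careful $\varepsilon$--$M\varepsilon$ estimate at the endpoint to show $\lim_{u \to \alpha} \tilde r(u) = 1$. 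This yields both $\sigma_{smooth} \in \Sigma_{II}$ (increasing sequences) and $u^+(\sigma_{smooth}) = u^*(\sigma_{smooth})$ (decreasing sequences from $\Sigma_I$). Your alternative, arguing via continuous dependence of the unstable manifold in \eqref{regu} and persistence of transversal crossings of $\{R=1\}$, is a legitimate and perhaps more geometric substitute; it makes the contradiction for $u^+(\sigma_{smooth}) > u^*(\sigma_{smooth})$ especially clean. The one place your sketch is thinner than the paper's treatment is precisely the step you flag: showing that the limit trajectory for $\sigma_n \nearrow \sigma_{smooth}$ still reaches $R=1$ (rather than becoming Type~I) requires more than uniform convergence on compact time intervals, since the crossing may degenerate to a tangency; the graph formulation sidesteps time-parametrization issues here and gives the boundary estimate directly.
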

\begin{proof}
We notice that for $\sigma>\sigma_{smooth}$ we get Type I orbits, while for $\sigma < \sigma_{smooth}$ we get Type II orbits. By Remark \ref{prop} we also know that $\sigma_{smooth}< mc$.
It only remains to prove that $\sigma_{smooth} >0 $, that this value belongs to $\Sigma_{II}$, and to verify that $u^+(\sigma_{smooth})=u^*(\sigma_{smooth})$.
These claims follow from the continuous dependence of solutions of \eqref{singular}--\eqref{condini} w.r.t $\sigma$.  More precisely,

 \begin{lemma}
\label{cont}
Consider the maximal solutions of \eqref{u-grafo} extended to the right end by means of $r_{\sigma_n}(1)=0$. The following assertions are satisfied:
\begin{enumerate}
\item  Let $\{\sigma_n\}_{n\geq 0}$ be a monotonically decreasing sequence such that $\sigma_n\rightarrow \sigma_\infty$, with  $\sigma_{\infty} \in \Sigma_{II}$.
Then the sequence $\{r_{\sigma_n}\}$ of  solutions to (\ref{u-grafo})--(\ref{bc})  converges uniformly on compact sets of  $]u^+(\sigma_{\infty}),1]$ to $r_{\sigma_\infty}$. Moreover, if
$\sigma_n \in \Sigma_{I}$, then  $u^+(\sigma_{\infty})=u^*(\sigma_{\infty})$,  and if $\sigma_n \in \Sigma_{II}$ for advanced $n$, then $u^+(\sigma_{n}) \to u^+(\sigma_{\infty})$.
\item
 Let $\{\sigma_n\}_{n\geq 0}$ be a monotonically increasing sequence such that $\sigma_n\rightarrow \sigma_\infty$, with $\sigma_n \in \Sigma_{II}$.
 Then, $\sigma_\infty \in \Sigma_{II}$  and $u^+(\sigma_{n}) \to u^+(\sigma_{\infty})$. In addition, the sequence $r_{\sigma_n} : ]u^+(\sigma_n) , 1] \to ]0,1]$ converges on compact sets of $]u^+(\sigma_\infty) , 1]$ to $r_{\sigma_\infty}$.

\end{enumerate}
\end{lemma}
\begin{proof}
As we pointed in the statement of the lemma, in this proof we will consider the functions $r_\sigma$ to be extended by continuity to their value at $u=1$, even  if the differential equation \eqref{u-grafo} is defined only on the open interval, being  singular at $ r = 0 $. We split the proof into two steps.

{\it Step 1.} Let us start by proving the second assertion. We stress that, due to Lemma \ref{compara} and to the monotonicity of $\sigma_n$, the sequence $r_{\sigma_n}$ has increasing intervals of definition $]u^+(\sigma_n), 1]$. Define
$
\alpha = \inf_{n \in \NN} \left\{ u^+(\sigma_n) \right\};
$ our aim is to prove that $u^+(\sigma_\infty)$ is well defined and coincides with $\alpha$.

As a consequence of Lemma \ref{compara} we have
\begin{equation}\label{jj1}
\left\{
\begin{array}{ll}
\mbox{either} & r_{\sigma_\infty}(u) \mbox{ is defined on } u \in ]0,1] \\ & \\
\mbox{or} & u^+(\sigma_\infty) \leq \alpha.
\end{array}
\right.
\end{equation}
Moreover, given $u \in ]\alpha,1]$, from  Lemma \ref{compara} we deduce that the value $r_{\sigma_n}(u)$ is defined for $n \in \NN$ large enough. Furthermore, these values constitute a decreasing sequence. Then, we  define
\begin{equation*}\label{jj2}
\left\{
\begin{array}{l}
\tilde  r \, : \, ]\alpha, 1] \to ]0,1] \\  \\
\tilde  r (u) = \displaystyle \lim_{n \to \infty} r_{\sigma_n}(u).
\end{array}
\right.
\end{equation*}
The alternative \eqref{jj1} implies that the domain of any $ r_{\sigma_n}$ is contained in that of $\tilde  r$. Using again Lemma \ref{compara} we obtain
\begin{equation}\label{jj3}
r_{\sigma_\infty}(u) \leq \tilde r(u), \quad \forall u \in ]\alpha,1] .
\end{equation}
Now,  the inequality $u^+(\sigma_n) \geq u^*(\sigma_n)$ leads to
\begin{equation}\label{jj4}
\alpha \geq u^*(\sigma_\infty) > 0.
\end{equation}
We also have the following estimate which is independent of $n$
\begin{equation}\label{jj5}
\displaystyle \frac{K(u)}{r_{\sigma_n(u)}} \ \leq \ \sup_{u \in ]\alpha, 1[}\ \ \frac{K(u)}{r_{\sigma_\infty}(u)}, \qquad \forall u \in ]u^+(\sigma_n),1[.
\end{equation}
Note that the function $u \in ]\alpha, 1[ \mapsto \frac{K(u)}{r_{\sigma_\infty}(u)}$ is bounded at both endpoints --recall that $K'(1)$ exists-- and thus on its whole domain.

Combining \eqref{jj4}, \eqref{jj5} and \eqref{u-grafo} we deduce
\begin{equation}\label{jj6}
|r'_{\sigma_n}(u) | \leq M,  \quad \forall u \in ]u^+(\sigma_n),1[,
\end{equation}
where $M$ is independent of $n$.
Then, Ascoli's theorem ensures that $r_{\sigma_n}$ converges to $\tilde r$ uniformly on compact sets of $]\alpha, 1]$; the bound \eqref{jj6} is also valid at the boundary once all the objects are properly extended. In particular, $\tilde r$ is a continuous function verifying \eqref{bc}.
At the same time, \eqref{jj3} ensures  that $r'_{\sigma_n}$ also converges uniformly on compact sets of $]\alpha,1[$. Hence, $\tilde r$ satisfies \eqref{u-grafo} and thanks to Lemma \ref{lemma4} we have $\tilde r \equiv r_{\sigma_\infty}$ on $]\alpha, 1]$.

Finally, let us prove that $\alpha = u^+(\sigma_\infty)$. For that, it suffices  to check that
\begin{equation}\label{jj7}
\lim_{u \to \alpha} \tilde r(u) =1.
\end{equation}
To do this, let $\epsilon > 0$ and let $u \in ]\alpha, \alpha + \epsilon[$. Then for  $n$ large enough
we have that $| r_{\sigma_n}(u) - \tilde r(u) | < \epsilon$. Since also $u^+(\sigma_n) > \alpha$ for $n$ large enough, we have that $| u - u^+ (\sigma_n)| \leq \epsilon$. Then, the mean value theorem and estimate \eqref{jj6} yield that $|r_{\sigma_n}(u) -1| \leq M\epsilon$. Since $u\in ]\alpha, \alpha + \epsilon[$ we find $|\tilde r (u) -1| \leq \epsilon + M\epsilon$, and \eqref{jj7} follows. This finishes the proof of the second assertion.

{\it Step 2.} We are now concerned with the proof of the first assertion.  Due to Lemma \ref{compara} the sequence of functions $\{ r_{\sigma_n} \}_{n \in \NN}$ is defined on a common interval: either  $]\alpha , 1]$ with  $\alpha = \sup_{\sigma_n \in \Sigma_{II}} \left\{ u^+(\sigma_n)\right\} >0 $ or $]0,1]$ when $\sigma_n \in\Sigma_{I}$ for any $n\in \NN$.

In the first case, the same argument as in the previous step leads to
\begin{equation*} \label{jj8}
\frac{K(u)}{r_{\sigma_n}(u)} \leq  \sup_{u \in ]\alpha, 1[} \frac{K(u)}{r_{\sigma_1}(u)},
\end{equation*}
which allows to prove the uniform convergence of $r_{\sigma_n}$  to a function $\tilde r$ on  $]\alpha, 1]$. Note that the uniform convergence on compact sets of the sequence $r'_{\sigma_n}$ allows to deduce that   $\tilde r$ verifies \eqref{u-grafo}; this is possible because $\tilde r(u) \geq r_{\sigma_1}(u) >0$, $\forall u\in ]\alpha,1[$. This function $\tilde r$ coincides (because it is a pointwise limit) with $r_{\sigma_\infty}$ on $]u^+(\sigma_\infty), 1[ $ since in this case Lemma \ref{compara} ensures that
\begin{equation} \label{jj9}
\alpha \leq u^+ (\sigma_\infty) .
\end{equation}
The case $\alpha \neq u^+ (\sigma_\infty)$ can be excluded by a contradiction argument. In fact, $\tilde r$ is a $C^1$ function such that $\tilde r(u^+(\sigma_\infty)) =1$ and $\tilde{r}\le 1$. Thus, this value is a maximum and, therefore, $\tilde r'(u^+(\sigma_\infty)) =0$. Taking into account \eqref{u-grafo} and
\[
\tilde r'(u^+(\sigma_\infty)) = \lim_{u \to u^+(\sigma_\infty)} \tilde r'(u)
\]
we find $u^+(\sigma_\infty) = u^*(\sigma_\infty)$. Therefore,
$\lim_{n \to \infty} u^*(\sigma_n) \leq \alpha= \lim_{n \to \infty} u^+(\sigma_n) <  u^+(\sigma_\infty) = u^*(\sigma_\infty),
$
which is in contradiction with \eqref{jj9}.

It remains to study the case $\sigma_n \in \Sigma_{I}$, for any $n \in \NN$. Then, we use a value $\alpha \in ]0, u^+(\sigma_\infty)[$ to argue in a similar way as we did above and we finish once we find that $u^+(\sigma_\infty) = u^*(\sigma_\infty)$.
\end{proof}
\noindent
{\it  (Proof of Proposition \ref{s1}, continued)}

 Assertion {\it 1)} in Lemma \ref{cont} together with Proposition \ref{zero} prove that $\sigma_{smooth} > 0$,
 because  $u^*(\sigma=0)<u^+(\sigma=0)$. Then we can apply
 Lemma \ref{cont}.{\it (2)} to any sequence that converges to $\sigma_{smooth}$ and we conclude that this value belongs to $\Sigma_{II}$.
 Finally,  $\sigma_{smooth}$ is the only value $\sigma$ such that $u^*(\sigma)=u^+(\sigma)$. This follows from  Lemma \ref{cont}.{\it (1)} and the facts that $\sigma \mapsto u^+(\sigma)$ is strictly decreasing (see the paragraph after Lemma \ref{compara}) and  $\sigma \mapsto u^*(\sigma)$ is strictly increasing.
 \end{proof}
At this point we can use
Proposition \ref{viejo3} to show that the orbit associated to $\sigma_{smooth}$ can be extended as a continuous curve further to the right (matching with a Type III orbit). More is true, as we show in our next result, which is paramount in order to characterize completely the discontinuous traveling wave solutions of \eqref{modelo1}.

\begin{proposition}
\label{propdisc}
There exists a value $0 < \sigma_{ent} < \sigma_{smooth}$ such that the
following assertions hold true in the range $ \sigma_{ent} \le \sigma
\le \sigma_{smooth}$:
\begin{enumerate}

\item Any Type II orbit can be extended to the whole $\R$ matching it
with a Type III orbit.

\item  There is only one way to perform the aforementioned matching. It is given by the following formula:
\begin{equation}
\label{sentcond}
\sigma = c \frac{(u^+(\sigma))^m -(u^-(\sigma))^m}{u^+(\sigma) -
u^-(\sigma)}.
\end{equation}
Here $(u^+(\sigma),1)$ is the arrival point for the Type II orbit and
$(u^-(\sigma),1)$ is the departure point for the Type III orbit.

\item Moreover $\sigma \mapsto u^-(\sigma)$ is a  continuous, strictly
increasing mapping, and the value $\sigma_{ent}$ is defined as the value
of $\sigma \geq 0$ for which
$$
\lim_{\sigma\to \sigma_{ent}}u^-(\sigma)=0.
$$
In addition, we have
$$
\lim_{\sigma\to
\sigma_{smooth}}u^-(\sigma)=u^+(\sigma_{smooth})=u^*(\sigma_{smooth}).
$$
\end{enumerate}
\end{proposition}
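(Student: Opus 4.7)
The plan is to reduce the matching problem to a scalar equation by using Rankine--Hugoniot. Fix $\sigma \in \,]0,\sigma_{smooth}]$ and let $u^+(\sigma)$ be the exit point of the Type II orbit supplied by Proposition \ref{s1}. Given any $u^- \in\, ]0, u^*(\sigma)]$, Proposition \ref{viejo3} produces a unique (up to reparametrization) Type III orbit leaving from $(u^-,1)$, so a continuous patching on $\R$ amounts to choosing $u^-$ so that the jump from $u^+(\sigma)$ to $u^-$ travels at speed $\sigma$. Since both the Type II and the Type III orbits arrive at $r=1$, the geometric/entropy conditions \eqref{verticalA1}--\eqref{verticalA2} are automatic, and the only residual constraint is the velocity formula \eqref{eq:speedp}. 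Introduce therefore
\[
  \phi_\sigma(u) \;=\; c\,\frac{(u^+(\sigma))^m - u^m}{u^+(\sigma) - u}, \qquad u \in\, ]0,u^+(\sigma)[,
\]
so that the matching equation \eqref{sentcond} reads $\phi_\sigma(u^-) = \sigma$.

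Next I analyze $\phi_\sigma$ on $\,]0,u^*(\sigma)]$. Strict convexity of $t\mapsto t^m$ (as $m>1$) makes the secant slope $\phi_\sigma$ continuous and strictly increasing in $u$, with
\[
  \lim_{u\to 0^+}\phi_\sigma(u) \;=\; c\,(u^+(\sigma))^{m-1}, \qquad \phi_\sigma(u^*(\sigma)) \;\ge\; c\,m\,(u^*(\sigma))^{m-1} \;=\; \sigma,
\]
where the second inequality is strict whenever $u^*(\sigma) < u^+(\sigma)$ (tangent-below-secant) and becomes equality exactly when $u^*(\sigma)=u^+(\sigma)$, i.e.\ at $\sigma=\sigma_{smooth}$. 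Thus the IVT, combined with strict monotonicity, yields a unique $u^-(\sigma)\in \,]0,u^*(\sigma)]$ solving $\phi_\sigma(u^-) = \sigma$ \emph{provided} that $c\,(u^+(\sigma))^{m-1} < \sigma$.

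To locate the threshold, set $\psi(\sigma) := c\,(u^+(\sigma))^{m-1} - \sigma$ on $[0,\sigma_{smooth}]$. By Lemma \ref{cont} and the comments after Lemma \ref{compara}, $\sigma\mapsto u^+(\sigma)$ is continuous and strictly decreasing on this interval, so $\psi$ is continuous and strictly decreasing. Proposition \ref{zero} gives $u^+(0)>0$ and hence $\psi(0)>0$, whereas $\psi(\sigma_{smooth}) = c\,(u^*(\sigma_{smooth}))^{m-1} - \sigma_{smooth} = (1/m-1)\sigma_{smooth} < 0$. This produces a unique $\sigma_{ent}\in\,]0,\sigma_{smooth}[$ with $\psi(\sigma_{ent})=0$, and the previous paragraph then delivers assertions (1) and (2) for every $\sigma\in\,]\sigma_{ent},\sigma_{smooth}]$. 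Uniqueness of the patching is inherited from uniqueness of the Type III orbit departing from $(u^-,1)$ (Proposition \ref{viejo3}) and from the strict monotonicity of $\phi_\sigma$.

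Finally, the qualitative properties of $u^-(\sigma)$. For $\sigma_1<\sigma_2$ in $[\sigma_{ent},\sigma_{smooth}]$ the strict decrease of $u^+$ and the monotonicity of the secant slope in the upper endpoint give $\phi_{\sigma_1}(u^-(\sigma_2)) > \phi_{\sigma_2}(u^-(\sigma_2)) = \sigma_2 > \sigma_1$; by monotonicity of $\phi_{\sigma_1}$ this forces $u^-(\sigma_1) < u^-(\sigma_2)$, whence strict increase of $\sigma\mapsto u^-(\sigma)$. Continuity follows from continuity of the two-variable map $(\sigma,u)\mapsto \phi_\sigma(u)$ (itself a consequence of Lemma \ref{cont}) together with the implicit characterization $\phi_\sigma(u^-(\sigma))=\sigma$ and the strict monotonicity just established. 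The boundary limits come directly from the endpoint analysis: as $\sigma\downarrow\sigma_{ent}$ the value $\phi_\sigma(0^+)$ approaches $\sigma$ and forces $u^-(\sigma)\to 0$; as $\sigma\uparrow\sigma_{smooth}$ the identity $u^+(\sigma_{smooth})=u^*(\sigma_{smooth})$ combined with $\phi_{\sigma_{smooth}}(u^*(\sigma_{smooth}))=\sigma_{smooth}$ and uniqueness yields $u^-(\sigma)\to u^*(\sigma_{smooth})=u^+(\sigma_{smooth})$. The main technical point to guard against is that the monotonicity information about $u^+(\sigma)$ is only strict on $]0,\sigma_{smooth}]$; beyond that, all arguments are convexity-plus-IVT.
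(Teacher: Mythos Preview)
Your proof is correct and follows the same core idea as the paper: both reduce the matching to the scalar Rankine--Hugoniot equation and analyze the secant-slope function $u\mapsto c\,\frac{(u^+)^m-u^m}{u^+-u}$ (the paper calls it $\psi(u^+,x)$, you call it $\phi_\sigma$), exploiting its strict monotonicity in each argument.

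The one noteworthy difference is in how $\sigma_{ent}$ is pinned down. The paper's Lemma~\ref{luegotellamo} only establishes solvability of \eqref{sentcond} on a one-sided neighborhood $]\sigma_{smooth}-\epsilon,\sigma_{smooth}]$ and then implicitly defines $\sigma_{ent}$ through the limiting condition $u^-(\sigma)\to 0$, together with the a priori bound $\sigma>\sigma_{smooth}/m$. Your argument is sharper: by recognizing that $\psi(\sigma)=c\,(u^+(\sigma))^{m-1}-\sigma$ is continuous and strictly decreasing on $[0,\sigma_{smooth}]$ with a sign change, you produce $\sigma_{ent}$ directly as its unique zero and obtain solvability on the full interval $]\sigma_{ent},\sigma_{smooth}]$ in one stroke. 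This buys you a cleaner global picture and makes the continuity and endpoint statements for $u^-(\sigma)$ immediate, whereas the paper assembles them from the local result plus monotonicity.
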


The proof of this result is just a consequence of the previous ideas, together with Proposition \ref{thm:interpEC2} and the following statement.

\begin{lemma}
\label{luegotellamo}
Let $\sigma \le \sigma_{smooth}$. Then, if \eqref{sentcond} is fulfilled, there must hold that $\sigma>\sigma_{smooth}/m$. Whenever \eqref{sentcond} holds, the pair $(u^+(\sigma),u^-(\sigma))$ is unique and the mapping $\sigma \to u^-(\sigma)$ is strictly increasing. Finally, \eqref{sentcond} holds at least for a neighborhood $]\sigma_{smooth}-\epsilon,\sigma_{smooth}]$ of $\sigma_{smooth}$.
\end{lemma}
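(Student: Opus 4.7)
The strategy is to recast \eqref{sentcond} as a level-set equation for the strictly convex auxiliary function $h_\sigma(x) := cx^m - \sigma x$, whose unique minimum lies at $x = u^*(\sigma)$ since $h'_\sigma(u^*) = 0$; this makes \eqref{sentcond} equivalent to $h_\sigma(u^+(\sigma)) = h_\sigma(u^-)$. Applying the mean value theorem to $x \mapsto x^m$ on the chord appearing in \eqref{sentcond} yields $\sigma = cm\xi^{m-1}$ for some $\xi \in \,]u^-, u^+(\sigma)[$, so $\xi = u^*(\sigma)$ and therefore $u^- < u^*(\sigma) < u^+(\sigma)$. Since $h_\sigma$ is strictly decreasing on $[0, u^*(\sigma)]$, the equation $h_\sigma(u^-) = h_\sigma(u^+(\sigma))$ admits at most one solution there, which proves uniqueness of the pair $(u^+(\sigma), u^-(\sigma))$.

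For the lower bound $\sigma > \sigma_{smooth}/m$, I would introduce $S(a, b) := c(a^m - b^m)/(a - b)$ for $a > b \geq 0$ and verify by direct computation that $\partial_a S > 0$ and $\partial_b S > 0$ throughout this region (each partial-derivative numerator vanishes on the diagonal $a=b$ and is strictly signed off it, as one confirms by a further differentiation). Monotonicity of $S$ in its second argument gives $\sigma = S(u^+(\sigma), u^-(\sigma)) \geq S(u^+(\sigma), 0) = c\,u^+(\sigma)^{m-1}$, and combining with $u^+(\sigma) \geq u^+(\sigma_{smooth}) = u^*(\sigma_{smooth})$ (non-increasing character of $u^+$ established after Lemma \ref{compara}) produces $\sigma \geq \sigma_{smooth}/m$. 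The inequality is strict when $u^-(\sigma) > 0$; in the subcase $u^-(\sigma) = 0$ equality would force $u^+(\sigma) = u^*(\sigma_{smooth}) = u^+(\sigma_{smooth})$, which will be excluded through the obstacle described below. For existence in a left-neighborhood of $\sigma_{smooth}$, I would use Lemma \ref{cont} to get continuity of $G(\sigma) := h_\sigma(u^+(\sigma))$ and compute $G(\sigma_{smooth}) = h_{\sigma_{smooth}}(u^*(\sigma_{smooth})) = \sigma_{smooth}\, u^*(\sigma_{smooth}) \bigl(\tfrac{1}{m} - 1\bigr) < 0$; continuity then gives $G(\sigma) < 0 = h_\sigma(0)$ on some $]\sigma_{smooth}-\epsilon, \sigma_{smooth}]$, and the intermediate value theorem applied to the continuous, strictly decreasing $h_\sigma$ on $[0, u^*(\sigma)]$ (whose minimum value $h_\sigma(u^*(\sigma))$ is $\leq G(\sigma)$ by the optimality of $u^*(\sigma)$) yields the required $u^-(\sigma) \in \,]0, u^*(\sigma)[$ satisfying \eqref{sentcond}.

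The strict monotonicity of $\sigma \mapsto u^-(\sigma)$ follows at once from the positivity properties of $S$: for $\sigma_1 < \sigma_2$, the non-increasing character of $u^+$ gives $\sigma_2 = S(u^+(\sigma_2), u^-(\sigma_2)) \leq S(u^+(\sigma_1), u^-(\sigma_2))$, hence $S(u^+(\sigma_1), u^-(\sigma_1)) = \sigma_1 < \sigma_2 \leq S(u^+(\sigma_1), u^-(\sigma_2))$, and $\partial_b S > 0$ forces $u^-(\sigma_1) < u^-(\sigma_2)$. The expected main obstacle is the strict monotonicity of $u^+$ required to close the boundary subcase of Step 2. To resolve it, I would argue by contradiction: a common exit point $(u_0, 1)$ for two distinct orbits $r_{\sigma_1}, r_{\sigma_2}$ with $\sigma_1 < \sigma_2$ would, through \eqref{u-grafo} with $r\to 1$, produce slopes $r'_{\sigma_i}(u_0) = (\sigma_i - mcu_0^{m-1})/(c u_0^m)$ that are strictly increasing in $\sigma$; a first-order Taylor expansion for $u$ slightly larger than $u_0$ then yields $r_{\sigma_1}(u) < r_{\sigma_2}(u)$, in direct contradiction with the strict ordering supplied by Lemma \ref{compara}.
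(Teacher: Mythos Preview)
Your proof is correct and follows essentially the same route as the paper's: your secant function $S(a,b)$ is exactly the paper's $\psi(u^+,x)$, and the monotonicity-in-both-arguments, bijection/IVT existence near $\sigma_{smooth}$, and contradiction argument for strict monotonicity of $u^-$ all match. The reformulation via the strictly convex $h_\sigma(x)=cx^m-\sigma x$ is a pleasant repackaging---it makes $u^- < u^*(\sigma) < u^+(\sigma)$ drop out of the mean value theorem, whereas the paper uses a short integral estimate for the same conclusion---but it is not a genuinely different strategy.

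One point where you are actually more careful than the paper: the paper invokes strict monotonicity of $\sigma\mapsto u^+(\sigma)$ by reference to the paragraph after Lemma~\ref{compara}, but that paragraph only asserts monotonicity (the strict ordering $r_{\sigma_1}>r_{\sigma_2}$ on the common domain does not by itself rule out a shared exit point). Your slope-comparison argument at $(u_0,1)$ via the limiting value of \eqref{u-grafo} closes this cleanly and is a genuine improvement in rigor.
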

\begin{proof}
For the sake of clarity  we will denote $u^+(\sigma), \ u^-(\sigma)$  by $u^+, \ u^-$ whenever this creates no confusion.
 Given the value $u^+$, we want to figure out the value of $u^-$ in order that  \eqref{sentcond} holds.
 It may happen that no such value exists. To deal with this issue,
we consider the continuous function
$$
\psi(u^+, x)=
\left\{
\begin{array}{lcl}
\frac{(u^+)^m - x^m}{u^+ - x} & \mbox{if}  & x \neq u^+ \\ & & \\
m (u^+)^{m-1} & \mbox{if}  & x = u^+
\end{array}
\right.
$$
 defined for $x\in [0,\infty[$. The first thing to note is that \eqref{sentcond} is trivially satisfied for $\sigma=\sigma_{smooth}$ with $u^+=u^-$. Note that once $u^+$ is fixed $\psi$ is a strictly increasing function, since
 $$
\frac{\partial \psi(u^+,x)}{\partial x}= \frac{(u^+)^m}{(u^+-x)^2} \left( (m-1) \left(\frac{x}{u^+}\right)^m - m \left(\frac{x}{u^+}\right)^{m-1} +1 \right) >0 \quad \mbox{for}\ x \neq u^+\, .$$
Thus, $\psi(u^+,\cdot)$ is a bijection,  $\psi(u^+,\cdot) : [0,u^+] \to [ (u^+)^{m-1}, m (u^+)^{m-1}]$. We must check if $\sigma/c$ belongs to the latter interval. As $u^+>\left( \sigma/(mc)\right)^\frac{1}{m-1}$ we deduce that
$$
\frac{\sigma}{c} < m (u^+)^{m-1}.
$$
It remains to be determined when do we have that $(u^+)^{m-1} \le \sigma/c$. Notice that for $\sigma = \sigma_{smooth}$ the above inequality is strict. Thus, it continues to hold for some neighborhood $]\sigma_{smooth}-\epsilon,\sigma_{smooth}]$, thanks to Lemma \ref{cont} (more precisely, we know that  the value of $u^+$ increases as $\sigma$ decreases).
The previous arguments ensure that in such a case there is a unique pair $(u^+,u^-)$ verifying \eqref{sentcond}.
We can also prove that the mapping $\sigma \mapsto u^-(\sigma)$ is  strictly  increasing, because  if this is not the  case the existence of values $\sigma_1 < \sigma_2$ such that $u^-(\sigma_1) \ge u^-(\sigma_2)$ leads to the following contradiction:
$$
\frac{\sigma_1}{c} = \psi(u^+(\sigma_1),u^-(\sigma_1)) > \psi(u^+(\sigma_2),u^-(\sigma_2)) = \frac{\sigma_2}{c},
$$
where we have used that the function $\psi$ is increasing in both variables (by symmetry)  and the fact that $u^+(\sigma_1) > u^+(\sigma_2)$.

Now we show that $u^-\le u^*$, so that $(u^-,1)$ can be a departure point for a Type III orbit. More precisely, either $u^+=u^*=u^-$ or $u^+>u^*>u^-$.  To show that, we write
$$
(u^+)^m-(u^-)^m = \int_{u^-}^{u^+} m s^{m-1}\ ds.
$$
Under any of the events $u^+\ge u^- > u^* $ or $u^+>u^-\ge u^* $ we have
$$
(u^+)^m-(u^-)^m> m (u^*)^{m-1} (u^+-u^-).
$$
Then we learn that $\sigma/c> m (u^*)^{m-1}=\sigma/c$, which constitutes a contradiction. This implies that $u^+>u^*>u^-$ or $u^+=u^*=u^-$.
Finally, the necessary condition $\sigma>\sigma_{smooth}/m$ shows up at once, since
$\sigma_{smooth}/mc = (u^+(\sigma_{smooth}))^{m-1}$  and thus the relation $u^+(\sigma)^{m-1}\le \sigma/c$ (which was seen to be required in order that an admissible choice of $u^-$ exists) cannot hold for $\sigma=\sigma_{smooth}/m$, being the map $\sigma \mapsto u^+(\sigma)$ strictly decreasing.
\end{proof}

\begin{remark}
When $m=2$ the condition \eqref{sentcond} reduces to
$$
u^-= \frac{\sigma}{c} - u^+.
$$
\end{remark}

\begin{remark}
Estimates so far show that
$$
\sigma_{smooth}/m<\sigma_{ent}<c \ \mbox{and}\ \sigma_{ent}<\sigma_{smooth}<mc.
$$
This is coherent with the case $m=1$ \cite{CGSS}.
\end{remark}

%%%%%%%%%%%%%%%%
%%%%%%%%%%%%%%%%%
\section{Construction of traveling wave solutions}
\label{sect4}
The purpose of this section is to prove Theorem \ref{parto}.
Let us first precise that our solutions 
satisfy the  property of having null flux at infinity. First of all, we have proved in Theorem \ref{caaampos} that $\lim_{\xi \to -\infty}  u'(\xi) =0$, but it is also true that $\lim_{\xi \to \infty}  u'(\xi) =0$ because $\frac{u'(\xi)}{u(\xi)} \to -\frac{K(0)}{\sigma}$ holds, 
as $\xi \to \infty$, and $r^*\leq 1$. Then
$$
\lim_{|\xi| \to \infty} \frac{u'(\xi)u^m(\xi)}{\sqrt{|u(\xi)|^2 + \frac{\nu^2}{c^2}|u'(\xi)|^2}}=0,
$$
and our claim follows. Thanks to our study of dynamical system \eqref{singular} we have all the tools required to describe the traveling wave solutions of \eqref{modelo1}.  This is the object of our next results.
\begin{proposition}
\label{propES}
The following statements hold true:
\begin{enumerate}

\item  Any Type I orbit induces a smooth traveling wave $u(x-\sigma t)$  which is an entropy solution of \eqref{modelo1}
with null flux at infinity (see Definition \ref{def:nullflux} in Appendix \ref{SectSSS}).
Hence they are unique in the sense of the initial value problem, with initial condition $u(x)$.
This is the case for $\sigma > \sigma_{smooth}$.

\item For any $\sigma_{ent} \leq \sigma \leq \sigma_{smooth}$, there exists a traveling wave solution $u(x-\sigma t)$ with null flux at infinity. These traveling waves are
unique entropy solutions in the sense of the initial value problem, with initial condition $u(x)$. Moreover:
\begin{itemize}

\item
When $\sigma_{ent} < \sigma<\sigma_{smooth}$ the traveling wave is discontinuous at the
junction $x-\sigma t = 0$ and smooth off of it.
The slope is infinite at both sides of this point.

\item When $\sigma=\sigma_{smooth}$ the traveling wave is continuous in the
whole line,
$x-\sigma t =\xi\in\R$,  and smooth off of the junction at  $x-\sigma t
= 0$.
The slope is infinite at both sides of this point.

\item  If  $\sigma = \sigma_{ent}$, then  $u^-(\sigma)=0$ and the corresponding solution is supported on a half line $x-\sigma t = \xi \in \R^-$.
\end{itemize}
 \end{enumerate}
\end{proposition}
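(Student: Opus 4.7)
The plan is to translate the phase-plane orbits of Section \ref{lados} back into traveling-wave profiles $u(x-\sigma t)$, check that the resulting functions solve \eqref{start} distributionally together with the matching rules (i)--(ii) stated at the beginning of Section \ref{lados}, and then invoke Proposition \ref{thm:interpEC2} to certify that they are genuine entropy solutions of \eqref{modelo1}. Uniqueness will be inherited from the $L^1$-contraction/comparison principle for entropy solutions of \eqref{modelo1} with a prescribed initial datum (Theorem \ref{UniqSup}), and the null-flux-at-infinity property has already been verified in the opening paragraph of Section \ref{sect4}, so in each case it only remains to argue that the pieced-together profile is distributional plus entropy.

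For $\sigma>\sigma_{smooth}$, Theorem \ref{caaampos} supplies a Type I orbit which, via Lemma \ref{lemma4}, integrates to a strictly decreasing $u\in\mathcal{C}^1(\R)$ with $u(-\infty)=1$, $u(+\infty)=0$ and solving \eqref{start} classically. Substituting $u(x-\sigma t)$ into \eqref{modelo1} then produces a classical solution in $Q_T$ whose jump set is empty, so the entropy inequalities are void and part~(1) is settled.

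For $\sigma_{ent}\leq \sigma\leq \sigma_{smooth}$, I would paste a Type II orbit on the left with a Type III orbit on the right, using the matching of Proposition \ref{propdisc}. The Type II branch terminates at $(u^+(\sigma),1)$, the Type III branch issues from $(u^-(\sigma),1)$, and \eqref{sentcond} is precisely the one-dimensional Rankine--Hugoniot condition \eqref{eq:speedp} at speed $\sigma$; hence, translating so that the junction sits at $\xi=0$, the moving discontinuity travels at the correct velocity. Since $r\to 1$ means $u'\to -\infty$, the profile has infinite slope on both sides of the junction, which by Proposition \ref{thm:interpEC2} and \eqref{verticalA1}--\eqref{verticalA2} is equivalent to the entropy conditions at the jump set. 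When $\sigma=\sigma_{smooth}$ we have $u^+(\sigma_{smooth})=u^-(\sigma_{smooth})=u^*(\sigma_{smooth})$ by Proposition \ref{s1}, so the two branches match continuously while preserving the vertical tangent. When $\sigma=\sigma_{ent}$ the equality $u^-(\sigma_{ent})=0$ provided by Proposition \ref{propdisc} collapses the Type III branch to $u\equiv 0$, leaving a profile supported in the half-line $\{x-\sigma t\leq 0\}$, with only the one-sided condition \eqref{verticalA1} to be enforced (this is exactly case~(b) of the geometric dichotomy stated after Proposition \ref{thm:interpEC2}).

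The main technical obstacle is the rigorous verification that the pasted profile is a true entropy solution, not merely a formal distributional one, across the jump: one must show that $u(t)\in BV_{\mathrm{loc}}(\R)$ has no Cantor part, that $\z=\a(u,\nabla u)$ is a bounded vector field whose divergence is a Radon measure so that its weak normal traces are meaningful in the sense of \cite{Anzellotti1,ChenFrid1,leysalto}, and that these traces satisfy \eqref{eq:vcap}. The first follows from the piecewise-$\mathcal{C}^1$ character of the profile and the absence of oscillations, while the second is a direct computation from the explicit form of $\a$ once the vertical-slope information at $r=1$ is used; the delicate point is doing this cleanly in the half-line case $\sigma=\sigma_{ent}$, where the zero trace on the right must be treated as in case (b). With these verifications in hand, Proposition \ref{thm:interpEC2} turns each candidate into an entropy solution, and Theorem \ref{UniqSup} delivers uniqueness with initial datum $u(x)$, closing the proof.
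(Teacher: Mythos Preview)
Your proposal is correct and follows essentially the same route as the paper: Type I orbits give smooth classical solutions for $\sigma>\sigma_{smooth}$, while for $\sigma_{ent}\le\sigma\le\sigma_{smooth}$ the Type II/Type III concatenation of Proposition~\ref{propdisc} is certified as an entropy solution via Proposition~\ref{thm:interpEC2} (piecewise smoothness gives $BV_{\mathrm{loc}}$ with no Cantor part, and \eqref{sentcond} plus the vertical-slope condition at $r=1$ yield \eqref{eq:speedp} and \eqref{eq:vcap}), with uniqueness coming from Theorem~\ref{UniqSup}. The paper's own proof is terser but invokes exactly the same ingredients in the same order.
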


\begin{proof}
$(1)$ is a consequence of Remark \ref{prop} and Proposition \ref{s1}. The uniqueness result follows from
Theorem \ref{UniqSup} (see Appendix \ref{SectSSS}).

 $(2)$ When $\sigma < \sigma_{smooth}$ we have that $u\in C([0,T];L^1_{\rm loc}(\R^N)) \cap BV_{\rm loc}(]0,T[\times \R^N)$
and $Du$ has no Cantor part. Since by Proposition \ref{propdisc} the speed of the discontinuity fronts satisfies \eqref{eq:speedp},
then Proposition \ref{thm:interpEC2} implies  that $u(x-\sigma t)$ is an entropy solution of \eqref{modelo1}. As a concatenation of
Type II and Type III orbits, it is smooth out of the discontinuity set and has a null flux at infinity. When
$\sigma = \sigma_{smooth}$, the traveling wave satisfies $u\in C([0,T];L^1_{\rm loc}(\R^N)) \cap W^{1,1}_{\rm loc}(]0,T[\times \R^N)$. Hence,
 by Proposition \ref{thm:interpEC2}, it
is an entropy solution. As a concatenation of
Type II and Type III orbits, it has a null flux at infinity. Uniqueness  follows from
Theorem \ref{UniqSup} (see  Appendix  \ref{SectSSS}). The additional statements are consequences of Proposition \ref{propdisc}.
\end{proof}

Now we wonder about the number of traveling waves that can be constructed with a given speed.
Let us recall that when we say that a function is piecewise smooth, up to a finite number of points, we understand
that at those singular points there is a jump either of the function or of its first derivative.

\begin{proposition}
\label{unicidad}
Given any $\sigma \in [\sigma_{ent},+\infty[$, the only nontrivial entropy
 solution of \eqref{modelo1} with the form $u(x -\sigma t)$, having its range in $[0,1]$ and  being
piecewise smooth --up to a finite number of points-- is (up to
spatial shifts) the one provided by Proposition \ref{propES}.
\end{proposition}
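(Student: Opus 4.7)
The plan is to show that any such traveling wave profile $u(\xi)$, with $\xi=x-\sigma t$, must be a concatenation of trajectories of the planar system \eqref{singular} glued together according to the matching rules (i)--(ii) of Section \ref{lados}, and that the structure of such a concatenation is rigidly fixed by the uniqueness results already established for the planar system. The piecewise smooth hypothesis translates into the existence of only finitely many singular $\xi$-values, so that between consecutive singularities the profile satisfies \eqref{start} classically and the reduction to \eqref{singular} carried out in Section \ref{lados} applies on each smooth piece.

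First, I would pin down the leftmost smooth piece. As $\xi\to -\infty$ the profile must approach a boundary equilibrium of \eqref{singular} in $[0,1]\times[0,1]$, and Proposition \ref{ab} singles out $(1,0)$ as the only equilibrium compatible with a nontrivial orbit entering the interior. By Theorem \ref{caaampos}, for each $\sigma\ge 0$ there is, up to $\xi$-translation, a unique such orbit; since the eigenvector in \eqref{pendientes} has negative $u$-component and positive $r$-component thanks to $K'(1)<0$, this orbit is strictly decreasing near $-\infty$ and lies in $r>0$. Proposition \ref{s1} then asserts that the orbit is of Type I if $\sigma>\sigma_{smooth}$ and of Type II if $\sigma\le\sigma_{smooth}$. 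In the Type I case the orbit reaches $(0,r^*)$ as $\xi\to+\infty$, producing a complete $C^\infty$ profile; rule (i) forbids adding any further smooth piece after a point of finite slope, so this is everything and uniqueness up to translation is immediate. In the Type II case the orbit reaches $r=1$ at some finite $\xi_+$ with value $u^+(\sigma)$ and infinite left-slope.

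Second, I would analyze the continuation past $\xi_+$. Rules (i)--(ii) leave only two options: either the profile matches continuously, which via rule (i) forces $u^-=u^+$, an equality possible only when $u^+(\sigma)=u^*(\sigma)$, i.e.\ $\sigma=\sigma_{smooth}$; or it jumps down according to the Rankine--Hugoniot condition \eqref{eq:speedp} with vertical slopes on both sides. By Lemma \ref{luegotellamo} the pair $(u^+(\sigma),u^-(\sigma))$ is unique, and the hypothesis $\sigma\ge \sigma_{ent}$ guarantees $u^-(\sigma)\ge 0$; when $\sigma=\sigma_{ent}$ the right partner is $u^-=0$, so rule (ii) only requires infinite left-slope and the continuation is the trivial extension $u\equiv 0$. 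Otherwise the continuation is a trajectory of \eqref{singular} departing from $(u^-(\sigma),1)$ with $u^-(\sigma)\le u^*(\sigma)$; Proposition \ref{viejo3} gives exactly one such Type III orbit (up to reparametrization), and by Proposition \ref{ab} it converges to $(0,r^*)$ as $\xi\to+\infty$. A second jump cannot occur, for a Type III orbit never returns to $r=1$.

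The step I expect to be most delicate is ruling out non-monotone or multi-singular pathologies which the piecewise smooth hypothesis does not obviously exclude, such as right-half smooth pieces that oscillate, or finitely many additional $u'$-jumps at interior points. To preclude oscillations, I would combine the fact that at any interior critical point of a smooth piece with $u\in{}]0,1[$ the equation \eqref{start} forces $\nu u^{m-1}u'' = -F(u) < 0$, ruling out interior local minima, with rule (i), which forces $u'$ to match at any continuous junction of finite slope. Together these imply that every smooth piece is strictly monotone, after which the phase-plane classification of Section \ref{lados} and the matching constraints above produce exactly the profile described in Proposition \ref{propES}, giving the claimed uniqueness modulo spatial shifts.
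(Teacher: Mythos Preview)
Your outline captures the right architecture---reduce to the planar system on each smooth piece, pin down the leftmost piece via Theorem \ref{caaampos}, then use Lemma \ref{luegotellamo} and Proposition \ref{viejo3} to force a unique continuation---but there is a circularity in how you handle monotonicity, and this is where the argument breaks.

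In your first paragraph you invoke Proposition \ref{ab} to conclude that the leftmost piece must emanate from $(1,0)$. But Proposition \ref{ab} is stated and proved only for orbits taking values in $]0,1[\times{}]0,1[$, i.e.\ it already presupposes $r>0$, which is exactly the decreasing profile assumption. So your first two paragraphs tacitly assume the conclusion you defer to the third. Your attempt to close this gap via the second-derivative identity $\nu u^{m-1}u''=-F(u)<0$ is correct as far as it goes---interior critical points on a smooth piece are strict maxima---but it does \emph{not} yield ``every smooth piece is strictly monotone'': a piece may perfectly well be increasing-then-decreasing with one interior maximum. In particular you have not excluded a leftmost piece with $u\to 0$ as $\xi\to -\infty$ that rises to a maximum and then falls, nor have you said anything about upward jumps.

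The paper closes this gap differently. It explicitly enlarges the phase space to $[0,1]\times[-1,1]$ (allowing $r<0$, i.e.\ increasing portions), reruns the boundary analysis of Proposition \ref{ab} there, and shows that on $I_1$ the orbit must tend either to $(1,0)$ or to $\{u=0\}\times[-1,1]$ as $\xi\to -\infty$. The second alternative is then eliminated by an argument you are missing entirely: an entropic jump satisfying \eqref{eq:vcap}--\eqref{eq:speedp} with $\sigma>0$ must be a \emph{downward} jump (left value above right value, slopes $-\infty$ on both sides), so a profile that is identically zero on the left cannot be continued nontrivially to the right. The paper also inserts a Step 2 showing that each maximal interval of smoothness is already a maximal interval of existence for \eqref{singular}; you gesture at this via rule (i) but do not state it, and without it one cannot exclude artificial cuts of a single orbit. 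Once the leftmost piece is known to come from $(1,0)$, Theorem \ref{caaampos} forces it to be the unique (decreasing) unstable-manifold orbit, and your second paragraph then goes through.
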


\begin{proof}
The proof is divided into a series of steps.

\emph{Step 1. Precise setting of the problem.} Let $\sigma \in [\sigma_{ent},+\infty[$ and let $u(x-\sigma t)$ be a
traveling wave
which is piecewise smooth, up to a finite number of points, and satisfies the entropy conditions. Recall that, as it was shown in Section \ref{lados}, traveling wave solutions of \eqref{modelo1} with range in $[0,1]$ are in close correspondence with solutions of the system \eqref{singular} considered over the range $0\le u \le 1,\ -1\le r\le 1$ (here we are not making any monotonicity assumption). Thus, during this proof we consider the system \eqref{singular} to be defined on $[0,1]\times [-1,1]$.

Let $I_i=]\xi_i,\xi_{i+1}[$, $i=1,\ldots,p$,  be  maximal intervals of
smoothness
of $u$, so that $\xi_1=-\infty$, $\xi_{p+1}=+\infty$, and either $u$ or $u'$ has a jump
point at $\xi=\xi_i$ for all $i=2,\ldots,p$.
Since $Du$ has no Cantor part, entropy solutions are characterized by
Proposition \ref{thm:interpEC2} and so observations $(i)$ and $(ii)$ in
Section \ref{lados} hold. Moreover, $u$ is a solution of (\ref{start})
in $\mathcal{D}^\prime(\R)$ and the pair
$(u(\xi),r(\xi))$ is a solution of (\ref{singular}) in each interval $I_i$.

\emph{Step 2.}  We show that each of the intervals $I_i$ is a maximal interval of existence for the system \eqref{singular} (that is, cutting and matching at will does not yield reasonable solutions). Let $i$ be a fixed value. As $I_i$ is a maximal interval of smoothness, then it is a subset of a maximal interval of existence of \eqref{singular}. Assume for instance that the maximal interval of existence has the form $]\xi_{i},\overline{\xi}[$ for some $\overline{\xi}>\xi_{i+1}$, the other possibilities can be handled in a similar way. Then,
there exists a smooth pair $(\overline{u}(\xi),\overline{r}(\xi))$ defined on $]\xi_{i},\overline{\xi}[$ as a maximal solution to \eqref{singular}, such that $(u,r)$ and $(\overline{u},\overline{r})$ coincide over $I_i$, but $(u,r)(\xi_{i+1}^+) \neq (\overline{u},\overline{r})(\xi_{i+1}^+)$. As $r(\xi_{i+1}^-)=\overline{r}(\xi_{i+1}^-)$ we get that $u'(\xi_{i+1}^-)$ is finite. Being $u(x-\sigma t)$ an entropy solution of \eqref{modelo1}, in case that $u(\xi_{i+1}^+) = \overline{u}(\xi_{i+1}^+)$ we must have that $u'(\xi_{i+1}^+) = \overline{u}'(\xi_{i+1}^+)\in \R$ thanks to observation (i) at the beginning of Section \ref{lados}; thus $u(\xi)$ could be extended smoothly to the right of $I_i$, which would be a contradiction. Then, this means that $u(\xi_{i+1}^+) \neq \overline{u}(\xi_{i+1}^+)$. Knowing that $u(x-\sigma t)$ is an entropy solution of \eqref{modelo1}, observation (ii) at the beginning of Section \ref{lados} shows that $|u'(\xi_{i+1}^-)|=\infty$, but this is again a contradiction as we already knew that this value was finite. Thus, the only way out is to conclude that $I_i$ is a maximal interval of existence.

\emph{Step 3.}  Let us prove that $(u(\xi),r(\xi))$ $ \to (1,0)$ as $\xi\to-\infty$.
Proceeding as in Proposition \ref{ab}, the analysis of the flow given by \eqref{singular} at the boundaries of $[0,1]\times [-1,1]$  is straightforward. This can be combined with arguments similar to those in Proposition \ref{ab} to show that either $(u(\xi),r(\xi))$ tends to $\{u=0\}\times [-1,1]$ when $\xi\to-\infty$ or $(u(\xi),r(\xi))$ $ \to (1,0)$ as $\xi\to-\infty$.

The first possibility yields only the zero solution: note that the set $\{u=0\}\times [-1,1]$ is positively invariant under the flow \eqref{singular}. So, no attempt to try to construct a non-trivial solution such that $(u(\xi),r(\xi))$ tends to $\{u=0\}\times [-1,1]$ as $\xi\to-\infty$  is successful. Indeed, any such solution would be equal to zero in $I_1$, with $\xi_2 <+\infty$, and being not identically zero we have to extend it further to the right in a non-trivial way. Being $\{u=0\}\times [-1,1]$ positively invariant under the flow, the only way to do this is performing a discontinuous matching with some other orbit defined in $I_2$. The matching to be performed has to satisfy the requirements set up in Proposition \ref{thm:interpEC2}, which implies that the profile must be traveling from right to left, i.e. $\sigma <0$. This contradicts the assumptions of the current proposition. Thus, the only chance that is left is to have $(u(\xi),r(\xi))$ $ \to (1,0)$ as $\xi\to-\infty$.

\emph{Step 4.} By Theorem \ref{caaampos}  and our assumption on the range of the traveling wave, the solution $(u,r)$ in $I_1$ is unique and
satisfies that
$(u(\xi),r(\xi))$ $ \to (1,0)$ as $\xi\to-\infty$. The solution has a
decreasing profile in $I_1$ and a limit
$u(\xi_2^-)$ as $\xi\to \xi_2^-$. By Proposition \ref{s1}, if $\sigma >
\sigma_{smooth}$, then $\xi_2 = +\infty$,
and $u$ is smooth in all $\R$ and coincides with the solution
constructed in Proposition \ref{propES}.
If $\sigma \in [\sigma_{ent},\sigma_{smooth}]$, then $u$ is a Type II
orbit in $I_1$.
Let us prove that $p=2$ and the statement of the proposition holds.
We distinguish three
cases.

\begin{itemize}
\item[a)]  If $\sigma \in  ]\sigma_{ent},\sigma_{smooth}[$,
then by Proposition \ref{propdisc} we have that $u(\xi_2^-) = u^+(\sigma) > u^*(\sigma)$.
As in Lemma \ref{luegotellamo}, it holds that $0 < u(\xi_2^+) < u^*(\sigma)$
and then, by Proposition \ref{viejo3} (see also Lemma \ref{viejo}),
$I_2 = ]\xi_2,+\infty[$ and uniqueness of (\ref{singular}) holds in
$I_2$. Thus,
the solution $u$ of (\ref{start}) in $I_2$ coincides with the solution
of (\ref{singular}) in that interval.

\item[b)]  If $\sigma = \sigma_{smooth}$, then  by
Proposition \ref{propdisc} we have that $u(\xi_2^-) = u^+(\sigma)= u^*(\sigma)$
and $r(\xi_2^-)=1$. By the Rankine--Hugoniot condition (\ref{sentcond})
and  observation
$(i)$ in Section \ref{lados}, respectively, we have $u(\xi_2^+) =
u^-(\sigma) = u^*(\sigma)$ and
$r(\xi_2^+)=1$. Again, by Proposition \ref{viejo3} (see also Lemma
\ref{viejo}),
$I_2 = ]\xi_2,+\infty[$ and uniqueness of (\ref{singular}) holds in
$I_2$. Thus,
the solution $u$ of (\ref{start}) in $I_2$ coincides with the solution
of (\ref{singular}) in that interval.
In this case, $u$ has an infinite slope at both sides of $\xi_2$ but $r$
matches continuously there.

\item[c)]  If $\sigma = \sigma_{ent}$, then  $u(\xi_2^-) = u^+(\sigma)>
u^*(\sigma)$
(see the proof of Lemma \ref{luegotellamo}). Recall that, by our
definition of $\sigma_{ent}$, we have
$u^-(\sigma_{ent}) = 0$. Since $u$ is an entropy solution, then
$r(\xi_2^-)=1$ and Rankine--Hugoniot condition
(\ref{sentcond}) holds. Finally,  we can ensure that
$u(\xi)= 0$ for $\xi\in ]\xi_2,\infty[$ as the set $\{0\}\times [-1,1] $ is a positively invariant manifold of the dynamical system. The solution coincides with the
traveling wave found in
Proposition \ref{propES}.
\end{itemize}

This concludes the proof. \end{proof}
\begin{remark}{\rm
We stress that as a consequence of Proposition \ref{unicidad}, no traveling structures (soliton-like solutions in particular) with range in $[0,1]$ and speed $\sigma \ge 0$ other than the ones given by Proposition \ref{propES} can be constructed. Regarding the case $\sigma <0$, a similar analysis could be carried to show that the only admissible traveling profiles in our framework are mirror images of those constructed for $\sigma>0$.}
\end{remark}
%%%%%%
\begin{figure}[h]
\begin{center}
\includegraphics[width=9cm]{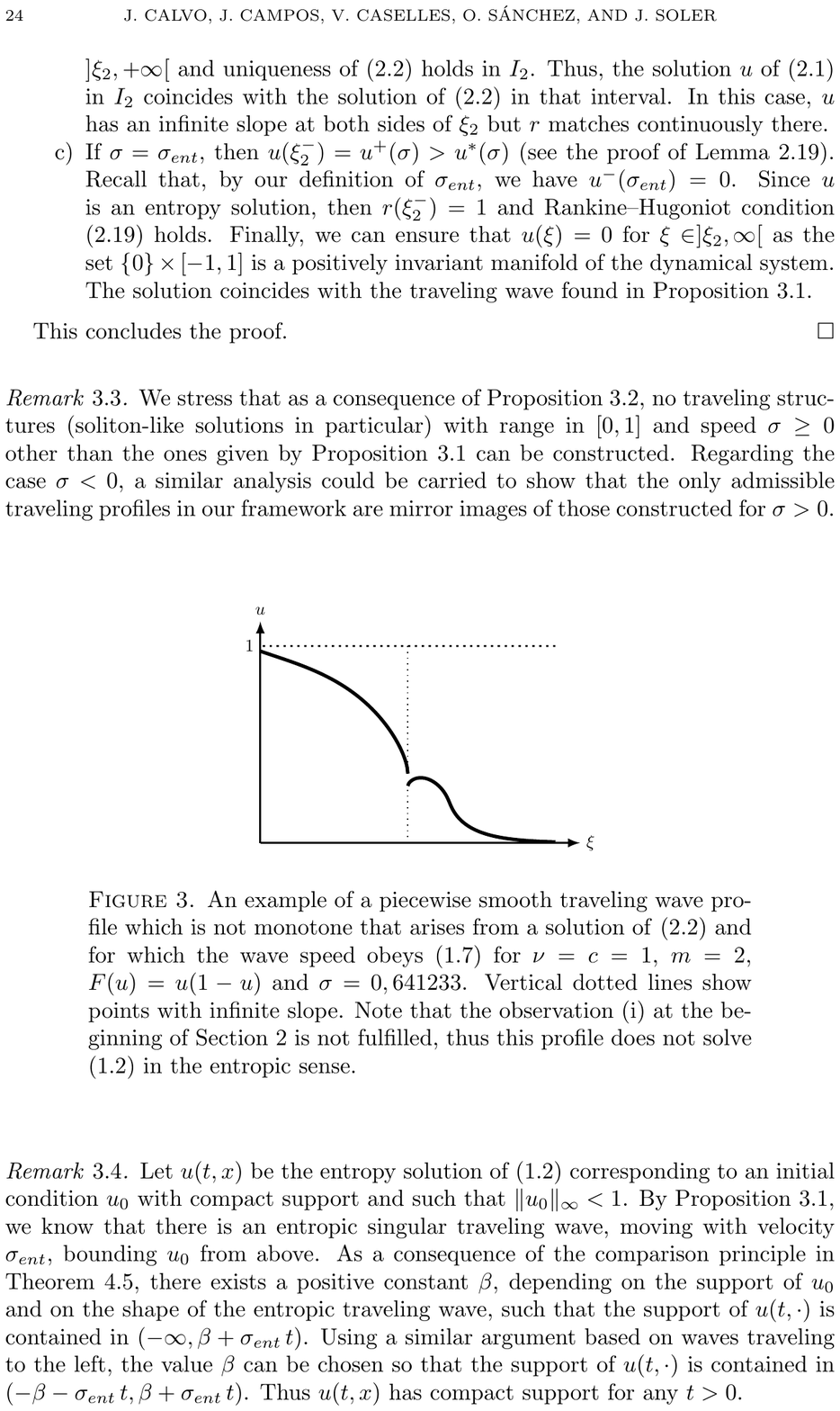}
\end{center}
\caption{An example of a piecewise smooth traveling wave profile which is not monotone that arises from a solution of \eqref{singular} and for which the wave speed  obeys \eqref{eq:speedp} for $\nu = c = 1$,  $m=2$, $F(u) = u(1-u)$ and $\sigma = 0,641233$. Vertical dotted lines show points with infinite slope. Note that the observation (i) at the beginning of Section \ref{lados}  
is not fulfilled, thus this profile does not solve \eqref{modelo1} in the entropic sense.}\label{soli}
\end{figure}

\begin{remark}
Let $u(t,x)$ be the entropy solution of (\ref{modelo1}) corresponding to an initial condition $u_0$
with compact support and such that $ \| u_0 \|_ {\infty} <  1$. By Proposition \ref{propES},
we know that there is an entropic  singular traveling wave, moving with velocity $\sigma_{ent}$, bounding $u_0$ from above.
As a consequence of the comparison principle in Theorem \ref{UniqSup},
there exists a positive constant $\beta$, depending on the support of $u_0$ and on the shape of the entropic traveling wave, such that
the support of  $u(t, \cdot)$ is contained in $(-\infty,\beta + \sigma_{ent} \, t)$.
Using a similar argument based on waves traveling to the left, the value $\beta$ can be chosen so that
the support of $u(t, \cdot)$ is contained in $(-\beta - \sigma_{ent} \, t,\beta + \sigma_{ent} \, t)$.
Thus $u(t,x)$ has compact support for any $t > 0$.

Note also that the traveling waves with support in a half line can be used to prove that solutions of
\begin{equation}\label{modelo1F0}
\begin{array}{ll}
\displaystyle
\frac{\partial u}{\partial t} = \nu \, \left(\frac{u^m u_x}{\sqrt{u^2 + \frac{\nu^2}{c^2}|u_x|^2}} \right)_x,\hspace{0.3cm} & {\rm in}
\hspace{0.2cm}  ]0,T[\times \R,
\end{array}
\end{equation}
corresponding to initial data with compact support are compactly supported.
Let us sketch the proof of this fact. Let $u_0 \in (L^1(\R)\cap L^\infty(\R))^+$ and assume that $u_0$
is supported in $[a,b]$. Let $u(t,x)$ be the entropy solution of
(\ref{modelo1F0}) with $u(0,x) = u_0(x)$. Observe first that the homogeneity (of degree $m>1$) of the operator in (\ref{modelo1F0})
implies that for any $\lambda > 0$, $u_\lambda(t,x)= \lambda^{1/(m-1)} u(\lambda t,x)$ is the entropy solution of
(\ref{modelo1F0}) with initial datum  $u_\lambda(0,x)= \lambda^{1/(m-1)} u_0(x)$. By an appropriate choice of $\lambda$ depending on $\Vert u_0\Vert_\infty$ and after a suitable translation of the initial profile
of $u_{\sigma_{{\rm ent}}}$ (eventually with $[a,b]$ in the interior of the support of $u_{\sigma_{{\rm ent}}}$)
we may ensure that $u_\lambda(0,x) \leq u_{\sigma_{{\rm ent}}}(x)$, $x\in\R$.
Since $u_{\sigma_{{\rm ent}}}(x-\sigma_{{\rm ent}} t)$ is a super-solution of (\ref{modelo1F0}),
by the comparison principle in Theorem \ref{UniqSup}.(ii) we have that
$u_\lambda(t,x) \leq u_{\sigma_{{\rm ent}}}(x-\sigma_{{\rm ent}} t)$ for any $t > 0$. Writing this inequality in terms of $u(t,x)$ we have
$u(t,x) \leq \lambda^{-1/(m-1)} u_{\sigma_{{\rm ent}}}(x-\sigma_{{\rm ent}} \frac{t}{\lambda})$. By comparing with a traveling wave
moving to the left with speed $\sigma_{{\rm ent}}$ we deduce that for any $t > 0$ the support of $u(t)$ is contained in
$\left[ a -\epsilon - \sigma_{{\rm ent}} \frac{t}{\lambda}, b + \epsilon+ \sigma_{{\rm ent}} \frac{t}{\lambda}\right]$ for some
$\epsilon,\lambda > 0$ determined by $u_0$.
\end{remark}

%%%%%%%%%%%%%%%%%%%%%
\subsection{Numerical insights about traveling waves viewed as attractors}\label{SectNumerics}
Studying the stability of traveling wave solutions and their dynamic ability to attract other solutions  is a very interesting problem that is beyond the scope of this paper.
Another  problem that will surely open new lines of research is to understand how the saturation of diffusion produces shocks (and the role played by the
reaction terms, if any).
 The idea of this  paragraph is to give some insights of how these two problems raise new challenges in this context. To do that we use the numerical solutions of the dynamical system associated with traveling waves \eqref{singular} together with those  associated to  the partial differential equation \eqref{modelo1}. In Figure \ref{cerca} we have represented both curves associated to different settings. The traveling wave profiles have been displaced by matching its discontinuities with those of the  time dependent solutions.
For the numerical solution of the time-dependent problem we have used a WENO  solver   together with a Runge--Kutta scheme.

Figure  \ref{cerca}A) describes the evolution of an initial data with compact support and how  does it evolve  into an  entropic jump which locally around the front  behaves like a traveling wave.
By using the comparison principle for solutions in Theorem \ref{UniqSup}, we deduce that the traveling wave will be above the time dependent solution of the system \eqref{modelo1}.
The entropic traveling wave  provides an upper estimate of the growth rate of the support. Let us precise in the following result the order of the singularity of the traveling wave solution near the jump $\xi_2$, where we use the notation of Step 4 in Proposition \ref{unicidad}.

\begin{lemma} \label{converge}
Let $u$ be an entropic traveling wave for $\sigma \in [\sigma _{ent}, \sigma_{smooth}[$. Then,  the vertical angle near the jump  $\xi_2$ is of order $|\xi -\xi_2|^{-\frac{1}{3}}$.
\end{lemma}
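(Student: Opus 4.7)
The plan is to carry out an asymptotic analysis of the dynamical system \eqref{singular} as the orbit approaches the exit point $(u^+(\sigma),1)$, which by Proposition \ref{propdisc} satisfies $u^+(\sigma) > u^*(\sigma)$ when $\sigma \in [\sigma_{ent},\sigma_{smooth}[$. The strategy is to desingularize via the reparametrization $\phi$ introduced in the proof of Proposition \ref{ab}, extract a leading order for $1-R$ in the $\phi$-variable from the regular system \eqref{regu}, and then invert the reparametrization to translate this back to the physical variable $\xi$.

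First I would introduce $\epsilon(\phi):= 1 - R(\phi)$ and examine the second equation in \eqref{regu} at $(U,R)=(u^+,1)$. Since $u^+>u^*(\sigma) = (\sigma/(cm))^{1/(m-1)}$, the leading term $R\bigl(m U^{m-1}\tfrac{c}{\nu}R - \tfrac{\sigma}{\nu}\bigr)$ evaluated there equals a strictly positive constant $A_\sigma := m(u^+)^{m-1}c/\nu - \sigma/\nu$; the remaining term $\tfrac{K(U)}{c}\sqrt{1-R^2}$ vanishes at that point. Hence in a backward neighborhood of the matching parameter $\phi_2$, we have $R'(\phi) = A_\sigma + o(1)$, so by integration $\epsilon(\phi) = A_\sigma(\phi_2 - \phi) + o(\phi_2-\phi)$ as $\phi\to\phi_2^-$.

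Next I would translate this into the $\xi$-variable using $\phi'(\xi) = \frac{1}{u^{m-1}\sqrt{1-r(\xi)^2}}$. Writing $\sqrt{1-r^2} = \sqrt{\epsilon(2-\epsilon)} \sim \sqrt{2\epsilon}$ and using the continuity $u(\xi)\to u^+$, one obtains an integral relation that implies the ODE
\begin{equation*}
-\epsilon'(\xi) \;=\; \frac{A_\sigma}{(u^+)^{m-1}\sqrt{2\,\epsilon(\xi)}}\bigl(1+o(1)\bigr) \qquad \hbox{as $\xi\to\xi_2^-$.}
\end{equation*}
Integrating this singular ODE between $\xi$ and $\xi_2$ produces $\epsilon(\xi)^{3/2} = B_\sigma(\xi_2-\xi)(1+o(1))$ for an explicit positive constant $B_\sigma$, i.e.\ $1-r(\xi) \sim B_\sigma^{2/3}(\xi_2-\xi)^{2/3}$. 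Plugging this into the first equation of \eqref{singular} gives
\begin{equation*}
u'(\xi) \;=\; -\frac{c\,u(\xi)\,r(\xi)}{\nu\sqrt{1-r(\xi)^2}} \;\sim\; -\frac{c\,u^+}{\nu\sqrt{2}\,B_\sigma^{1/3}}\,|\xi-\xi_2|^{-1/3},
\end{equation*}
which is the claimed order. An entirely analogous analysis at $\xi_2^+$ (using Proposition \ref{viejo3}, which ensures a Type III orbit issuing from $(u^-(\sigma),1)$ with $u^-<u^*$; the sign of $mU^{m-1}c/\nu-\sigma/\nu$ there is negative but its absolute value plays the same role) yields the same $1/3$ blow-up exponent on the right side of the jump.

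The main obstacle I anticipate is making rigorous the $o(1)$ control when passing from the leading-order ODE to the integrated asymptotics: one must justify that the remainder terms, involving $u(\xi)-u^+$ and the $K(u)\sqrt{1-R^2}$ contribution, do not affect the exponent. A clean way is to bracket $-\epsilon'(\xi)\sqrt{\epsilon(\xi)}$ between two constants strictly close to $A_\sigma/((u^+)^{m-1}\sqrt{2})$ on a small interval $[\xi_2-\delta,\xi_2)$ and integrate the inequality in both directions; this provides two-sided bounds $c_1(\xi_2-\xi)^{2/3}\le \epsilon(\xi)\le c_2(\xi_2-\xi)^{2/3}$, and then a standard boot-strap with the explicit formula for $u'$ upgrades these to the sharp asymptotic equivalence.
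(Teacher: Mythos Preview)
Your argument is correct and yields the claimed $|\xi-\xi_2|^{-1/3}$ blow-up of the slope on both sides of the jump. The bracketing idea you sketch at the end is adequate to make the $o(1)$ terms rigorous.

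The paper's proof reaches the same conclusion by a slightly more direct route. Instead of passing through the $\phi$-reparametrization and the regular system \eqref{regu}, it works with $r$ as a function of $u$ via equation \eqref{u-grafo}: since the right-hand side of \eqref{u-grafo} has a finite nonzero limit as $u\to u_\pm$ (because $u_\pm\neq u^*(\sigma)$ when $\sigma\in[\sigma_{ent},\sigma_{smooth}[$), one gets directly $1-r(u)\sim \alpha|u-u_\pm|$ with $\alpha=\tfrac{1}{cu_\pm}|mc-\sigma u_\pm^{1-m}|$. Plugging this into $u'=-\tfrac{c}{\nu}\tfrac{ru}{\sqrt{1-r^2}}$ gives $|u-u_\mp|^{1/2}|u'(\xi)|\to \tfrac{cu_\mp}{\sqrt{2\alpha}\,\nu}$, which after one integration yields $|u(\xi)-u_\mp|\sim \beta_\mp|\xi-\xi_2|^{2/3}$ and hence the $-1/3$ exponent for the derivative. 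The two arguments are essentially the same desingularization viewed through different coordinates: the paper uses $u$ as the independent variable (leveraging the monotonicity of $u$ and the already-developed machinery of Lemma \ref{lemma4}), whereas you use the rescaled time $\phi$. Your approach requires one more change of variables but has the minor advantage of making the regularity of the limiting coefficient transparent via the smooth system \eqref{regu}; the paper's is shorter and also produces the explicit constants $\beta_\pm$ and the power-law formula \eqref{powerlaw} for the case $\sigma=\sigma_{ent}$ directly in terms of $u_\pm$.
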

\begin{proof}
Using  (\ref{u-grafo}), we obtain that the points $u_{\pm }$
at which $r(u)$ touches the edge  $r=1$   verify
\[
\lim_{u\rightarrow u_{\pm }}\left| \frac{r(u)-1}{ u-u_{\pm} }%
\right| =\lim_{u\rightarrow u_{\pm }}\left| r^{\prime }(u)\right| =\alpha ,
\]
where $\alpha =\frac 1{cu_{\pm }}\left| mc-\sigma u_{\pm
}^{1-m}\right|$.  Hence, combining  (\ref{r-sigma}) together with the approximation  $\sqrt{1-r^2}%
\sim \sqrt{1-r}\sqrt{2}$ we deduce for  $\sigma_{ent}<
\sigma<\sigma_{smooth}$, which implies $u_{\pm }\neq u^{*}$, the following equality
\[
\lim_{\xi \rightarrow \xi _2^{\pm }}\left| u(\xi )-u_{_{\mp }}\right|
^{\frac 12} |u^{\prime }(\xi )|=\frac{cu_{\mp }}{\sqrt{2 \alpha} \, \nu  }=\frac{%
(c\, u_{\mp })^{\frac3 2}}{\sqrt{2}\, \nu \left| mc-\sigma u_{\mp }^{1-m}\right|^{\frac1 2}}.
\]
Then, we have
\[
\lim_{\xi \rightarrow \xi _2^{\pm }}\frac{\frac 23\left| u(\xi )-u_{_{\mp
}}\right| ^{\frac 32}}{\left| \xi -\xi _2\right| }=\frac{%
(c\, u_{\mp })^{\frac3 2}}{\sqrt{2}\, \nu \left| mc-\sigma u_{\mp }^{1-m}\right|^{\frac1 2} }
\]
or equivalently
\[
\lim_{\xi \rightarrow \xi _2^{\pm }}\frac{\left| u(\xi )-u_{\mp }\right| }{%
\left| \xi -\xi _2\right| ^{\frac 23}}=\left( \frac{%
3 (c\, u_{\mp })^{\frac3 2}}{2 \sqrt{2}\, \nu \left| mc-\sigma u_{\mp }^{1-m}\right|^{\frac1 2} }\right) ^{\frac 23}.
\]
Letting  $\beta _{\pm }=\left( \frac{%
3 (c\, u_{\pm })^{\frac3 2}}{2 \sqrt{2}\, \nu \left| mc-\sigma u_{\pm }^{1-m}\right|^{\frac1 2} }\right) ^{\frac 23}$,  we find
\[
\begin{array}{cc}
u(\xi )\sim u_{+}+\beta _{+}\left| \xi -\xi _2\right| ^{\frac 23}, & \xi <\xi
_2 ,\\
u(\xi )\sim u_{-}-\beta _{-}\left| \xi -\xi _2\right| ^{\frac 23}, & \xi >\xi
_2.
\end{array}
\]
For  $\sigma =\sigma _{ent}$, since  $u(\xi )=0=u_-=0$ for $\xi
>\xi _2$,  taking into account that $\sigma= c u_+^{m-1}$,  we analogously obtain 
\begin{eqnarray}\label{powerlaw}
\qquad u(\xi )\sim  \left(\frac{\sigma}{c}\right)^{\frac{1}{m-1}}+ \frac1 2 \left(\frac{3c}{\nu  \left( m-1\right)^{\frac1 2}}\right)^{\frac 23}  
 \left(\frac{\sigma}{c}\right)^{\frac{1}{m-1}}  \left| \xi -\xi _2\right| ^{\frac 23}, \qquad  \xi <\xi _2,
\end{eqnarray}
which provides an estimate of the order of approximation of the  traveling wave profile near the front. \end{proof}

The numerical time dependent solutions given in  Figure  \ref{cerca}A) has the same power law behavior near the front than the corresponding traveling wave 
\eqref{powerlaw}.

In Figure  \ref{cerca}B)
the numerical calculations show spontaneous singularization of solutions and the convergence of an initial data towards a traveling wave solution to the type described in Figure \ref{TWprofiles}C).

%%%%%%
\begin{figure}[h]
\begin{center}
\includegraphics[width=13cm]{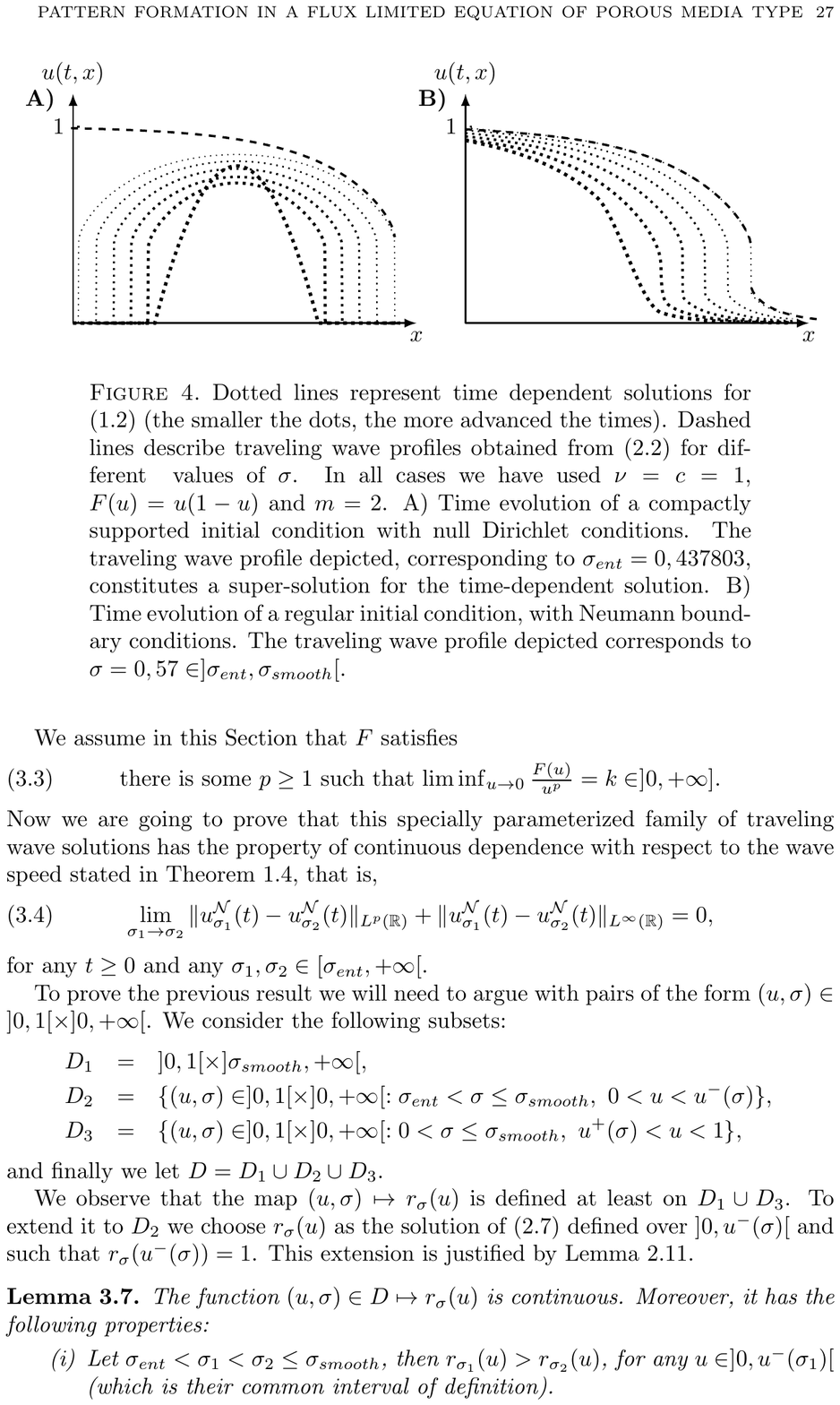}
\end{center}
\caption{Dotted lines represent time dependent solutions for  \eqref{modelo1} (the smaller the dots, the more advanced the times%
).
Dashed lines describe traveling wave profiles obtained from \eqref{singular} for  different { values of $\sigma$}.
In all cases we have used $\nu = c = 1$,  $F(u) = u(1-u)$ and $m=2$.
{A)} Time evolution of a compactly supported initial condition with null Dirichlet conditions. The traveling wave profile depicted, corresponding to
$\sigma_{ent} = 0,437803$, constitutes a super-solution for the time-dependent solution.
{B)} Time evolution of a regular initial condition, with Neumann boundary conditions. The traveling wave profile depicted corresponds to
$\sigma= 0,57 \in ]\sigma_{ent},\sigma_{smooth}[$.
}\label{cerca}
\end{figure}

\subsection{The $L^p$-continuity w.r.t. the wave speed}

The purpose of this paragraph is to prove the continuity of the traveling profiles constructed in Proposition \ref{propES} with respect to the wave velocity. In order to do that it is convenient to choose a privileged normalization for the traveling profiles, so that we get a family $u^{\mathcal{N}}(\sigma)$ defined in a unique way. We do this as follows:

\begin{definition}
Let $u_\sigma^{\mathcal{N}}$  with  $\sigma \in [\sigma_{ent},+\infty[$ be the family of traveling wave solutions constructed in Proposition \ref{propES}
and enjoying the following additional properties:
\begin{itemize}

\item If $\sigma > \sigma_{smooth}$ we set $u_\sigma^{\mathcal{N}}(0)=u^*(\sigma_{smooth})=u^+(\sigma_{smooth})=u^-(\sigma_{smooth})$,

\item If $\sigma_{smooth}\ge \sigma > \sigma_{ent}$ we set $ \lim_{\xi \to 0^\mp} u_\sigma^{\mathcal{N}}(\xi)=u^\pm(\sigma)$.

\item If $\sigma = \sigma_{ent}$ we set $\lim_{\xi \uparrow 0}u_\sigma^{\mathcal{N}}(\xi)=u^+(\sigma)$ and $u_\sigma^{\mathcal{N}}(\xi)=0,$ for $\xi>0$.

\end{itemize}
\end{definition}
%%%%%%%%%%%%%%

We assume in this Section that $F$ satisfies
\begin{equation}\label{arap}
\hbox{\rm there is some $p\geq 1$ such that $\liminf_{u \to 0}\frac{F(u)}{u^p} = k \in]0, +\infty]$.}
\end{equation}
Now we are going to prove that this specially parameterized family of traveling wave solutions has the  property of continuous dependence with respect to the wave speed stated in Theorem \ref{parto}, that is,
\begin{equation}\label{arap2}
\lim_{\sigma_1 \to \sigma_2} \|u^{\mathcal{N}}_{\sigma_1}(t)-u^{\mathcal{N}}_{\sigma_2}(t)Ê\|_{L^p(\R)} +   \|u^{\mathcal{N}}_{\sigma_1}(t)-u^{\mathcal{N}}_{\sigma_2}(t)Ê\|_{L^\infty (\R)}= 0,
\end{equation}
for any $t\geq 0$ and any $\sigma_1, \sigma_2 \in [\sigma_{ent},+\infty[$.

%%%%%%%%%%%%%%
%%%%%%%%%%
\begin{figure}[h]
\begin{center}
\includegraphics[width=9cm]{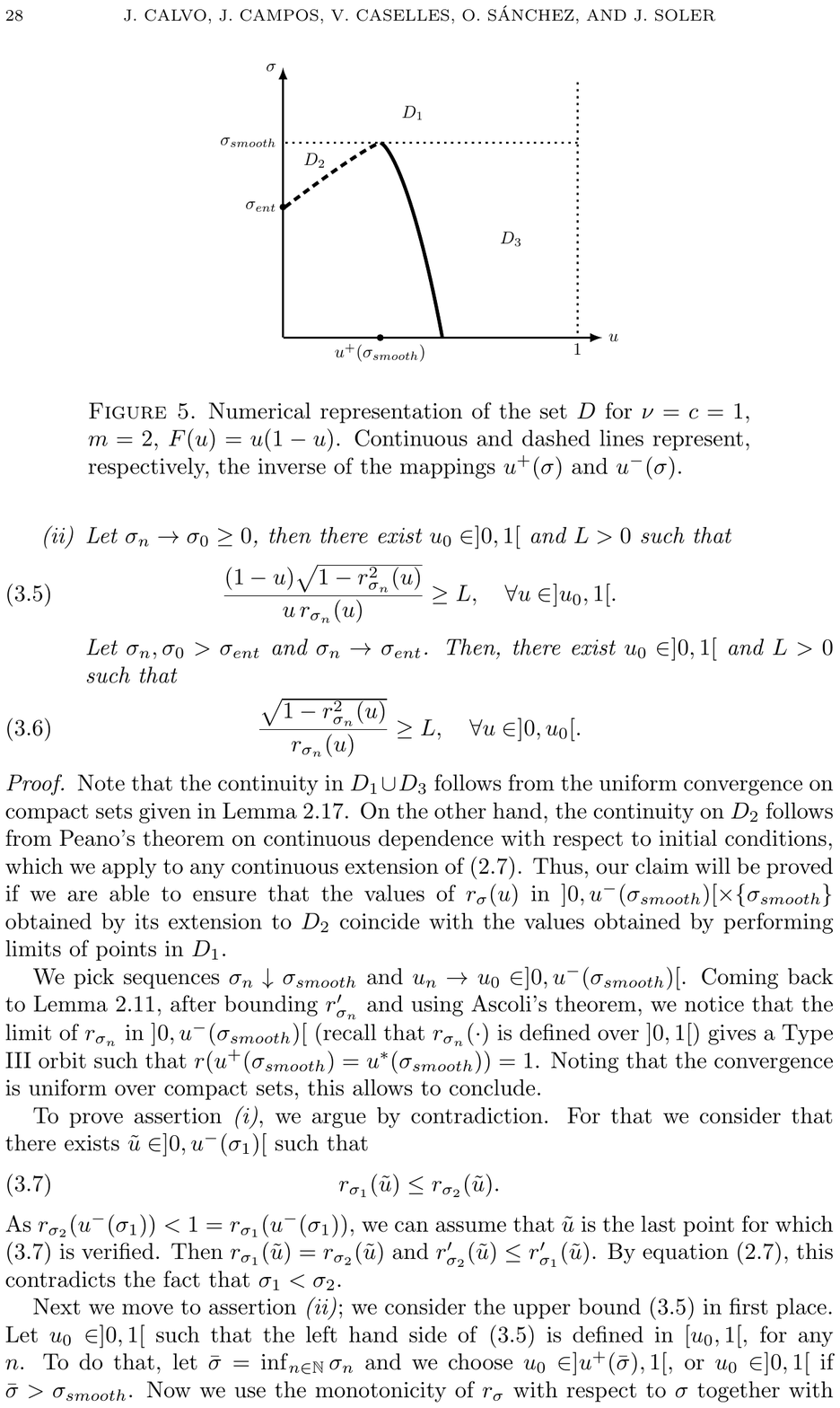}
\end{center}
\caption{Numerical representation of the set $D$ for $\nu = c = 1$,  $m=2$, $F(u) = u(1-u)$. Continuous and dashed lines represent, respectively, the inverse of the mappings $u^+(\sigma)$ and $u^-(\sigma)$.   }\label{bifurcation}
\end{figure}
%%%%%%%%%%%%%%%%%%%
To prove the previous result we will need to argue with pairs of the form $(u,\sigma)\in ]0,1[\times]0,+\infty[$. We consider the following subsets:
\begin{eqnarray*}
D_1&=&]0,1[\times]\sigma_{smooth},+\infty[,
\\
D_2&=&\{(u,\sigma)\in ]0,1[\times]0,+\infty[ : \sigma_{ent}< \sigma \le \sigma_{smooth},\ 0<u< u^-(\sigma)\},
\\
D_3&=&\{(u,\sigma)\in ]0,1[\times]0,+\infty[ : 0< \sigma \le \sigma_{smooth},\ u^+(\sigma)< u<1\},
\end{eqnarray*}
and finally we let $D=D_1\cup D_2\cup D_3$.

We observe that the map $(u,\sigma) \mapsto r_\sigma (u)$ is defined at least on $D_1\cup D_3$. To extend it to $D_2$ we choose
$r_\sigma (u)$ as the solution of (\ref{u-grafo}) defined over $]0,u^- (\sigma)[$ and such that $r_\sigma (u^-(\sigma))=1$.
This extension is justified by Lemma \ref{viejo}.

\begin{lemma}\label{proprsigma}
The function $(u,\sigma) \in D \mapsto r_\sigma (u)$ is continuous. Moreover, it has the following properties:
\begin{enumerate}
\item[(i)] Let $\sigma_{ent} < \sigma_1 < \sigma_2 \leq \sigma_{smooth}$, then $r_{\sigma_1}(u)> r_{\sigma_2}(u)$, for any $u \in ]0, u^-(\sigma_1)[$  (which is their common interval of definition).
\item[(ii)] Let $\sigma_n \to \sigma_0 \geq 0$, then there  exist  $u_0 \in ]0,1[$ and $L>0$ such that
 \begin{equation}
 \label{2*}
 \displaystyle \frac{(1-u) \sqrt{1 - r^2_{\sigma_n}(u)}}{u \, r_{\sigma_n}(u)} \geq L, \quad \forall u \in]u_0, 1[.
\end{equation}
 Let $ \sigma_n,\sigma_0 > \sigma_{ent}$ and $\sigma_n \to \sigma_{ent}$. Then, there exist $u_0 \in ]0,1[$ and $L>0$ such that
 \begin{equation}
 \label{3*}
 \displaystyle \frac{ \sqrt{1 - r^2_{\sigma_n}(u)}}{ r_{\sigma_n}(u)} \geq L, \quad \forall u \in]0, u_0[.
\end{equation}
\end{enumerate}
\end{lemma}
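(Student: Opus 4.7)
The plan is to establish the three assertions in sequence, leveraging the ODE analysis already built for the regularized system \eqref{regu} and the monotone structure of the Type II/III branches.

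For the continuity of $(u,\sigma) \mapsto r_\sigma(u)$, I would argue separately on each of the three open pieces of $D$. On $D_1$ and $D_3$, the graph $r_\sigma$ is determined by the boundary data at $u=1$ ($r=0$ with slope $\lambda_\sigma$), which depends continuously on $\sigma$; standard continuous dependence for the regularized flow \eqref{regu}, combined with the monotone reparameterization relating it to \eqref{singular}, then gives continuity in both variables, refining Lemma \ref{cont} to joint continuity rather than just one-sided sequential convergence. On $D_2$, I would parameterize Type III orbits by their starting point $(u^-(\sigma),1)$, using Proposition \ref{viejo3} together with continuity (and strict monotonicity) of $\sigma \mapsto u^-(\sigma)$ from Proposition \ref{propdisc}, and appeal to the local uniqueness granted by Lemma \ref{viejo} near $r=1$ to transfer continuous dependence from initial data to $(u,\sigma)$.

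For assertion (i), I would run essentially the argument of Lemma \ref{compara} on the Type III branch. Given $\sigma_{ent} < \sigma_1 < \sigma_2 \le \sigma_{smooth}$, the strict monotonicity $u^-(\sigma_1) < u^-(\sigma_2)$ forces $r_{\sigma_1}(u^-(\sigma_1))=1 > r_{\sigma_2}(u^-(\sigma_1))$. If $r_{\sigma_1}$ and $r_{\sigma_2}$ agreed at some first crossing $\tilde u \in ]0,u^-(\sigma_1)[$ coming from the right, the geometric configuration would force $\frac{dr_{\sigma_1}}{du}(\tilde u) \geq \frac{dr_{\sigma_2}}{du}(\tilde u)$; but the explicit form \eqref{u-grafo} gives
\[
\frac{dr_{\sigma_2}}{du}(\tilde u) - \frac{dr_{\sigma_1}}{du}(\tilde u) = \frac{\sigma_2-\sigma_1}{c\,\tilde u^m} > 0,
\]
a contradiction.

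For the bounds in (ii), I would use the hyperbolic structure at the two endpoints $(1,0)$ and $(0,r^*(\sigma))$. Near $u=1$, the unstable manifold theorem used in Theorem \ref{caaampos} yields $r_\sigma(u) = |\lambda_\sigma|(1-u) + o(1-u)$ with $\sigma \mapsto \lambda_\sigma$ continuous, so the quantity in \eqref{2*} tends to $1/|\lambda_{\sigma_n}|$ as $u\to 1^-$; uniform local control of the expansion on the unstable manifold in the parameter $\sigma$ then delivers a uniform $L>0$ on a common neighborhood $]u_0,1[$ for any sequence $\sigma_n \to \sigma_0 \geq 0$. Near $u=0$, for Type III orbits with $\sigma_n > \sigma_{ent}$, we have $r_{\sigma_n}(u)\to r^*(\sigma_n)<1$, and continuous dependence together with the monotonicity from (i) allows us to dominate $r_{\sigma_n}$ on a neighborhood of $0$ by a function staying strictly below $1$, yielding \eqref{3*}. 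The main obstacle is uniformity of these estimates as the parameter approaches the degenerate thresholds — chiefly $\sigma \to \sigma_{ent}^+$ (where the Type III branch contracts because $u^-(\sigma)\to 0$) and $\sigma = \sigma_{smooth}$ (where the Type II and Type III branches coalesce at $(u^*,1)$); handling these requires using the monotonicity from (i) to sandwich $r_{\sigma_n}$ between graphs whose limits are non-degenerate, and carefully choosing $u_0$ inside $\liminf_n u^-(\sigma_n)$ so that all terms in the bound remain defined.
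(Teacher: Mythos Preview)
Your approach to (i) is identical to the paper's. For the continuity claim and for (ii), your plan is broadly parallel to the paper's but differs in emphasis and leaves one genuine gap.

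\textbf{Continuity.} You treat $D_1,D_2,D_3$ separately using continuous dependence, which is exactly what the paper does (Lemma \ref{cont} for $D_1\cup D_3$, Peano-type dependence on the initial point $(u^-(\sigma),1)$ for $D_2$). However, you do not address the one nontrivial matching: at $\sigma=\sigma_{smooth}$ the domain $D_2$ touches $D_1$, and one must check that the $D_2$-extension of $r_\sigma$ on $]0,u^-(\sigma_{smooth})[$ agrees with the limit of the Type~I graphs $r_{\sigma_n}$ as $\sigma_n\downarrow\sigma_{smooth}$. The paper handles this explicitly by passing to the limit (Ascoli plus the uniqueness in Lemma \ref{viejo}) and identifying the limit as the unique Type~III orbit with $r(u^*(\sigma_{smooth}))=1$. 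Without this step, continuity of $(u,\sigma)\mapsto r_\sigma(u)$ across the seam $\{\sigma=\sigma_{smooth}\}\cap\overline{D_2}$ is not established.

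\textbf{Assertion (ii).} Here the paper's argument is considerably simpler than your primary proposal. Rather than invoking uniform-in-$\sigma$ control of the unstable-manifold expansion near $(1,0)$ (which is plausible but would require real work to make precise), the paper sets $\bar\sigma=\inf_n\sigma_n$ and uses the monotonicity $r_{\sigma_n}\le r_{\bar\sigma}$ from Lemma \ref{compara} together with the fact that $r\mapsto\sqrt{1-r^2}/r$ is decreasing to obtain
\[
\frac{(1-u)\sqrt{1-r_{\sigma_n}^2(u)}}{u\,r_{\sigma_n}(u)}\ \ge\ \frac{1-u}{u}\,\frac{\sqrt{1-r_{\bar\sigma}^2(u)}}{r_{\bar\sigma}(u)},
\]
reducing the estimate to a single fixed profile $r_{\bar\sigma}$, whose limit at $u\to 1$ is $-1/r'_{\bar\sigma}(1)=1/|\lambda_{\bar\sigma}|>0$. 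The same trick (now with (i) in place of Lemma \ref{compara}) handles \eqref{3*} near $u=0$. You do mention this sandwich idea, but only as a remedy for the degenerate thresholds; in fact it is the entire argument and renders the uniform asymptotic expansions unnecessary.
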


\begin{proof}
Note that the continuity in  $D_1\cup D_3$ follows from the uniform convergence on compact sets given in Lemma \ref{cont}.
On the other hand, the continuity on $D_2$ follows from Peano's theorem on continuous dependence with respect to initial conditions, which we apply to any continuous extension of (\ref{u-grafo}). Thus, our claim will be proved if we are able to ensure that the values of $r_\sigma (u)$ in $]0,u^- (\sigma_{smooth})[ \times \{ \sigma_{smooth} \}$ obtained by its extension to $D_2$ coincide with the values  obtained by performing limits of points in $D_1$.

We pick sequences $\sigma_n \downarrow \sigma_{smooth}$ and $u_n \to u_0\in ]0,u^- (\sigma_{smooth})[ $. Coming back to Lemma \ref{viejo}, after  bounding $r'_{\sigma_n}$ and using Ascoli's theorem, we notice that the limit of $r_{\sigma_n}$ in $]0,u^- (\sigma_{smooth})[ $ (recall that $r_{\sigma_n} (\cdot)$ is defined over $]0,1[$) gives a Type III orbit such that $ r (u^+(\sigma_{smooth})=u^*(\sigma_{smooth}))=1$. Noting that the convergence is uniform over compact sets, this allows to conclude.

To prove   assertion \emph{(i)}, we argue by contradiction. For that we consider that there exists $\tilde u \in ]0,u^-(\sigma_1)[$ such that
\begin{equation}\label{kk33}
r_{\sigma_1}(\tilde u)\leq r_{\sigma_2}(\tilde u) .
\end{equation}
As $r_{\sigma_2}( u^-(\sigma_1))<1=r_{\sigma_1}( u^-(\sigma_1))$,
we can assume that $\tilde u$ is the last point for which \eqref{kk33} is verified. Then $r_{\sigma_1}(\tilde u)= r_{\sigma_2}(\tilde u)$ and $r'_{\sigma_2}(\tilde u)\leq r'_{\sigma_1}(\tilde u)$. By equation \eqref{u-grafo}, this contradicts the fact that $\sigma_1 < \sigma_2$.

Next we move to assertion \emph{(ii)}; we consider the upper bound \eqref{2*} in first place. Let $u_0 \in ]0,1[$ such that
the left hand side of \eqref{2*} is defined in $[u_0, 1[$, for any $n$. To do that, let $\bar \sigma = \inf_{n \in\NN} \sigma_n$ and we choose $u_0 \in ]u^+(\bar \sigma), 1[$, or $u_0 \in ]0, 1[$ if $\bar \sigma > \sigma_{smooth}$.
Now we use the monotonicity of $r_{\sigma}$ with respect to $\sigma$ together with the monotonicity of the function $r \mapsto \frac{\sqrt{1 - r^2}}{r}$ to devise the following estimate
\begin{equation}
\label{4*}
\frac{(1-u) \sqrt{1 - r^2_{\sigma_n}(u)}}{u \, r_{\sigma_n}(u)} \geq  \frac{1-u}{u} \frac{\sqrt{1 - r_{\bar \sigma}^2(u)}}{r_{\bar \sigma}(u)}, \qquad \forall u\in ]u_0, 1[.
\end{equation}
The right hand side of the \eqref{4*} is positive and bounded from below by a constant $L>0$, since it is bounded at both  ends of the interval $]u_0, 1[$ (note that $\lim_{u \to 1} \frac{1-u}{u} \frac{\sqrt{1 - r_{\bar \sigma}^2(u)}}{r_{\bar \sigma}(u)} = - \frac{1}{r^\prime_{\bar \sigma}(1)} > 0$).

To deal with the second estimate \eqref{3*}, let $u_0 \in ]0,1[$ in such a way that the left hand side of \eqref{3*} is well-defined in $]0,u_0[$. To do that, we consider again $\bar \sigma = \inf_{n \in\NN} \sigma_n$ and  $u_0 \in ]0, u^-(\bar \sigma)[$, or $u_0 \in ]0, 1[$ if $\bar \sigma > \sigma_{smooth}$, which is consistent with the fact that $\bar \sigma > \sigma_{ent}$. Then, as in the previous case we can estimate from below
\begin{equation}
\label{5*}
\frac{\sqrt{1 - r^2_{\sigma_n}(u)}}{r_{\sigma_n}(u)}
\geq   \frac{\sqrt{1 - r_{\bar \sigma}^2(u)}}{r_{\bar \sigma}(u)}, \qquad \forall u\in ]0, u_0[.
\end{equation}
Now, taking into account
$$
\lim_{u \to 0} \displaystyle  \frac{\sqrt{1 - r_{\bar \sigma}^2(u)}}{r_{\bar \sigma}(u)} = \left\{
\begin{array}{ll}
\displaystyle  \frac{\sqrt{1 - (r^*)^2}}{r^*} , & \mbox{ if } r^*>0, \\
 \\
 +\infty, & \mbox{ if } r^* =0,
\end{array}
\right.
$$
we can deduce that the  right hand side of the \eqref{5*} is positive and bounded from below by a constant $L>0$, which concludes the proof.
\end{proof}

Making use of the graphs $u \mapsto r_\sigma (u)$ we are able to introduce the following function:
$$
G(u,\sigma) = \left\{
\begin{array}{ll}
- \displaystyle \int_{u^*(\sigma_{smooth})}^u \frac{\sqrt{1-r_\sigma^2(v)}}{v r_\sigma (v)}dv,&  \mbox{in}\ D_1,
\\  \\
- \displaystyle \int_{u^-(\sigma)}^u \frac{\sqrt{1-r_\sigma^2(v)}}{v r_\sigma (v)}dv, & \mbox{in}\ D_2,
 \\  \\
 - \displaystyle \int_{u^+(\sigma)}^u \frac{\sqrt{1-r_\sigma^2(v)}}{v r_\sigma (v)}dv,
& \mbox{in}\ D_3.
\end{array}
\right.
$$
We can use this function to recover the traveling wave profiles $u_\sigma (\xi)$.
\begin{lemma}
\label{reprform}
 For any $\sigma >\sigma_{ent}$  and $\xi \neq 0$ we have that
$$
\frac{c}{\nu}G(u_\sigma^{\mathcal{N}}(\xi),\sigma)=\xi.
$$
\end{lemma}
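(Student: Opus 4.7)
The plan is to integrate the first equation of the planar system \eqref{singular}, which on any interval where $u_\sigma^{\mathcal{N}}$ is smooth (and hence $r_\sigma(u_\sigma^{\mathcal{N}}(\xi))$ is well defined in $]0,1[$) gives a separable ODE
$$\frac{du}{d\xi}=-\frac{c}{\nu}\frac{r_\sigma(u)\,u}{\sqrt{1-r_\sigma(u)^2}}.$$
Dividing by the right-hand side and integrating between $\xi_0$ and $\xi$ in such an interval of smoothness, one obtains the core identity
$$-\int_{u_\sigma^{\mathcal{N}}(\xi_0)}^{u_\sigma^{\mathcal{N}}(\xi)}\frac{\sqrt{1-r_\sigma^2(v)}}{v\,r_\sigma(v)}\,dv=\frac{c}{\nu}\,(\xi-\xi_0).$$
It then suffices to recognize the left-hand side, for a suitable $\xi_0$, as $G(u_\sigma^{\mathcal{N}}(\xi),\sigma)$ according to the region in which $(u_\sigma^{\mathcal{N}}(\xi),\sigma)$ lives.

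I would handle the three cases of the normalization separately. In $D_1$ ($\sigma>\sigma_{smooth}$) the profile is globally smooth with $u_\sigma^{\mathcal{N}}(0)=u^*(\sigma_{smooth})$, so setting $\xi_0=0$ gives the desired identity directly. For $\sigma_{ent}<\sigma\le\sigma_{smooth}$ and $\xi<0$, the profile on $]-\infty,0[$ comes from a Type II orbit satisfying $\lim_{\xi\uparrow 0}u_\sigma^{\mathcal{N}}(\xi)=u^+(\sigma)$; taking the limit $\xi_0\uparrow 0$ in the integral identity reproduces the definition of $G$ on $D_3$. Symmetrically, for $\xi>0$ the Type III orbit satisfies $\lim_{\xi\downarrow 0}u_\sigma^{\mathcal{N}}(\xi)=u^-(\sigma)$, which yields the $D_2$ formula after $\xi_0\downarrow 0$. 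At $\sigma=\sigma_{smooth}$ the identities match continuously at $\xi=0$ because $u^+(\sigma_{smooth})=u^-(\sigma_{smooth})=u^*(\sigma_{smooth})$.

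The main obstacle is to justify these improper limits, i.e., to show that the integrals appearing in $G$ are absolutely convergent at the endpoints $u^\pm(\sigma)$ (where $r_\sigma\to 1$) and, in $D_2$ and $D_3$, as far as the graph of the solution extends. Near $u^\pm(\sigma)$ the orbit reaches the edge $r=1$ transversally away from the degenerate bouncing point $u^*(\sigma)$, and the change of variables $s=\sqrt{1-r^2}$ already used in the proof of Lemma~\ref{viejo} (together with the asymptotics of Lemma~\ref{converge}, giving $\sqrt{1-r_\sigma^2(v)}\sim C\,|v-u^\pm(\sigma)|^{1/2}$) shows that $\sqrt{1-r_\sigma^2(v)}/(v\,r_\sigma(v))$ is integrable up to $u^\pm(\sigma)$. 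The limit $\xi_0\to 0^\pm$ can therefore be taken inside the integral by dominated convergence, and the identity extends to the whole interval of definition.

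Finally, I would use Lemma~\ref{proprsigma}(ii) to control the integrand at the other end. The bound \eqref{3*} ensures that for $\sigma>\sigma_{ent}$ the factor $\sqrt{1-r_\sigma^2(v)}/r_\sigma(v)$ stays bounded away from zero near $v=0$, so the integrand behaves like a harmless constant multiple of $1/v$ there; this is locally integrable on any interval $[u_\sigma^{\mathcal{N}}(\xi),u^-(\sigma)]$ with $u_\sigma^{\mathcal{N}}(\xi)>0$, which holds for every finite $\xi$ when $\sigma>\sigma_{ent}$. This is precisely where the exclusion of $\sigma=\sigma_{ent}$ in the statement plays its role, since then $u^-(\sigma_{ent})=0$ and $u_\sigma^{\mathcal{N}}$ vanishes identically on $\xi>0$, placing it outside the domain of $G$.
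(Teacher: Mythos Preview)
Your approach is essentially the same as the paper's: integrate the separable ODE on an interval of smoothness and pass to the limit $\xi_0\to 0^\pm$. The difference lies in how you justify the convergence of the improper integral at $u^\pm(\sigma)$.

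The paper's argument is much shorter and avoids the asymptotic analysis you invoke. Once the identity
$$-\int_{u_\sigma^{\mathcal{N}}(\xi_0)}^{u_\sigma^{\mathcal{N}}(\xi)}\frac{\sqrt{1-r_\sigma^2(v)}}{v\,r_\sigma(v)}\,dv=\frac{c}{\nu}(\xi-\xi_0)$$
is established on $[\xi_0,\xi]\subset\,]0,\infty[$ (say), the integrand is positive, so the left-hand side is monotone in $\xi_0$; since the right-hand side converges to the finite value $\frac{c}{\nu}\xi$ as $\xi_0\downarrow 0$, the integral converges automatically. No separate estimate on the rate at which $r_\sigma\to 1$ is needed.

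Your route via Lemma~\ref{converge} works for $\sigma_{ent}<\sigma<\sigma_{smooth}$, but has a gap at $\sigma=\sigma_{smooth}$: there $u^+(\sigma)=u^-(\sigma)=u^*(\sigma)$, the orbit reaches $r=1$ \emph{tangentially} at the bouncing point (the constant $\alpha$ in the proof of Lemma~\ref{converge} vanishes), and the asymptotic $\sqrt{1-r_\sigma^2(v)}\sim C|v-u^\pm|^{1/2}$ no longer holds. The paper's monotonicity argument sidesteps this entirely. Your final paragraph on behavior near $v=0$ is also unnecessary here: for fixed $\xi$ the integration stays on a compact subinterval of $]0,u^-(\sigma)]$ where the integrand is continuous, so no control is needed at $v=0$ for this lemma (that control is used later, in Proposition~\ref{propG}).
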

\begin{proof}
We argue first for $\xi> 0$. Choose $0<\xi_1<\xi$, and integrate (\ref{r-sigma}) between $\xi_1$ and $\xi$ to get
$$
-\frac{c}{\nu}\int_{\xi_1}^{\xi}
 \frac{\sqrt{1-r_\sigma^2(u_{\sigma}^{\mathcal{N}}(\eta))}}{u_{\sigma}^{\mathcal{N}}(\eta) r_\sigma (u_{\sigma}^{\mathcal{N}}(\eta))}
(u_{\sigma}^{\mathcal{N}})'(\eta) d \eta =\xi -\xi_1.
$$
Now, after the change of variables $v=u_{\sigma}^{\mathcal{N}}(\eta)$, we arrive to
$$
-\frac{c}{\nu}\int_{u_{\sigma}^{\mathcal{N}}(\xi_1)}^{u_{\sigma}^{\mathcal{N}}(\xi)}
 \frac{\sqrt{1-r_\sigma^2(v)}}{v r_\sigma (v)}dv=\xi -\xi_1.
$$
Finally, we let $\xi_1\to 0$; observe that the integrand is positive and in particular the integral exists. We can argue in a similar way if  $\xi< 0$.
\end{proof}
\begin{remark}
Note that for $\sigma_{ent}\leq \sigma < \sigma_{smooth}$ the traveling wave solutions $u_\sigma^{\mathcal{N}}$ are not well
defined for $\xi=0$.
\end{remark}

Let us focus now on the properties of  $G$.
\begin{proposition}\label{propG}
The following properties are satisfied:
\begin{enumerate}
\item The function $G$ is continuous over $D$.
\item The following assertions give the behavior of $G$ at the boundary of $D$:
\begin{itemize}
\item[  i)] $G$ tends to zero when we approach any point of the set
$$
\{(u^+(\sigma),\sigma) : 0<\sigma\leq \sigma_{smooth} \}\cup \{(u^-(\sigma),\sigma) : \sigma_{ent}<\sigma\leq \sigma_{smooth}\}\,,
$$
\item[ ii)] $G$ tends to $-\infty$ when we approach any point of the set  $\{1\} \times  [0,\infty[$\,,
\item[iii)] $G$ tends to $+\infty$ when we approach any point of the set $\{0\} \times ] \sigma_{ent},\infty[$.
\end{itemize}
\end{enumerate}
\end{proposition}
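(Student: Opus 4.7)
My plan is to treat $G$ as a parameter-dependent integral and extract continuity and boundary blow-up from the continuity of $r_\sigma$ together with the two lower bounds \eqref{2*}, \eqref{3*} supplied by Lemma~\ref{proprsigma}. All arguments hinge on the fact that, for any $(u,\sigma) \in D$, the integration interval in the definition of $G$ is a compact subinterval of $]0,1[$, so the integrand is continuous and bounded there; difficulties only arise at the boundary of $D$.

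For the continuity claim on $D$, I would fix $(u_0,\sigma_0) \in D$ and any sequence $(u_n,\sigma_n) \to (u_0,\sigma_0)$. By Propositions \ref{s1} and \ref{propdisc}, the reference points $u^\pm(\sigma)$ and $u^*(\sigma_{smooth})$ are continuous in $\sigma$, so for $n$ large the integration interval defining $G(u_n,\sigma_n)$ is contained in a fixed compact subset $[a,b] \subset \,]0,1[$ on which $r_{\sigma_0}$ is bounded away from $0$. By Lemma~\ref{proprsigma}, $r_{\sigma_n} \to r_{\sigma_0}$ uniformly on $[a,b]$, hence the integrand converges uniformly and dominated convergence yields $G(u_n,\sigma_n) \to G(u_0,\sigma_0)$. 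The only delicate case is the gluing at $\sigma=\sigma_{smooth}$, where $G$ is defined by different formulas on $D_1$, $D_2$, $D_3$; this is resolved by invoking $u^+(\sigma_{smooth}) = u^-(\sigma_{smooth}) = u^*(\sigma_{smooth})$, which makes the three reference points agree in the limit so that the integrals match.

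For the boundary behavior I would apply the three cases separately. For (i), near $u = u^\pm(\sigma_0)$ the integrand $\frac{\sqrt{1-r_\sigma^2(v)}}{v\,r_\sigma(v)}$ remains bounded (the numerator vanishes while $v\,r_\sigma$ is bounded below) and the interval of integration shrinks to length zero, giving $G \to 0$. For (ii), near $u=1$ I rearrange \eqref{2*} as $\frac{\sqrt{1-r_\sigma^2(v)}}{v\,r_\sigma(v)} \geq \frac{L}{1-v}$ on some $]u_0,1[$; since the reference point of $G$ stays strictly below $1$ (it equals $u^*(\sigma_{smooth})$ or $u^+(\sigma)\le u^+(0)<1$), integration yields $G(u,\sigma) \leq C - L\log\frac{1}{1-u} \to -\infty$. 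For (iii), near $u=0$ with $\sigma_0 > \sigma_{ent}$ the estimate \eqref{3*} gives $\frac{\sqrt{1-r_\sigma^2(v)}}{v\,r_\sigma(v)} \geq \frac{L}{v}$ on some $]0,u_0[$; the reference point is $u^*(\sigma_{smooth})$ in $D_1$ or $u^-(\sigma)$ in $D_2$, both of which stay bounded away from zero when $\sigma_0>\sigma_{ent}$, so $u$ lies below the reference point and the sign of $G$ flips to give $G(u,\sigma) \geq L\log\frac{u_0}{u} - C \to +\infty$. The hypothesis $\sigma_0 > \sigma_{ent}$ enters precisely to guarantee $r^*(\sigma_0)<1$, which is what makes \eqref{3*} usable uniformly in a neighborhood.

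The main obstacle I anticipate is the bookkeeping at $\sigma = \sigma_{smooth}$, where sequences can cross between $D_1$ (with reference point $u^*(\sigma_{smooth})$) and $D_2,D_3$ (with references $u^\pm(\sigma)$ tending to $u^*(\sigma_{smooth})$). The coincidence of these reference points at $\sigma_{smooth}$, together with the uniform-on-compact-sets continuity provided by Lemma~\ref{proprsigma}, is exactly what allows the three piecewise definitions to paste into a single continuous function on $D$; once this is set up, the boundary blow-ups in (ii) and (iii) are immediate applications of the sharp lower bounds in Lemma~\ref{proprsigma}(ii).
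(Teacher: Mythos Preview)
Your proposal is correct and follows essentially the same approach as the paper: continuity of $(u,\sigma)\mapsto r_\sigma(u)$ for item~(1) and~(2)(i), and the lower bounds \eqref{2*}, \eqref{3*} from Lemma~\ref{proprsigma}(ii) to produce the logarithmic divergences in~(2)(ii) and~(2)(iii). The paper handles~(1) and~(2)(i) slightly more economically by extending $r_\sigma$ continuously by~$1$ on $]0,1[\times]0,\infty[\setminus D$, which makes the integrand continuous across the varying domain boundaries and sidesteps the bookkeeping you would otherwise need when the integration endpoints $u^\pm(\sigma_n)$ drift.
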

\begin{proof}
Since the function $(u,\sigma) \in D \mapsto r_\sigma (u)$ is continuous (Lemma \ref{proprsigma}), then $r_{\sigma}$ can be extended continuously by $1$ to  $]0,1[\times]0,\infty[ \setminus D$.
Thus, $G$ is also continuous because it is given by the integral of a continuous function depending continuously on $\sigma$, the integral being extended to intervals
that also depend continuously on $(u,\sigma)$.
This proves  assertions {\it (1).} and {\it (2).i).}

To prove {\it (2). ii)}, let $\sigma_n \to \sigma_0 $ and $u_n\to 1$. By Lemma \ref{proprsigma}.$(ii)$
$$ \frac{\sqrt{1-r^{2}_{\sigma_n}(u)}}{u r_{\sigma_n} (u)}\ge \frac{L}{1-u}$$
is satisfied on  some interval $]u_0,1[$. This leads us to
\begin{equation*}
 \label{unos}
G(u,\sigma_n)\leq h+L \ln (1-u),
\end{equation*}
which holds for $u_0<u<1$, where $h$, $L$ are positive constants not depending on $n$. Then, {\it (2). ii)} follows.

Let now $\sigma_n \to \sigma_0 > \sigma_{ent}$ and $u_n \to 0$. Using Lemma \ref{proprsigma}.$(ii)$ again we find an interval $]0,u_0[$ for which
 $$ \frac{\sqrt{1-r^{2}_{\sigma_n}(u)}}{u r_{\sigma_n} (u)}\ge \frac{L}{u}.$$
After integration in $[u^*(\sigma_{smooth}),u]$ if $\sigma_0 \geq \sigma_{smooth}$ or in
$[u^-(\sigma),u]$ if $\sigma_0 < \sigma_{smooth}$ (both intervals coincide if $\sigma_0 = \sigma_{smooth}$)
we obtain
\begin{equation*}
\label{doss}
 G(u,\sigma_n) \geq h - L \ln (u),
\end{equation*}
for some positive constants $h,\ L$  which do not depend on $n$.  This proves {\it (2). iii)}.

\end{proof}

\begin{lemma}
\label{AUX}
The map $\sigma \to  u_\sigma^{\mathcal{N}}(\xi) $ is monotonically decreasing for $\xi >0$ and monotonically increasing for $\xi<0$.
\end{lemma}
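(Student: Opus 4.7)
The plan is to trade monotonicity of $\sigma\mapsto u_\sigma^{\mathcal N}(\xi)$ at fixed $\xi$ for monotonicity in $\sigma$ of the function $G(\cdot,\sigma)$ at fixed $u$, exploiting the representation $\tfrac{c}{\nu}G(u_\sigma^{\mathcal N}(\xi),\sigma)=\xi$ from Lemma \ref{reprform}. Fixing $\xi\neq 0$ and $\sigma_{ent}<\sigma_1<\sigma_2$, and writing $u_i:=u_{\sigma_i}^{\mathcal N}(\xi)$, we have the identity $G(u_1,\sigma_1)=G(u_2,\sigma_2)=\nu\xi/c$. I will argue by contradiction: assuming the ordering of $u_1,u_2$ is opposite to the one asserted by the lemma, the two integration intervals defining $G(u_1,\sigma_1)$ and $G(u_2,\sigma_2)$ become nested, and combined with a strict pointwise ordering between the integrands this forces a strict inequality between the two values of $G$.

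The quantitative ingredients are twofold. First, the strict ordering $r_{\sigma_1}(v)>r_{\sigma_2}(v)$ on the common domain (Lemma \ref{compara}, plus Lemma \ref{proprsigma}(i) to cover the boundary behaviour) combined with the fact that $r\mapsto\sqrt{1-r^2}/r$ is strictly decreasing on $(0,1)$ implies that the integrand $\sqrt{1-r_\sigma^2(v)}/(v\,r_\sigma(v))$ appearing in $G$ is strictly increasing in $\sigma$ at each fixed $v$. Second, the integration endpoints move monotonically: $\sigma\mapsto u^-(\sigma)$ is strictly increasing on $[\sigma_{ent},\sigma_{smooth}]$ (Proposition \ref{propdisc}) while $\sigma\mapsto u^+(\sigma)$ is strictly decreasing on $(0,\sigma_{smooth}]$ (Lemma \ref{luegotellamo}), and both reach $u^*(\sigma_{smooth})$ at $\sigma=\sigma_{smooth}$, so that the three expressions for $G$ on $D_1,D_2,D_3$ paste continuously along the common boundary. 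This allows one to treat the mixed case $\sigma_1\leq\sigma_{smooth}<\sigma_2$ within a single nested-interval argument.

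For $\xi>0$ the relevant integration interval in the defining formula of $G(u_i,\sigma_i)$ is $[u_i,L(\sigma_i)]$, with $L(\sigma):=u^-(\sigma)$ on $[\sigma_{ent},\sigma_{smooth}]$ and $L(\sigma):=u^*(\sigma_{smooth})$ above, so $L$ is non-decreasing. Suppose $u_1,u_2$ are ordered opposite to the lemma's claim; then the two intervals are nested and the pointwise inequality between integrands produces a strict inequality $G(u_1,\sigma_1)\neq G(u_2,\sigma_2)$, a contradiction. For $\xi<0$ one works in $D_3$ (or in $u>u^*(\sigma_{smooth})$ on $D_1$), where the integration interval becomes $[\ell(\sigma_i),u_i]$ with $\ell(\sigma):=u^+(\sigma)$ on $(0,\sigma_{smooth}]$ and $\ell(\sigma):=u^*(\sigma_{smooth})$ above; now $\ell$ is non-increasing, and it is precisely this flipped endpoint monotonicity that reverses the direction of the conclusion in going from $\xi>0$ to $\xi<0$.

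The one point requiring separate treatment is the endpoint $\sigma=\sigma_{ent}$, where Lemma \ref{reprform} fails. For $\xi>0$ the comparison is immediate since $u_{\sigma_{ent}}^{\mathcal N}(\xi)=0$ while $u_\sigma^{\mathcal N}(\xi)>0$ for any $\sigma>\sigma_{ent}$, so $\sigma_{ent}$ sits automatically at the correct extreme. For $\xi<0$ the profile at $\sigma_{ent}$ is genuine and one extends the monotonicity by passing to the limit $\sigma\downarrow\sigma_{ent}$ using the continuous dependence of $u_\sigma^{\mathcal N}$ on $\sigma$ established in Theorem \ref{parto}(iii). I expect the main technical burden to be the careful bookkeeping of interval inclusions in the mixed regime around $\sigma=\sigma_{smooth}$; no deeper obstacle is foreseen.
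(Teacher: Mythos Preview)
Your approach coincides with the paper's: both rely on the representation $\frac{c}{\nu}G(u_\sigma^{\mathcal N}(\xi),\sigma)=\xi$ from Lemma~\ref{reprform}, the monotonicity in $\sigma$ of the integrand $\sqrt{1-r_\sigma^2(v)}/(v\,r_\sigma(v))$ (via Lemma~\ref{compara} and Lemma~\ref{proprsigma}(i)), and the monotonicity of the endpoints $u^\pm(\sigma)$. The paper's proof is a terse one-paragraph sketch listing exactly these ingredients without spelling out the contradiction or the mixed regime around $\sigma_{smooth}$; your proposal is a fuller elaboration of the same idea, including the endpoint case $\sigma=\sigma_{ent}$, which the paper does not treat separately.

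One caution on execution: for $\xi>0$, with $\sigma_1<\sigma_2$ and $L$ non-decreasing, the intervals $[u_1,L(\sigma_1)]$ and $[u_2,L(\sigma_2)]$ are nested when $u_1\ge u_2$, \emph{not} when $u_1<u_2$ as your contradiction hypothesis would have it. If you work the inequalities through, the argument actually yields that $\sigma\mapsto u_\sigma^{\mathcal N}(\xi)$ is \emph{increasing} for $\xi>0$ and \emph{decreasing} for $\xi<0$; the two directions in the stated lemma appear to be transposed. Your own treatment of $\sigma=\sigma_{ent}$ already hints at this: there $u_{\sigma_{ent}}^{\mathcal N}(\xi)=0$ is the \emph{smallest} value, which places the smallest $\sigma$ at the bottom, consistent with ``increasing'' rather than ``decreasing'' on $\xi>0$. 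The paper's proof is too condensed to expose this, and its later uses of the lemma are insensitive to the swap.
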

\begin{proof}
Thanks to Lemma \ref{proprsigma}. \emph{(i)} we have that the mapping $\sigma \mapsto \frac{\sqrt{1-r_\sigma^2(v)}}{r_\sigma(v)}$ is monotonically increasing for any fixed $v\in [0,1]$. Next, we note that $u<u^-$ in $D_2$, with $\sigma \mapsto u^-$ increasing. We also have that $u>u^+$ in $D_3$, with $\sigma \mapsto u^+$ decreasing. We combine the previous information with the representation formula for $G$  given by Lemma \ref{reprform} to obtain the result.
\end{proof}

\begin{proposition}\label{propUN}
Let $\{\sigma_n \}\subset [\sigma_{ent},+\infty[$ and $\sigma_n \to \sigma_0>\sigma_{ent}$. Then
 for any $T>0$ the sequence $u_{\sigma_n}^{\mathcal{N}}$ converges uniformly on $[-T,T] \backslash \{0\}$.
\end{proposition}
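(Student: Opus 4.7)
The plan is to exploit the implicit representation $\frac{c}{\nu}G(u_\sigma^{\mathcal{N}}(\xi),\sigma)=\xi$ from Lemma \ref{reprform}, which recovers each profile by inverting $u\mapsto G(u,\sigma)$, a continuous strictly decreasing map (its $u$-derivative is $-\sqrt{1-r_\sigma^2(u)}/(u\,r_\sigma(u))<0$). I would argue in three steps.

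\textbf{Step 1 (Pointwise convergence).} For fixed $\xi\in[-T,T]\setminus\{0\}$, write $u_n:=u_{\sigma_n}^{\mathcal{N}}(\xi)$, so $G(u_n,\sigma_n)=\nu\xi/c$. By Proposition \ref{propG}.(2).ii)-iii), $G$ blows up to $\mp\infty$ as $u\to 1^-$ or $u\to 0^+$ uniformly for $\sigma$ in a neighborhood of $\sigma_0$; hence the $u_n$ stay in a compact subinterval $[\alpha,\beta]\subset]0,1[$ independent of $n$. Any cluster point $u^*$ of $(u_n)$ lies in $D$, and the continuity of $G$ given by Proposition \ref{propG}.(1) yields $G(u^*,\sigma_0)=\nu\xi/c$; strict monotonicity of $G(\cdot,\sigma_0)$ forces $u^*=u_{\sigma_0}^{\mathcal{N}}(\xi)$, proving pointwise convergence at every $\xi\neq 0$.

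\textbf{Step 2 (Uniform convergence on intervals bounded away from $0$).} On each interval $[\delta,T]$, the limit profile $u_{\sigma_0}^{\mathcal{N}}$ is smooth (hence continuous), each $u_{\sigma_n}^{\mathcal{N}}$ is strictly decreasing in $\xi$, and pointwise convergence has just been established. P\'olya's theorem (monotone functions converging pointwise to a continuous function on a compact interval converge uniformly) gives uniform convergence on $[\delta,T]$, and an identical argument handles $[-T,-\delta]$.

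\textbf{Step 3 (Control near the discontinuity).} This is where the main care is required, since both the limit and approximants may jump at $\xi=0$. I use that, by construction, $\lim_{\xi\to 0^+}u_\sigma^{\mathcal{N}}(\xi)=u^-(\sigma)$ and that $\sigma\mapsto u^-(\sigma)$ is continuous on $]\sigma_{ent},\sigma_{smooth}]$ by Proposition \ref{propdisc}.(3) (and trivially for $\sigma>\sigma_{smooth}$, where the role of $u^-$ is played by $u^*(\sigma_{smooth})$ in the normalization). Given $\eta>0$, first choose $\delta>0$ small enough that $0\leq u^-(\sigma_0)-u_{\sigma_0}^{\mathcal{N}}(\delta)<\eta/3$, which is possible by continuity of $u_{\sigma_0}^{\mathcal{N}}$ up to $0^+$. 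Then for $n$ large, Step 1 gives $|u_{\sigma_n}^{\mathcal{N}}(\delta)-u_{\sigma_0}^{\mathcal{N}}(\delta)|<\eta/3$ and continuity of $u^-$ gives $|u^-(\sigma_n)-u^-(\sigma_0)|<\eta/3$. For any $\xi\in]0,\delta]$, the monotonicity of each profile in $\xi$ yields
\[
u_{\sigma_n}^{\mathcal{N}}(\delta)\leq u_{\sigma_n}^{\mathcal{N}}(\xi)\leq u^-(\sigma_n),\qquad u_{\sigma_0}^{\mathcal{N}}(\delta)\leq u_{\sigma_0}^{\mathcal{N}}(\xi)\leq u^-(\sigma_0),
\]
and both ranges lie in an interval of length less than $\eta$, so $|u_{\sigma_n}^{\mathcal{N}}(\xi)-u_{\sigma_0}^{\mathcal{N}}(\xi)|<\eta$ uniformly on $]0,\delta]$. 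A symmetric argument using $u^+$ handles $[-\delta,0[$. Combined with Step 2, this gives the desired uniform convergence on $[-T,T]\setminus\{0\}$.

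The main obstacle is Step 3: one must rule out oscillations of $u_{\sigma_n}^{\mathcal{N}}$ near the discontinuity, and here the key ingredients are the continuity of the jump endpoints $u^\pm(\sigma)$ (guaranteed by Proposition \ref{propdisc} precisely because $\sigma_0>\sigma_{ent}$) together with the monotonicity of each profile in $\xi$. The hypothesis $\sigma_0>\sigma_{ent}$ is used essentially here, since at $\sigma=\sigma_{ent}$ one has $u^-(\sigma_{ent})=0$ but $u^-(\sigma)>0$ for $\sigma>\sigma_{ent}$, so approaching $\sigma_{ent}$ would create a change of topology of the support that would spoil this uniform estimate.
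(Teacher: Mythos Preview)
Your proof is correct. Both your argument and the paper's rest on the implicit representation $\tfrac{c}{\nu}G(u_\sigma^{\mathcal{N}}(\xi),\sigma)=\xi$ together with the boundary behavior of $G$ from Proposition \ref{propG}, and both single out the point $\xi=0$ for special treatment via the continuity of $u^\pm(\sigma)$. The organization differs: the paper extends each profile continuously to $\xi=0$ (by $u^-(\sigma_n)$ or $u^*(\sigma_{smooth})$) and then verifies uniform convergence on $[0,T]$ directly through the sequential criterion, taking an arbitrary sequence $\xi_n\to\xi_0\ge 0$ and showing $u_{\sigma_n}^{\mathcal{N}}(\xi_n)\to u_{\sigma_0}^{\mathcal{N}}(\xi_0)$; when $\xi_0=0$ the one-sided bound $u_{\sigma_n}^{\mathcal{N}}(\xi_n)\le u^-(\sigma_n)$ is invoked to select the correct root of $G(\cdot,\sigma_0)=0$. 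Your three-step decomposition (pointwise convergence, P\'olya's theorem on $[\delta,T]$, then an $\varepsilon/3$ sandwich on $]0,\delta]$) is more elementary and makes the role of the monotonicity of the profiles explicit, whereas the paper's moving-point argument is more compact and folds your Steps 2 and 3 into a single compactness step. One small point worth making explicit in your Step 1: for $\sigma_0\le\sigma_{smooth}$ the domain of $G(\cdot,\sigma_0)$ has two components, so strict monotonicity on each piece is not by itself enough for uniqueness; the paper notes that $G$ does not change sign on each component, which ensures the cluster point lands in the correct one.
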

\begin{proof}
To check the uniform convergence of $u_{\sigma_n}^{\mathcal{N}}$ on $]0,T]$ we argue on $[0,T]$, after
extending the functions $u^{\mathcal{N}}_{\sigma_n}$ to $\xi=0$ using either $u^*(\sigma_{smooth})$, in case that
$\sigma_0 \geq \sigma_{smooth}$, or $u^-(\sigma_n)$, in case that $\sigma_0 \in ]\sigma_{ent},\sigma_{smooth}[$.

We use the characterization of the uniform convergence by sequences:   given any fixed sequence $\xi_n\geq 0$ that converges to some $\xi_0> 0$, we are to show that $u^{\mathcal{N}}_{\sigma_n}(\xi_n) - u_{\sigma_0}(\xi_n)$ converges to zero. Since
$$
\frac{c}{\nu}G(u^{\mathcal{N}}_{\sigma_n}(\xi_n),\sigma_n)=\xi_n
$$
is bounded, then  $u^{\mathcal{N}}_{\sigma_n}(\xi_n)$ stays in $]0,1[$ thanks to Proposition \ref{propG}. Any convergent subsequences of  $u^{\mathcal{N}}_{\sigma_n}(\xi_n)$ will converge to a point
$u_0\in ]0,1[$, which may in principle depend on the subsequence. Taking the limit along any such subsequence we get
\begin{equation}\label{kk22}
\frac{c}{\nu}G(u_0,\sigma_0)=\xi_0.
\end{equation}
This relation is solved only 
by $u_0=u_{\sigma_0}^{\mathcal{N}}(\xi_0)$, no matter if we are on $0<u \leq u^-(\sigma_0)$ or on $0<u \leq u^*(\sigma_{smooth})$ -- note that $G$ does not change sign within $D_2$ nor in $D_3,\ D_1 \cap \{u>u^*\}$ or $D_1 \cap \{u<u^*\}$. This shows that in fact $u_{\sigma_n}^{\mathcal{N}}(\xi_n)\to u_{\sigma_0}^{\mathcal{N}}(\xi_0)$ for the whole sequence, and our claim follows.

The case $\xi_n \to \xi_0= 0$ requires a more detailed analysis because in this case \eqref{kk22} may have two solutions: $u^+(\sigma_0)$ and $u^-(\sigma_0)$.  However, in this case $u^{\mathcal{N}}_{\sigma_n}(\xi_n) \leq \lim_{\xi \downarrow 0} u^{\mathcal{N}}_{\sigma_n}(\xi) = u^-(\sigma_n)$ and, as a consequence, the corresponding limit  $\bar u_0$ verifies $\bar u_0 \leq u^-(\sigma_0)$. Using \eqref{kk22} for $\xi_0 =0$, we deduce that $\bar u_0 = u^-(\sigma_0)$, which coincides with the extension we made at the beginning of this proof.

The proof of the uniform convergence over $[-T,0[$ is similar and we omit the details.
\end{proof}

\begin{remark} \label{ult}
 The above result is still valid in the case $\sigma_n \to \sigma_0=\sigma_{ent}$ in the interval $]-T, 0[$, for any $T>0$.
\end{remark}

Let us now prove the uniform continuity of the traveling wave profiles with respect to $\sigma$.
Consider a sequence $\sigma_n \to \sigma_0 \geq \sigma_{ent}$. In a first step we study the case $ \sigma_0 > \sigma_{ent}$.
By Proposition \ref{propUN}, it is enough to prove that $u^{\mathcal{N}}_{\sigma_n}(\xi_n) \to 0$
as $\xi_n \to +\infty$, and $u^{\mathcal{N}}_{\sigma_n}(\xi_n) \to 1$ as $\xi_n \to -\infty$, since this obviously  implies that $\left| u^{\mathcal{N}}_{\sigma_n}(\xi_n) - u^{\mathcal{N}}_{\sigma_0}(\xi_n)\right| \to 0$. Being both assertions similar, let us prove only the first one.
For that, we note that given $\epsilon >0$, there exists $\bar \xi$ such that $u^{\mathcal{N}}_{\sigma_0}(\bar \xi) < \epsilon$. Since, by Proposition \ref{propUN},
$u^{\mathcal{N}}_{\sigma_n}(\bar \xi) \to u^{\mathcal{N}}_{\sigma_0}(\bar \xi)$, there is a value $n_0$ such that $u^{\mathcal{N}}_{\sigma_n}(\bar \xi) < \epsilon$ for $n>n_0$. Let $n_1 \in \NN$ be such that $\xi_n \geq \bar \xi$ for any $n> n_1$. Then, choosing $n > \max \{n_0, n_1\}$, we find that $u^{\mathcal{N}}_{\sigma_n}(\xi_n) \leq u^{\mathcal{N}}_{\sigma_n}(\bar \xi) < \epsilon$, thanks to Lemma \ref{AUX}.

In case that $\sigma_n \to  \sigma_{ent}$ we have to distinguish between $]-\infty, 0[$ and $]0, \infty[$. If $\xi \in ]-\infty, 0[$ we  use Remark \ref{ult} and 
Lemma \ref{AUX} to deduce the same result. For $\xi \in ]0, \infty[$ we conclude by using the bound $u^{\mathcal{N}}_{\sigma_n}(\xi) \leq u^-(\sigma_n)$.

Finally we end the proof of (\ref{arap2}) by
proving the continuity of the traveling waves with respect to $\sigma$ in $L^p(\R)$ , where $p$ is given by (\ref{arap}).
This will conclude the proof of the continuity assertions of Theorem \ref{parto}.

First, let us prove that
\begin{equation}\label{intpi}
u^{\mathcal{N}}_{\sigma} \in L^p(\R^+) \quad \hbox{\rm and} \quad 1- u^{\mathcal{N}}_{\sigma} \in L^1(\R^-).
\end{equation}
For that we notice that (\ref{intpi}) holds if
$F(u^{\mathcal{N}}_\sigma )\in L^1(\R)$. This is a consequence of the fact
$$
\lim_{\xi \to \infty} \frac{F\big(u^{\mathcal{N}}_\sigma(\xi)\big)}{\big(u^{\mathcal{N}}_{\sigma}(\xi)\big)^p } = \liminf_{u \to 0} \frac{F(u)}{\left(u\right)^p} =k \in]0,+\infty] \, ,
$$
$$
\lim_{\xi \to -\infty} \frac{F\big(u^{\mathcal{N}}_\sigma(\xi)\big)}{1- u^{\mathcal{N}}_{\sigma}(\xi) } = \lim_{u \to 1} \frac{F(u)}{1-u} =-F'(1) > 0  \, .
$$
Now we prove the integrability of  $F(u^{\mathcal{N}}_\sigma )$ over the whole real line.
For that we rewrite (\ref{start}) as
$$
F(u(\xi)) = \nu \left(c u^m(\xi) r(\xi)- \sigma u(\xi) \right)'.
$$
Integrating the previous relation and using the boundedness of $F$, and the finiteness of $\lim_{\pm \infty} u$ and $\lim_{\pm \infty} r$, we get
that $F(u^{\mathcal{N}}_\sigma) \in L^1(\R)$. Hence, (\ref{intpi}) holds.

Since $1- u^{\mathcal{N}}_{\sigma} \in L^1(\R^-)$ and $|1- u^{\mathcal{N}}_{\sigma}|\leq 1$, we also have that
\begin{equation}\label{intpi2}
1- u^{\mathcal{N}}_{\sigma}\in L^p(\R^-).
\end{equation}
This allows us to conclude the convergence of any sequence $u^{\mathcal{N}}_{\sigma_n}$ to $u^{\mathcal{N}}_{\sigma_0}$ in $L^p(\R)$
as $\sigma_n \to \sigma_0 \geq \sigma_{ent}$. Indeed, by Proposition \ref{propUN} and (\ref{intpi2}) the sequence
$\left| u^{\mathcal{N}}_{\sigma_n} - u^{\mathcal{N}}_{\sigma_0}\right|^p$ is dominated by a function in $L^p(\R)$
and converges pointwise to $0$. The result follows as a consequence of the Dominated Convergence Theorem.

 \begin{remark}
  Note that in the proof of the uniform convergence of $u_{\sigma_n}^{\mathcal{N}}$ we have not used any hypothesis on the asymptotic behavior of $F$ at $0$. Note also that under the hypothesis $K(0) >0$ the  $L^1(\R)$ convergence holds, since this hypothesis implies that \eqref{arap} is fulfilled for $p=1$.
\end{remark}

%%%%%%%%%%%%%%%%%%
\section{Appendix: Entropy solutions}\label{preliminaris}

Our purpose in this Appendix is to give the necessary background in order to introduce
the notion of entropy solutions to (\ref{modelo1}),
to state some existence and uniqueness results for them, and to
give sense to the properties stated in Section \ref{sect:analysisEC}.

Equation (\ref{modelo1}) belongs to the more general class of flux limited  diffusion equations, which has been extensively studied in
\cite{ACMEllipticFLDE,ACMMRelat,ACMSV,CEU2,leysalto}. As shown in those papers, the
notion of entropy solution is the right one in order to
prove existence and uniqueness results and to describe the qualitative features of solutions.
In particular, and closely related to this work, the so-called relativistic heat equation (which corresponds to $m=1$ in (\ref{modelo1}))
coupled with a Fisher--Kolmogorov type reaction term has been studied in
\cite{limitedFKPP,CGSS}. Existence and uniqueness results for that model were proved in \cite{limitedFKPP}, the construction of traveling waves
being the object of \cite{CGSS}.

Thus, our first purpose is to give a brief review of the concept of entropy solution for flux limited diffusion equations.
Although we are only concerned with the case $N=1$, we state the results in the more general context where $N\geq 1$ since this may be useful for future reference.
For a more detailed treatment we refer to  \cite{ACMMRelat,CEU2}.
We consider parabolic equations of the form
\begin{equation} \label{DirichletproblemP} \left\{
\begin{array}{ll}
\displaystyle \frac{\partial u}{\partial t} = \div\, \a(u,
\nabla u)+F(u),
 &
\hspace{0.3cm}\hbox{in \hspace{0.2cm} $Q_T=]0,T[\times \R^N$}\\
% \\
\displaystyle
 \\
\displaystyle u(0,x) = u_{0}(x), & \hspace{0.3cm} \hbox{in
\hspace{0.2cm} $x \in \R^N$}
\end{array}
\right.
\end{equation}
where $F(u)$ is a Lipschitz continuous function such that $F(0)=0$ and $\a(z, \zeta) = \nabla_{\zeta}f(z, \zeta)$  is
associated to a Lagrangian $f$ satisfying a set of technical assumptions. Let us give a brief account of them, referring to
\cite{ACMMRelat,CEU2} for a thorough presentation. Thus, we assume that

\begin{quote} \noindent (H) $f$ is continuous on $[0,\infty[ \times \R^N$ and is a
convex differentiable function of $\zeta$ such that
$\nabla_{\zeta}f(z, \zeta) \in C([0,\infty[ \times \R^N)$. Further, we require
$f$ to satisfy the coercivity and linear growth conditions
\begin{equation}\label{linearGr}
C_0(z) \vert \zeta \vert - D_0(z) \leq f(z, \zeta) \leq M_0(z)(\vert \zeta \vert + 1),
\end{equation}
for any $(z,\zeta)\in [0,\infty[\times \R^N$,
and some positive and continuous functions $C_0, D_0,$ $ M_0 \in C([0,\infty[)$ with $C_0(z) > 0$
for any $z\neq 0$. Notice  that
$\vert \zeta \vert$ denotes the Euclidian norm of $\zeta\in\R^N$. We assume that
\begin{equation*}
\nonumber
C_0(z) \geq c_0 z^{m}, \quad \hbox{\rm for some $c_0 > 0$, ${m} \geq 1$, $z\in [0,\infty[$.}
\end{equation*}
\end{quote}
Let $\a(z, \zeta) = \nabla_{\zeta}f(z, \zeta)$, $(z,\zeta)\in [0,\infty[\times \R^N$. We assume that
there is a vector field $\b(z,\zeta)$ and a constant $M > 0$ such that
\begin{equation}\label{acotacionbb}
\a(z,\zeta) = z^{m} \b(z,\zeta) \quad \hbox{\rm with}\quad  \vert \b (z, \zeta) \vert \leq M, \ \ \ \forall \ (z, \zeta) \in
[0,\infty[ \times \R^N.
\end{equation}

We consider the function $h : [0,\infty[ \times \R^N \rightarrow \R$
defined by
\begin{equation}\label{defh}
h(z, \zeta):= \a(z, \zeta) \cdot \zeta.
\end{equation}
From the convexity of $f$ in $\zeta$, (\ref{linearGr}) and (\ref{acotacionbb}), it follows
that
\begin{equation*}
\nonumber
C_0(z) \vert \zeta \vert - D_1(z) \leq h(z, \zeta) \leq M  z^{ m}\vert \zeta
\vert ,
\end{equation*}
for any $(z,\zeta)\in [0,\infty[\times \R^N$, where
$D_1(z) = D_0(z)+f(z,0)$.
We also assume also that the recession functions $f^0$, $h^0$ exist. Other technical assumptions on $f,h$ are required and we refer to
\cite{ACMMRelat,CEU2} for details. When we say that assumption (H) holds, we refer to the complete set of assumptions.

For the generalized relativistic heat
equation (\ref{modelo1}) the function
\begin{equation}\label{funct:frhe}
f(z,\zeta) = \frac{c^2}{\nu}  z^m \sqrt{z^2 +
\frac{\nu^2}{c^2} \vert \zeta\vert^2}
\end{equation}
satisfies all the assumptions that allow to
work in the context of entropy solutions (see
\cite{ACMEllipticFLDE,ACMMRelat}). In this case
\begin{equation*}\label{funct:arhe}\a(z,\zeta) = \nu
\frac{ z^m \zeta}{\sqrt{z^2 + \frac{\nu^2}{c^2} \vert
\zeta\vert^2}} \quad \hbox{\rm and } \quad h(z,\zeta) = \a(z,\zeta)\cdot \zeta = \nu \frac{z^m
\vert \zeta\vert^2}{\sqrt{z^2 + \frac{\nu^2}{c^2} \vert \zeta\vert^2}}.
\end{equation*}

Due to the linear growth condition on the Lagrangian, the natural
energy space to study the solutions of (\ref{DirichletproblemP}) is the
space of functions of bounded variation, or $BV$ functions. In Section \ref{sect:bv} we recall some basic basic facts about them.

The notion of entropy solutions is based on a set of Kruzkov's type inequalities and it requires to define a
functional calculus for functions whose truncations are in BV. We briefly review in Section \ref{sect:functionalcalculus}
this functional calculus which is based on the works
\cite{Dalmaso,DCFV}, which prove lower semicontinuity results for functionals on $BV$.
After this, in Section \ref{sect:defESpp} we state without proof an existence and uniqueness result for entropy solutions of (\ref{DirichletproblemP}).
The proof can be obtained by a suitable adaptation of the techniques in \cite{limitedFKPP}.
Since the traveling wave solutions we construct are functions in $L^\infty(\R^N)^+$, we give a uniqueness
result for solutions in that space (see Section \ref{SectSSS}).
A similar result was proved in \cite{limitedFKPP} for the case $m=1$.

This Section gives the necessary background for the characterization of entropy conditions given in Section \ref{sect:analysisEC}.

\subsection{Functions of bounded variation and some generalizations}\label{sect:bv}

Denote by ${\mathcal L}^N$ and
${\mathcal H}^{N-1}$ the $N$-dimensional Lebesgue measure and the
$(N-1)$-dimensional Hausdorff measure in $\R^N$, respectively.
Given an open set $\Omega$ in $\R^N$  we denote by ${\mathcal
D}(\Omega)$  the space of infinitely differentiable functions with
compact support in $\Omega$. The space of continuous functions
with compact support in $\R^N$ will be denoted by $C_c(\R^N)$.

Recall that if $\Omega$
is an open subset of $\R^N$, a function $u \in L^1(\Omega)$ whose
gradient $Du$ in the sense of distributions is a vector valued
Radon measure with finite total variation in $\Omega$ is called a
{\it function of bounded variation}. The class of such functions
will be denoted by $BV(\Omega)$.  For $u \in BV(\Omega)$, the
vector measure $Du$ decomposes into its absolutely continuous and
singular parts $Du = D^a u + D^s u$. Then $D^a u = \nabla u \
\L^N$, where $\nabla u$ is the Radon--Nikodym derivative of the
measure $Du$ with respect to the Lebesgue measure $\L^N$. We also
split $D^su$ in two parts: the {\it jump} part $D^j u$ and the
{\it Cantor} part $D^c u$. It is well known (see for instance
\cite{Ambrosio}) that $$D^j u = (u^+ - u^-) \nu_u \H^{N-1} \res
J_u,$$ where $u^+(x),u^-(x)$ denote the upper and lower approximate limits of $u$ at $x$,
$J_u$ denotes the set of approximate jump points of
$u$ (i.e. points $x\in \Omega$ for which $u^+(x)\neq u^-(x)$), and $\nu_u(x) = \frac{Du}{\vert D u \vert}(x)$,
 being $\frac{Du}{\vert D u \vert}$ the Radon--Nikodym derivative of
$Du$ with respect to its total variation $\vert D u \vert$. For
further information concerning functions of bounded variation we
refer to \cite{Ambrosio}.

We need to consider the following truncation functions. For $a <
b$, let $T_{a,b}(r) := \max(\min(b,r),a)$.
We denote
$$\mathcal T_r:= \{ T_{a,b} \ : \ 0 < a < b \}.  \ \ \
$$

Given any function $w$ and $a,b\in\R$ we shall use the notation
$\{w\geq a\} = \{x\in \R^N: w(x)\geq a\}$, $\{a \leq w\leq b\} =
\{x\in \R^N: a \leq w(x)\leq b\}$, and similarly for the sets $\{w
> a\}$, $\{w \leq a\}$, $\{w < a\}$, etc.

We need to consider the following function space
$$TBV_{\rm r}^+(\R^N):= \left\{ w \in L^1(\R^N)^+  \ :  \ \ T_{a,b}(w) - a \in BV(\R^N), \
\ \forall \ T_{a,b} \in \mathcal T_r \right\}.$$
Notice that $TBV_{\rm r}^+(\R^N)$ is
closely related to the space $GBV(\R^N)$  of generalized functions
of bounded variation introduced by E. Di Giorgi and L. Ambrosio (see \cite{Ambrosio})
Using the chain rule for
BV-functions (see for instance \cite{Ambrosio}), one can give a sense to
$\nabla u$ for a function $u \in
TBV^+(\R^N)$ as the unique function $v$ which satisfies
\begin{equation*}\label{E1WRN}
\nabla T_{a,b}(u) = v \1_{\{a < u  < b\}} \ \ \ \ \ {\mathcal
L}^N-{\rm a.e.}, \ \ \forall \ T_{a,b} \in \mathcal{T}_r.
\end{equation*}
We refer to Lemma 2.1 of \cite{Benilanetal}  or \cite{Ambrosio} for details.

\subsection{Functionals defined on BV}\label{sect:functionalcalculus}

In order to define the notion of entropy solutions of (\ref{DirichletproblemP})
and give a characterization of them,
we need a functional calculus defined on functions whose truncations are in $BV$.

Let $\Omega$ be an open subset of $\R^N$. Let $g: \Omega \times \R
\times \R^N \rightarrow [0, \infty[$ be a Borel function such that
\begin{equation*}\label{LGRWTH}
C(x) \vert \zeta \vert - D(x) \leq g(x, z, \zeta)  \leq M'(x) + M
\vert \zeta \vert
\end{equation*}
for any $(x, z, \zeta) \in \Omega \times \R \times \R^N$, $\vert
z\vert \leq R$, and any $R>0$, where $M$ is a positive constant and  $C,D,M' \geq
0$ are bounded Borel functions which may depend on $R$. Assume
that $C,D,M' \in L^1(\Omega)$.

Following Dal Maso \cite{Dalmaso} we consider the
functional:
\begin{eqnarray*}\label{RelEnerg}
{\mathcal R}_g(u)&:=& \displaystyle\int_{\Omega} g(x,u(x), \nabla u(x))
\, dx + \int_{\Omega} g^0 \left(x, \tilde{u}(x),\frac{Du}{\vert D
u \vert}(x) \right) \,  \vert D^c u \vert
\nonumber \\
&&+ \displaystyle\int_{J_u} \left(\int_{u_-(x)}^{u_+(x)}
g^0(x, s, \nu_u(x)) \, ds \right)\, d \H^{N-1}(x),
\end{eqnarray*}
for $u \in BV(\Omega) \cap L^\infty(\Omega)$, being $\tilde{u}$ is the approximated limit of $u$ \cite{Ambrosio}. The recession function $g^0$ of $g$ is defined by
\begin{equation*}\label{Asimptfunct}
 g^0(x, z, \zeta) = \lim_{t \to 0^+} tg \left(x, z, \frac{\zeta}{t}
 \right).
\end{equation*}
It is convex and homogeneous of degree $1$ in $\zeta$.

In case that $\Omega$ is a bounded set, and under standard
continuity and coercivity assumptions,  Dal Maso proved in
\cite{Dalmaso} that ${\mathcal R}_g(u)$ is $L^1$-lower semi-continuous
for $u \in BV(\Omega)$. More recently, De Cicco, Fusco, and Verde
\cite{DCFV} have obtained a very general result about the
$L^1$-lower semi-continuity of ${\mathcal R}_g$ in $BV(\R^N)$.

Assume that $g:\R\times \R^N \to [0, \infty[$ is a Borel function
such that
\begin{equation}\label{LGRWTHnox}
C \vert \zeta \vert - D \leq g(z, \zeta)  \leq M(1+ \vert \zeta \vert)
\qquad \forall (z,\zeta)\in \R^N, \, \vert z \vert \leq R,
\end{equation}
for any $R > 0$ and for some constants  $C,D,M \geq 0$ which may depend on $R$.
Observe that both functions
$f,h$ defined in (\ref{funct:frhe}), (\ref{defh}) satisfy (\ref{LGRWTHnox}).

Assume that
$$\1_{\{u\leq a\}} \left(g(u(x), 0) - g(a, 0)\right), \1_{\{u \geq b\}} \left(g(u(x),
0) - g(b, 0) \right) \in L^1(\R^N),$$
for any $u\in L^1(\R^N)^+$.
Let $u \in TBV_{\rm r}^+(\R^N)  \cap L^\infty(\R^N)$  and $T = T_{a,b}\in {\mathcal T}_r $.
For each $\phi\in
C_c(\R^N)$, $\phi \geq 0$, we define the Radon measure  $g(u, DT(u))$ by
\begin{eqnarray}\label{FUTab}
\langle g(u, DT(u)), \phi \rangle &: =& {\mathcal R}_{\phi g}(T_{a,b}(u))+
\displaystyle\int_{\{u \leq a\}} \phi(x)
\left( g(u(x), 0) - g(a, 0)\right) \, dx  \nonumber\\
&& \displaystyle  + \int_{\{u \geq b\}} \phi(x)
\left(g(u(x), 0) - g(b, 0) \right) \, dx.
\end{eqnarray}
If $\phi\in C_c(\R^N)$, we write $\phi = \phi^+ -
\phi^-$ with $\phi^+= \max(\phi,0)$, $\phi^- = - \min(\phi,0)$,
and we define $\langle g(u, DT(u)), \phi \rangle : =
\langle g(u, DT(u)), \phi^+ \rangle- \langle g(u, DT(u)), \phi^- \rangle$.

Recall that, if $g(z,\zeta)$ is continuous in $(z,\zeta)$, convex
in $\zeta$ for any $z\in \R$, and $\phi \in C^1(\R^N)^+$ has compact
support, then  $\langle g(u, DT(u)), \phi \rangle$ is lower
semi-continuous in $TBV^+(\R^N)$ with respect to
$L^1(\R^N)$-convergence \cite{DCFV}. This property is used to prove existence of
solutions of (\ref{DirichletproblemP}).

We can now define the required functional calculus. We follow \cite{CEU2} and note that it
represents an extension of the functional calculus in
\cite{ACMEllipticFLDE,ACMMRelat} that uses a more restrictive class of test functions.

Let us denote by ${\mathcal P}$ the set of Lipschitz continuous functions $p : [0, +\infty[ \rightarrow \R$
satisfying $p^{\prime}(s) = 0$ for $s$ large enough. We write
${\mathcal P}^+:= \{ p \in {\mathcal P} \ : \ p \geq 0 \}$.

Let $S \in C([0,\infty[)$ and $p \in {\mathcal P} \cap C^1([0,\infty[)$.
We denote
$$
f_{S:p}(z,\zeta) = S(z)p'(z) f(z,\zeta), \qquad h_{S:p}(z,\zeta) = S(z)p'(z)h(z,\zeta).
$$
If $Sp'\geq 0$, then the function $f_{S:p}(z,\zeta)$ satisfies the assumptions implying the lower
semicontinuity of the associated energy functional \cite{DCFV}.

Assume that $p(r)=p(T_{a,b}(r))$, $0 < a < b$. We assume that
$u \in TBV_{\rm r}^+(\R^N)$ and
$$
\1_{\{u\leq a\}} S(u)\left(f(u(x), 0) - f(a, 0)\right), \1_{\{[u \geq b\}} S(u)\left(f(u(x),
0) - f(b,0) \right) \in L^1(\R^N).             
$$
Since $h(z, 0) = 0$, the last assumption clearly holds for $h$.

Finally, we define
$f_{S:p}(u,DT_{a,b}(u)),$  $h_{S:p}(u,DT_{a,b}(u))$ as the Radon measures given by
(\ref{FUTab}) with $g(z,\zeta) = f_{S:p}(z,\zeta)$ and $g(z,\zeta) = h_{S:p}(z,\zeta)$, respectively.

\subsection{Existence and uniqueness of entropy solutions}\label{sect:defESpp}

\subsubsection{The class of  test functions}

Let us introduce the class of test functions required to define entropy sub- and super-solutions.
If $u\in TBV_{\rm r}^+(\R^N)$,
we define $\mathcal{TSUB}$ (resp. $\mathcal{TSUPER}$ ) as the class of functions
$S,T \in \mathcal{P}$ such that
$$S \geq 0 , S'\geq 0 \quad \hbox{\rm and}\quad T\geq 0,T'\geq 0,$$
$$(\hbox{\rm resp.}\,  S \leq 0, S'\geq 0 \quad \hbox{\rm and}\quad T\geq 0,T'\leq 0)$$
and $p(r) = \tilde{p}(T_{a,b}(r))$ for some $0 < a < b$, where
$\tilde{p}$ is differentiable in a neighborhood of $[a,b]$ and $p$ represents either $S$ or $T$.

Although the proof of uniqueness and the development of the theory requires only the use of test functions $S,T\in\mathcal{T}^+$
and this was the family used in \cite{ACMMRelat},
the analysis of the entropy conditions is facilitated by the use of more general test functions
in $\mathcal{TSUB}$ and $\mathcal{TSUPER}$.

\subsubsection{Entropy solutions in $L^1\cap L^\infty$.}
\label{sect:B}
Let $L^1_{w}(0,T,BV(\R^N))$  be the space of weakly$^*$
measurable functions $w:[0,T] \to BV(\R^N)$ (i.e., $t \in [0,T]
\to \langle w(t),\phi \rangle$ is measurable for every $\phi$ in the predual
of $BV(\R^N)$) such that $\int_0^T \Vert w(t)\Vert_{BV} \, dt< \infty$.
Observe that, since $BV(\R^N)$ has a separable predual (see
\cite{Ambrosio}), it follows easily that the map $t \in [0,T]\to
\Vert w(t) \Vert_{BV}$ is measurable. By  $L^1_{loc, w}(0, T,
BV(\R^N))$ we denote the space of weakly$^*$ measurable functions
$w:[0,T] \to BV(\R^N)$ such that the map $t \in [0,T]\to \Vert
w(t) \Vert_{BV}$ is in $L^1_{loc}(]0, T[)$.

\begin{definition} \label{def:espb}
Assume that $u_0 \in (L^1(\R^N)\cap L^\infty(\R^N))^+$.
A measurable function $u: ]0,T[\times \R^N \rightarrow \R$ is an
{\it entropy sub-solution} (resp. {\it super-solution}) of (\ref{DirichletproblemP}) in $Q_T =
]0,T[\times \R^N$ if $u \in C([0, T]; L^1(\R^N))$,
$T_{a,b}(u(\cdot)) - a \in L^1_{loc, w}(0, T, BV(\R^N))$ for all $0 <
a < b$, and

\begin{itemize}
\item[(i)]  $u(0) \leq u_0$ (resp.  $u(0) \geq u_0$), and
\item[(ii)] \ the following inequality is satisfied
\begin{eqnarray}\label{pei}
&& \hspace{-0.6cm}\displaystyle \int_0^T\int_{\R^N} \phi
h_{S:T}(u,DT_{a,b}(u)) \, dt + \int_0^T\int_{\R^N} \phi h_{T:S}(u,DS_{c,d}(u)) \, dt \nonumber
 \\ &&\hspace{-0.2cm}\leq  \displaystyle\int_0^T\int_{\R^N} \Big\{ J_{TS}(u(t)) \phi^{\prime}(t) - \a(u(t), \nabla u(t)) \cdot \nabla \phi \
T(u(t)) S(u(t))\Big\} dxdt  \nonumber
 \\
&& \ +\displaystyle\int_0^T\int_{\R^N}\phi(t)T(u(t))S(u(t))F(u(t))
\, dxdt,
\end{eqnarray}
 for truncation functions $(S , \, T) \in \mathcal{TSUB}$ (resp. $(S , \, T) \in \mathcal{TSUPER}$)
 with $T=\tilde T\circ T_{a,b}$, $S=\tilde S\circ S_{c,d}$, $0 < a < b$, $0 < c < d$,  and any  smooth function $\phi$ of
 compact support, in particular  those  of the form $\phi(t,x) =
 \phi_1(t)\rho(x)$, $\phi_1\in {\mathcal D}(]0,T[)$, $\rho \in
 {\mathcal D}(\R^N)$.
\end{itemize}
We say that $u: ]0,T[\times \R^N \rightarrow \R$ is an
{\it entropy  solution}   of (\ref{DirichletproblemP}) if it is an entropy sub- and super-solution.
\end{definition}

Notice that if $u$ is an entropy sub-solution (resp. super-solution), then
$u_t \leq  {\rm div} \, \a(u(t), \nabla u(t)) + F(u(t))$ (resp. $\geq$) in $\mathcal{D}^\prime(Q_T)$.
We notice also that $u$ is an entropy solution if
$u_t =  {\rm div} \, \a(u(t), \nabla u(t)) + F(u(t))$  in $\mathcal{D}^\prime(Q_T)$, $u(0)=u_0$
and the inequalities  (\ref{pei}) hold for truncations $(S , \, T) \in \mathcal{TSUB}$ and any test functions
as in $(ii)$ \cite{CEU2}.

We have the following existence and uniqueness result, which is an extension of those in  \cite{limitedFKPP}.

\begin{theorem}\label{EUTEparabolic}
Let the set of assumptions
(H) be satisfied and let $F$ be Lipschitz continuous with $F(0) = 0$.
Then, for any initial datum $0 \leq u_0 \in L^{\infty}(\R^N) \cap
L^{1}(\R^N)$ there exists a unique entropy solution $u$ of
(\ref{DirichletproblemP}) in $Q_T$ for every $T
> 0$ such that $u(0) = u_0$, satisfying $u
\in C([0, T]; L^1(\R^N))$  and $F(u(t)) \in L^1(\R^N)$ for almost all
$0 \leq t \leq T$.  Moreover, if $u(t)$, $\overline{u}(t)$  are
entropy solutions corresponding to initial data $u_0$,
$\overline{u}_0 \in \left(L^{\infty}(\R^N) \cap
L^{1}(\R^N)\right)^+$, respectively, then
\begin{equation*}
\label{CPentropys}\Vert u(t) - \overline{u}(t) \Vert_1 \leq
e^{t\Vert F \Vert_{Lip}} \, \Vert u_0 - \overline{u}_0 \Vert_1 \ \
\ \ \ \ {\rm for \ all} \ \ t \geq 0.
\end{equation*}
\end{theorem}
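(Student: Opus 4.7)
\bigskip

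\noindent\textbf{Proof plan for Theorem \ref{EUTEparabolic}.}

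The plan is to decouple existence from uniqueness/contraction, since the $L^1$-contraction with the factor $e^{t\|F\|_{\mathrm{Lip}}}$ implies uniqueness, and existence will be obtained by a time-discretized scheme once the underlying elliptic problem is solved. The starting point is the earlier theory developed in \cite{ACMEllipticFLDE,ACMMRelat,CEU2} for the case $F=0$, together with the adaptation to reaction terms carried out in \cite{limitedFKPP} for $m=1$; the aim is to show that the only modifications required come from the Lipschitz lower-order term $F$, which can be absorbed by standard perturbation arguments.

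\emph{Step 1: $L^1$-contraction.} I would take $u$ an entropy sub-solution with datum $u_0$ and $\bar u$ an entropy super-solution with datum $\bar u_0$ and apply Kruzhkov's doubling of variables. Concretely, substitute the pair $(S,T)\in \mathcal{TSUB}$ (resp.\ $\mathcal{TSUPER}$) in \eqref{pei} for $u(t,x)$ and $\bar u(s,y)$ separately, integrate against a non-negative test function of the form $\phi(t,x,s,y)=\psi(t,x)\rho_\varepsilon(t-s)\rho_\varepsilon(x-y)$ where $\rho_\varepsilon$ is a standard mollifier, and choose the truncation parameters $(a,b)$, $(c,d)$ and the shapes of $S,T$ so that as they tend to sign-approximations we recover $(u-\bar u)^+$. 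The terms involving $h_{S:T}$ and $h_{T:S}$ are non-negative and can be dropped in the appropriate inequality direction, while the flux pairing $\a(u,\nabla u)\cdot\nabla\phi\,T(u)S(u)$ is handled exactly as in \cite{ACMMRelat,CEU2,limitedFKPP} by exploiting the convexity of $f$ in $\zeta$ and the weak-trace machinery of Anzellotti; the Rankine--Hugoniot identity from Proposition \ref{prop:RH1} and the vertical-contact characterization of Proposition \ref{thm:interpEC2} ensure that the jump contributions from $u$ and $\bar u$ have the correct sign. After letting $\varepsilon\to 0$ the nonlinear principal part produces the standard Kato-type inequality
\begin{equation*}
\frac{d}{dt}\int_{\R^N}(u(t)-\bar u(t))^+\,dx \;\le\; \int_{\R^N}\bigl(F(u(t))-F(\bar u(t))\bigr)\,\mathrm{sgn}_+(u(t)-\bar u(t))\,dx,
\end{equation*}
and the Lipschitz bound on $F$ gives $\frac{d}{dt}\|(u-\bar u)^+\|_1\le \|F\|_{\mathrm{Lip}}\|(u-\bar u)^+\|_1$. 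Gronwall's lemma then yields the contraction estimate and, in particular, uniqueness.

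\emph{Step 2: Existence.} I would reduce the Cauchy problem to a resolvent equation and then invoke the Crandall--Liggett generation theorem in $L^1(\R^N)$. Define the (multivalued) operator $\mathcal{A}u:=-\mathrm{div}\,\a(u,\nabla u)$ on the domain of $u\in (L^1\cap L^\infty)^+$ for which this makes sense as an entropy-type elliptic object; the results of \cite{ACMEllipticFLDE} (suitably extended in \cite{ACMMRelat,CEU2}) show that $\mathcal{A}$ is $m$-accretive in $L^1$ with dense domain and that the resolvent $(I+\lambda\mathcal{A})^{-1}$ preserves the cone and the $L^\infty$ bound. The FKPP-like term $F$ is a globally Lipschitz perturbation vanishing at $0$, hence $\mathcal{A}-F$ remains quasi-$m$-accretive with constant $\|F\|_{\mathrm{Lip}}$ (this is exactly the argument used in \cite{limitedFKPP} for $m=1$; the value of $m\ge 1$ only enters through hypothesis (H) on $\a$, which has already been checked for the flux in \eqref{funct:frhe}). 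Crandall--Liggett then produces a unique mild solution $u\in C([0,T];L^1(\R^N))$, preserving non-negativity and the $L^\infty$ bound by the maximum principle at the resolvent level.

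\emph{Step 3: Mild $=$ entropy, and regularity.} The remaining, more technical part is to verify that the mild semigroup solution satisfies the entropy formulation of Definition \ref{def:espb}. I would do this by showing that the implicit Euler iterates $u_n$ obtained from $u_n+h\mathcal{A}u_n-hF(u_n)=u_{n-1}$ satisfy the discrete analog of \eqref{pei} (this uses the same functional calculus of Section \ref{sect:functionalcalculus} together with the lower-semicontinuity of $\mathcal{R}_{h_{S:T}}$ proved via \cite{DCFV}), and then pass to the limit $h\to 0$, using the $L^1$ convergence and the lower semicontinuity of the jump/Cantor contributions. The fact that $T_{a,b}(u(t))-a\in L^1_{\mathrm{loc},w}(0,T;BV(\R^N))$ follows from the $BV$ estimate on each $u_n$, obtained by differentiating $F$ and testing with $\mathrm{sgn}(\nabla u)$; the integrability $F(u(t))\in L^1(\R^N)$ for a.e.\ $t$ is a consequence of $u(t)\in L^1\cap L^\infty$ together with $F(0)=0$ and Lipschitz continuity.

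\emph{Main obstacle.} The delicate point is Step 3, matching the semigroup-theoretic solution with the entropy inequalities in the form \eqref{pei}: the pairs $(S,T)\in\mathcal{TSUB}\cup\mathcal{TSUPER}$ are richer than the set used in \cite{ACMMRelat}, so one must check stability under the discrete-to-continuous passage of the two Radon measures $h_{S:T}(u,DT_{a,b}(u))$ and $h_{T:S}(u,DS_{c,d}(u))$ separately. This is exactly the refinement carried out in \cite{CEU2}, and the only genuinely new input here is to track the Lipschitz contribution of $F$ through the whole chain of estimates, which is routine.
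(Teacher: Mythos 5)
The paper does not actually prove Theorem~\ref{EUTEparabolic}: it is stated as ``an extension of those in \cite{limitedFKPP}'' and the appendix says explicitly that the proof ``can be obtained by a suitable adaptation of the techniques in \cite{limitedFKPP}'', so your plan---Crandall--Liggett generation for the quasi-$m$-accretive operator $\mathcal{A}-F$, Kruzhkov doubling of variables for the $L^1$-contraction with constant $e^{t\Vert F\Vert_{Lip}}$, and the discrete-to-continuous verification that the mild solution satisfies \eqref{pei} via the Dal Maso / De Cicco--Fusco--Verde lower-semicontinuity---is precisely the strategy of \cite{limitedFKPP,ACMMRelat,CEU2} and hence matches the intended argument. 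One caveat: in your Step~1 you should not invoke Propositions~\ref{prop:RH1} and~\ref{thm:interpEC2}, since the geometric jump characterization is derived \emph{a posteriori} from the entropy inequalities and using it in the uniqueness proof would be circular; the doubling argument must be run directly off the inequalities \eqref{pei} (similarly, the $BV$ bounds on the resolvent iterates come from translation invariance of the $L^1$-contraction and the coercivity in (H), not from ``differentiating $F$ and testing with $\mathrm{sgn}(\nabla u)$'').
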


\subsubsection{Entropy solutions in $L^\infty$}\label{SectSSS}

In order to cover the case of bounded traveling waves, we extend the  notion of entropy solutions to
functions in $L^{\infty}(\R^N)^+$.
We follow the presentation in \cite{limitedFKPP}.

\begin{definition}\label{SES} Given  $0 \leq u_0 \in L^{\infty}(\R^N)$, we
say that a measurable function $u: ]0,T[\times \R^N \rightarrow
\R$  is an {\it entropy sub-solution} (respectively, {\it entropy
super-solution}) of the Cauchy problem (\ref{DirichletproblemP}) in
$Q_T = ]0,T[\times \R^N$ if  $u
\in C([0, T]; L_{loc}^1(\R^N))$, $u(0)\leq u_0$ (resp. $u(0) \geq
u_0$), $F(u(t)) \in
L_{loc}^1(\R^N)$  for almost {every} $0 \leq t \leq T$,
$T_{a,b}(u(\cdot)) - a \in L^1_{loc, w}(0, T, BV_{\rm loc}(\R^N))$
for all $0 < a < b$, $\a(u( \cdot), \nabla u(\cdot)) \in
L^{\infty}(Q_T)$, and the  inequalities (\ref{pei}) are satisfied
 for truncations $(S , \, T) \in \mathcal{TSUB}$ (resp. $(S , \, T) \in \mathcal{TSUPER}$)
 with $T=\tilde T\circ T_{a,b}$, $S=\tilde S\circ S_{c,d}$, $0 < a < b$, $0 < c < d$, and any  smooth function $\phi$ of
 compact support, in particular  those  of the form $\phi(t,x) =
 \phi_1(t)\rho(x)$, $\phi_1\in {\mathcal D}(]0,T[)$, $\rho \in
 {\mathcal D}(\R^N)$.

We say that $u: ]0,T[\times \R^N \rightarrow
\R$  is an solution of (\ref{DirichletproblemP}) if $u$ is an entropy sub-solution and
super-solution.
\end{definition}

\begin{definition}\label{def:nullflux}
Let $u$ be a sub- or a  super-solution  of
(\ref{DirichletproblemP}) in $Q_T$. We say that $u$ has a null
flux at infinity if
$$
\lim_{R \to + \infty} \int_0^T  \int_{ \R^N} \vert \a(u(t),\nabla
u(t)) \vert \, \vert \nabla \psi_R (x) \vert \, dx dt = 0
$$
for all  $\psi_R \in {\mathcal D}(\R^N)$ such that $0 \leq \psi_R
\leq 1$, $\psi_R \equiv 1$ on $B_R$, $\hbox{supp}(\psi_R) \subset
B_{R+2}$ and $\Vert \nabla \psi_R \Vert_{\infty} \leq 1$.
\end{definition}

We have uniqueness of entropy solutions for initial data in $L^{\infty}(\R^N)$ when
they have null flux at infinity.

\begin{theorem}\label{UniqSup} Let the set of assumptions
(H) be satisfied and let $F$ be Lipschitz continuous with $F(0) = 0$.
\begin{itemize}
\item[(i)] Let  $u(t)$,
$\overline{u}(t)$ be two entropy solutions of (\ref{DirichletproblemP})
with initial data $u_0, \overline{u}_0  \in L^{\infty}(\R^N)^+$, respectively.
Assume that  $u(t)$ and $\overline{u}(t)$ have null flux at infinity.
Then
\begin{equation*}
\label{CPeUNIQ} \Vert u(t) - \overline{u}(t) \Vert_1 \leq e^{t\Vert F \Vert_{Lip}} \,
\Vert u_0 - \overline{u}_0 \Vert_1, \ \ \ \ \ {\rm for \ all} \ \ t \geq 0.
\end{equation*}
\item[(ii)]
Assume that $u_0\in (L^1(\R^N)\cap L^\infty(\R^N))^+$, $\overline{u}_0  \in L^{\infty}(\R^N)^+$.
Let $u(t)$ be the entropy solution of (\ref{DirichletproblemP})
with initial datum $u_0$. Let $\overline{u}(t)$ be an entropy super-solution of (\ref{DirichletproblemP})
with initial datum $\overline{u}_0  \in L^{\infty}(\R^N)^+$ having a null flux at infinity.
Assume in addition that $\overline u(t) \in BV_{\rm loc}(\R^N)$ for almost every $0 < t < T$. Then
\begin{equation*}
\label{CPeUNIQcp} \Vert (u(t) - \overline{u}(t))^+ \Vert_1 \leq e^{t\Vert F \Vert_{Lip}} \,
\Vert (u_0 - \overline{u}_0)^+ \Vert_1, \ \ \ \ \ {\rm for \ all} \ \ t \geq 0.
\end{equation*}
\end{itemize}
\end{theorem}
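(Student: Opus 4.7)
The plan is to adapt the Kruzhkov doubling of variables method to the flux-limited parabolic setting, following the strategy developed in \cite{limitedFKPP} for the linear diffusion case $m=1$. The new difficulty here compared to Theorem \ref{EUTEparabolic} is that solutions live only in $L^\infty(\R^N)$, so globally integrable test functions are not available; the null flux at infinity hypothesis is the tool that replaces the decay at infinity used in the $L^1\cap L^\infty$ case.

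For part (i), I would introduce two copies $u(t,x)$ and $\bar u(s,y)$ and use as test function
$$\phi_\varepsilon(t,s,x,y)=\rho_\varepsilon(t-s)\,\rho_\varepsilon(x-y)\,\psi_R(x)\,\eta(t),$$
with $\rho_\varepsilon$ a standard mollifier, $\psi_R$ the spatial cutoff in Definition \ref{def:nullflux}, and $\eta\in\mathcal{D}(]0,T[)$. One applies the sub-solution inequality for $u$ and the super-solution inequality for $\bar u$ (using truncation pairs $(S,T)\in\mathcal{TSUB}$ approximating the Kruzhkov entropy $|u-\bar u|$ via the functional calculus of Section \ref{sect:functionalcalculus}), adds them, and lets $\varepsilon\to 0$. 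The diffusive cross terms produce contributions of the form $\mathbf{a}(u,\nabla u)\cdot \nabla u-\mathbf{a}(u,\nabla u)\cdot \nabla \bar u$, which after the passage to the limit are shown to be nonnegative thanks to the convexity of $f$ in $\zeta$ and the monotonicity properties encoded in the measures $h_{S:T}(u,DT(u))$.

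The key obstacle will be passing $R\to\infty$: the term involving $\nabla\psi_R$ produces a contribution bounded by
$$\int_0^T\!\!\int_{\R^N}\bigl(|\mathbf{a}(u,\nabla u)|+|\mathbf{a}(\bar u,\nabla \bar u)|\bigr)\,|\nabla\psi_R|\,dx\,dt,$$
which vanishes in the limit precisely because both solutions have null flux at infinity. What remains is
$$\frac{d}{dt}\|u(t)-\bar u(t)\|_{L^1(\R^N)}\le \|F\|_{\rm Lip}\,\|u(t)-\bar u(t)\|_{L^1(\R^N)}$$
in the sense of distributions on $]0,T[$; note that $u-\bar u\in L^1$ is obtained a posteriori from the fact that the cutoff estimates are uniform in $R$ combined with the assumption $u_0-\bar u_0\in L^1$ via monotone/dominated convergence. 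Gronwall's lemma then closes part (i).

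For part (ii) the scheme is parallel, but with three modifications: one uses the entropy super-solution inequality for $\bar u$ and the entropy inequality for $u$ separately; the Kruzhkov entropy is replaced by a smooth approximation of the positive part $(u-\bar u)^+$, constructed again from admissible truncation pairs; and the extra hypothesis $\bar u(t)\in BV_{\rm loc}(\R^N)$ guarantees that the weak normal traces of the flux $\mathbf{a}(\bar u,\nabla \bar u)$ on the jump set of $\bar u$ are well defined in the Anzellotti--Chen--Frid sense invoked in Proposition \ref{thm:interpEC2}, which is needed in order to identify the measure-valued terms produced by the doubling procedure. The rest of the argument (doubling, limit $\varepsilon\to 0$, spatial cutoff limit $R\to\infty$ via null flux at infinity, and Gronwall) proceeds exactly as in part (i), now producing the one-sided estimate on $\|(u(t)-\bar u(t))^+\|_{L^1(\R^N)}$.
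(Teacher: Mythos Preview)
The paper does not actually supply a proof of Theorem~\ref{UniqSup}: it is stated in the Appendix as a background uniqueness/comparison result, with the comment that ``a similar result was proved in \cite{limitedFKPP} for the case $m=1$'' and, for the companion Theorem~\ref{EUTEparabolic}, that ``the proof can be obtained by a suitable adaptation of the techniques in \cite{limitedFKPP}.'' So there is no in-paper argument against which to compare your proposal step by step.

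That said, your outline is the right one and matches the strategy of \cite{limitedFKPP,ACMMRelat,CEU2}: Kruzhkov doubling with truncation pairs from $\mathcal{TSUB}/\mathcal{TSUPER}$, convexity of $f(z,\cdot)$ to control the cross terms, the null-flux hypothesis to kill the $\nabla\psi_R$ boundary contribution, and Gronwall to close. Your identification of why the $BV_{\rm loc}$ assumption on $\bar u$ enters in (ii) --- to make the Anzellotti/Chen--Frid trace machinery applicable --- is also correct.

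Two places where your sketch is a bit loose and would need tightening in an actual write-up. First, the ``diffusive cross terms'' are not simply $\a(u,\nabla u)\cdot(\nabla u-\nabla\bar u)$; in this flux-limited setting the flux depends nonlinearly on \emph{both} $z$ and $\zeta$, and the correct nonnegativity comes from the combined structure encoded in the measures $h_{S:T}(u,DT(u))$ and $f_{S:T}(u,DT(u))$ (this is where the functional calculus of Section~\ref{sect:functionalcalculus} and the lower-semicontinuity results of \cite{Dalmaso,DCFV} are genuinely used, not just the convexity of $f$ in $\zeta$). Second, the a posteriori $L^1$ membership of $u-\bar u$ in part~(i) deserves a careful statement: one first obtains the localized inequality with $\psi_R$ present, observes that the right-hand side is controlled by $e^{t\Vert F\Vert_{\rm Lip}}\Vert(u_0-\bar u_0)\psi_R\Vert_1$ plus a flux term that vanishes as $R\to\infty$, and only then passes to the limit by monotone convergence. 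These are refinements rather than gaps; the skeleton you propose is the standard and correct route.
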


%%%%%%%%%%%%%%%%
\bibliographystyle{amsplain}

\begin{thebibliography}{99}

\bibitem{Ambrosio}
{ L. Ambrosio, N. Fusco, D. Pallara}, \textit{ Functions of
Bounded Variation and Free Discontinuity Problems},
 Oxford Mathematical Monographs, 2000.


\bibitem{ACMEllipticFLDE}
F.~Andreu, V.~Caselles, J.M. Maz\'on.
\textit{A Strongly Degenerate Quasilinear  Elliptic Equation,
Nonlinear Analysis, TMA}  {\bf 61} (2005), 637--669.


\bibitem{limitedFKPP}
F. Andreu, V. Caselles, J.M. Maz\'on, \textit{A Fisher--Kolmogorov--Petrovskii--Piskunov equation with finite speed of propagation, Journal of Differential Equations} {\bf 248} (2010), 2528--2561.

\bibitem{ACMMRelat}
{F.~Andreu, V.~Caselles, J.M. Maz\'on},  \textit{ The Cauchy
Problem for a Strongly Degenerate Quasilinear Equation}. J. Europ.
Math. Soc. {\bf 7} (2005), 361--393.

\bibitem{ACMSV} F. Andreu, V. Caselles, J.M. Maz\'on, J. Soler, M. Verbeni, \textit{Radially symmetric solutions of a tempered diffusion equation. A porous media flux-limited case},  SIAM J. Math. Anal. {\bf 44} (2012), 1019--1049.

\bibitem{AW2} D.G. Aronson, H.F. Weinberger, \textit{Multidimensional nonlinear diffusions arising in population genetics},
Adv. Math. {\bf 30} (1978), 33--76.

\bibitem{aronson1980density} D.G. Aronson,
\textit{Density dependent interaction diffusion systems},(in) Proceedings of the Advanced Seminar on Dynamics and Modeling of Reactive Systems, Academic Press,
New York, 1980.

\bibitem{Anzellotti1}
G. Anzellotti,
 \textit{Pairings Between Measures and Bounded Functions
and Compensated Compactness},
Ann. di Matematica Pura et Appl. IV {\bf  135}
(1983), 293--318.

\bibitem{Benilanetal}
Ph. B\'enilan, L. Boccardo, T. Gallouet, R. Gariepy, M. Pierre,
J.L V\'azquez.
\textit{An $L^1$-Theory of Existence and Uniqueness of Solutions
of Nonlinear Elliptic Equations},
Ann. Scuola Normale Superiore di Pisa, IV, Vol. XXII (1995),
241--273.

\bibitem{BH1} H. Berestycki, F. Hamel, \textit{Front propagation in periodic excitable media}, Comm. Pure Appl. Math.
{\bf 55} (2002),  949--1032.

\bibitem{BHM} H. Berestycki, F Hammel, H. Matano, \textit{Bistable travelling waves around an obstacle}, Comm. Pure Appl. Math. {\bf 62}, 729--788 (2009).

\bibitem{BHN} H. Berestycki, F. Hamel, N. Nadirashvili, \textit{The speed of propagation for KPP type problems. I - Periodic framework}, J. European Math. Soc.  {\bf 7} (2005),  173--213.

\bibitem{BHN1} H. Berestycki, F. Hamel, N. Nadirashvili, \textit{The speed of propagation for KPP type problems. II: General domains}, J. Amer. Math. Soc. {\bf 23} (2010), 1--34


\bibitem{BN}  H. Berestycki, L. Nirenberg, \textit{Travelling fronts in cylinders}, Ann. Inst. H. Poincar\'e, Anal. Non Lin. {\bf 9} (1992), 497--572.

\bibitem{Brenier1}
Y. Brenier, \textit{Extended Monge-Kantorovich Theory},
In: ``Optimal Transportation and Applications'', Lectures given at the C.I.M.E. Summer School help
in Martina Franca,  L.A. Caffarelli and S. Salsa (eds.), Lecture
Notes in Math. 1813,  Springer--Verlag, 2003, 91--122.

\bibitem{CR} X. Cabr\'e, J.-M. Roquejoffre, \textit{Front propagation in Fisher-KPP equations with fractional diffusion}, C. R. Math. Acad. Sci. Paris {\bf 347} (2009), 1361--1366.

\bibitem{Caff1} L. Caffarelli, F. Soria, J.L. V\'azquez,  \textit{Regularity of solutions of the
fractional porous medium}, arXiv:1201.6048v1 [math.AP] 29 Jan 2012.

\bibitem{Co} A. Constantin, J. Escher, \textit{Analyticity of periodic traveling free surface water waves with vorticity}, Annals of Mathematics {\bf 173} (2011), 559--568

\bibitem{CGSS}
 J. Campos, P. Guerrero, O. S\'anchez, J. Soler, \textit{On the analysis of traveling waves to a nonlinear flux limited reaction--diffusion equation}, Ann. Inst. H. Poincar\'e Anal. Non Lin\'eaire, doi number  http://dx.doi.org/10.1016/j.anihpc.2012.07.001

 \bibitem{CMSV} J. Calvo, J. Maz\'on, J. Soler, M. Verbeni, \textit{Qualitative properties of the solutions of a nonlinear flux-limited equation arising in the transport of morphogens}, Math. Mod. and Meth. in Appl. Sci. {\bf 21} (2011), 893--937.

\bibitem{CEU2}
V. Caselles, \textit{An existence and uniqueness result for flux limited diffusion equations},
Discrete and Continuous Dynamical Systems {\bf 31} (2011), 1151--1195.


 \bibitem{leysalto}
 V. Caselles, \textit{On the entropy conditions for some flux limited diffusion equations}, J. Diff. Eqs. {\bf 250} (2011), 3311--3348.


 \bibitem{PMnewE}
 V. Caselles, \textit{Flux limited generalized porous media
diffusion equations}, Publicacions Matem\`atiques {\bf 57} (2013), 155-217.

 \bibitem{PMnew}
 V. Caselles, \textit{Convergence of flux limited porous media
diffusion equations to its classical counterpart}, Preprint, 2012.


 \bibitem{ChenFrid1}
G.Q. Chen,  H. Frid,
 \textit{Divergence-Measure Fields and Hyperbolic Conservation Laws},
 Arch. Rational Mech. Anal. {\bf 147} (1999), 89--118.

 \bibitem{CKR}
{ A. Chertock, A. Kurganov, P. Rosenau},
 {\it Formation of discontinuities in flux-saturated degenerate parabolic equations},
 Nonlinearity  {\bf 16} (2003), 1875--1898.

\bibitem{Dalmaso}
G. Dal Maso,
\textit{Integral representation on $BV(\Omega)$ of $\Gamma$-limits of
variational integrals},
Manuscripta Math. {\bf 30} (1980), 387--416.

\bibitem{DCFV}
V. De Cicco, N. Fusco, A. Verde, \textit{On $L^1$-lower semicontinuity in $BV$},
J. Convex Analysis {\bf 12} (2005), 173--185.





 \bibitem{Edelstein} L. Edelstein-Keshet, \textit{Mathematical Models in Biology}, SIAM,
 2005.

\bibitem{FM} P. C. Fife, J. B. McLeod, \textit{The approach of solutions of nonlinear diffusion equations to travelling front solutions}, Arch. Ration. Mech. Anal. {\bf 65}, 335--361 (1977).

 \bibitem{Enguica}
 R. Enguica, A. Gavioli, L. S\'anchez, \textit{A class of singular first order differential equations with applications in reaction-diffusion}, Discrete and Continuous Dynamical Systems {\bf 33} (2013), 173--191.

 \bibitem{F} R.A. Fisher, \textit{The  wave of advance of advantageous genes}, Ann. Eugenics {\bf 7} (1937),  335--369.

\bibitem{GG} R. A. Gatenby,  E. T. Gawlinski, \textit{A Reaction-Diffusion Model of Cancer Invasion}, Cancer Research {\bf 56} (1996), 5745--5753.




\bibitem{Jo} C.K.R.T. Jones, R. Gardner,  T. Kapitula, \textit{Stability of travelling waves for non-convex scalar viscous conservation laws}, Comm. Pure Appl. Math. {\bf 46}, 505--526 (1993).

\bibitem{HR} K.P. Hadeler, F. Rothe, \textit{Travelling fronts in nonlinear diffusion equations}, J. Math. Biol. {\bf 2} (1975),  251--263.

\bibitem{Hartman} P. Hartman, \textit{ Ordinary Differential Equations}, John Wiley \& sons, New York, 1964.



 \bibitem{KPP} A.N. Kolmogorov, I.G. Petrovsky, N.S. Piskunov,  \textit{\'Etude de l'equation de la diffusion avec croissance
de la quantit\'e de mati\'ere et son application \'a un probl\'eme biologique}, Bulletin Universit\'e dEtat\'a
Moscou (Bjul. Moskowskogo Gos. Univ.), S\'erie internationale A 1 (1937), 1--26. See English
translation in: Dynamics of curved fronts, P. Pelc\'e Ed., Academic Press, 1988, 105--130.

 \bibitem{KM}  S. Kondo,   T. Miura,    \textit{Reaction-Diffusion model as a framework for understanding biological pattern formation},  { Science} {\bf   329}  (2010), 1616--1620.

 \bibitem{Kruzhkov}
S.N. Kruzhkov, \textit{First order quasilinear equations in several independent
variables},
Math. USSR-Sb. {\bf 10} (1970), 217--243.

\bibitem{kurganov1997effects} A. Kurganov and P. Rosenau, \textit{Effects of a saturating dissipation in Burgers-type equations},
Communications on pure and applied mathematics {\bf 50}  (1997), 753--771.


\bibitem{kurganov1998burgers} A. Kurganov, D. Levy and P. Rosenau, \textit{On Burgers-type equations with nonmonotonic dissipative fluxes},
Communications on pure and applied mathematics {\bf 51}  (1998), 443--473.


\bibitem{Ros-Non} A. Kurganov and P. Rosenau, \textit{On reaction processes with saturating diffusion},
Nonlinearity {\bf 19}  (2006), 171--193.



\bibitem{MS} A.J. Majda, P.E. Souganidis, \textit{Flame fronts in a turbulent combustion model with fractal velocity
fields}, Comm. Pure Appl. Math. {\bf 51} (1998),  1337--1348.

\bibitem{Marquina} A. Marquina, Diffusion front capturing schemes for a class of FokkerPlanck equations: Ap-
plication to the relativistic heat equation, Journal of Computational Physics 229 (2010),
2659-2674.

\bibitem{Mc-Pu}
{R. McCann, M. Puel}, \textit{Constructing a relativistic heat flow by transport time steps} ,  Ann. Inst. H. Poincar\'e Anal. Non Lineaire {\bf 26} (2009), 2539--2580.


\bibitem{Mein-Shel}  H. Meinhardt, P. Prusinkiewicz, D. R. Fowler,  \textit{The algorithmic beauty of sea shells}, Springer-Verlag, 1998.

 \bibitem{Murray}
 J. D. Murray, \textit{ Mathematical Biology}, Springer--Verlag, 1996.

\bibitem{Mu} C. Mueller, L. Mytnik, J. Quastel,   \textit{Effect of noise on front propagation in reaction-diffusion equations of KPP type}, Inv. Math. {\bf 184}, 405--453 (2011).


 \bibitem{Newman80}
 W.I. Newman, \textit{Some Exact Solutions to a Non-linear Diffusion
Problem in Population Genetics and Combustion},  J. Theor. Biol {\bf 85}, (1980)  325--334.


\bibitem{Sagan}
W.I. Newman, C. Sagan, \textit{Galactic Civilizations: Populations Dynamics and Interstellar Diffusion}, Icarus {\bf 46} (1981), 293--327.

\bibitem{PRE}
W. Ngamsaad, K. Khomphurngson, \textit{Self-similar solutions to a density-dependent reaction-diffusion model}, Physical Review E {\bf 85} (2012), 066120.

\bibitem{PV} A. de Pablo, J.L. V\'azquez, \textit{Travelling waves and finite propagation in a reaction-diffusion equation}, J. Differential Equations, {\bf 93} (1991),  19--61.


\bibitem{PQRV2} A. de Pablo, F. Quir\'os, A. Rodr\' iguez and J.L. V\'azquez, \textit{A general fractional porous medium equation},    Comm. Pure Applied Mathematics, to appear. arXiv:1104.0306v1 [math.AP], April 2011.

 \bibitem{Rosenau2}
{ P. Rosenau},
\textit{Tempered Diffusion: A Transport Process with Propagating Front and Inertial Delay},
 Phys. Review A {\bf 46} (1992), 7371--7374.

\bibitem{rosenau2002reaction} P. Rosenau,  \textit{Reaction and concentration dependent diffusion model},
Physical review letters {\bf 88} (2002), 194501.



 \bibitem{SG}
F. S\'anchez-Gardu\~no, P. K. Maini, \textit{Existence and uniqueness of a sharp traveling wave in degenerate non-linear diffusion Fisher-KPP equations}, J. Math. Biol. {\bf 33}  (1994), 163--192.

 \bibitem{SG2}
F. S\'anchez-Gardu\~no, P. K. Maini, \textit{Traveling wave phenomena in Some Degenerate Reaction-Diffusion Equations}, J. Diff. Eq. {\bf 177}  (1995), 281--319.

\bibitem{SG3}
F. S\'anchez-Gardu\~no, P. K. Maini, M.E. Kappos, \textit{A shooting argument approach to a sharp-type solution for nonlinear degenerate Fisher-KPP equations}, IMA Journal of Applied Mathematics {\bf 57}  (1996), 211--221.

\bibitem{VA06} {J.L. V\'azquez}, \textit{The Porous Medium Equation.
Mathematical Theory}, Oxford Univ. Press,  (2006).


\bibitem{VGRaS} M. Verbeni, O. S\'anchez, E. Mollica, I. Siegl-Cachedenier, A. Carleton, I. Guerrero, A. Ruiz i Altaba, J. Soler, \textit{Modeling morphogenetic action  through flux-limited spreading}, preprint.

\end{thebibliography}

\end{document}